\newcommand{\C}{\mathcal{C}}
\newcommand{\F}{\mathcal{F}}
\newcommand{\Hcal}{\mathcal{H}}
\newcommand{\I}{\mathcal{I}}
\newcommand{\m}{{\mathfrak m}}
\newcommand{\p}{{\mathfrak p}}
\newcommand{\Q}{\mathbb{Q}}
\newcommand{\R}{\mathbb{R}}
\newcommand{\W}{\mathcal{W}}
\newcommand{\Z}{{{\mathbb{Z}}}}
\newcommand{\Zh}{{{\mathbb{Z}[\nicefrac{1}{2}]}}}
\newcommand{\ie}{{i.e.\,}}
\newcommand{\cf}{{c.f.\,}}
\newcommand{\sign}{{\rm sign\,}}
\newcommand{\sgn}{{\rm sgn\,}}
\newcommand{\isign}{{\rm sign^i\,}}
\newcommand{\shfsign}{{\mathcal{S}\textup{ign}}\,}
\newcommand{\sper}{{\rm Sper\,}}
\newcommand{\spec}{{\rm Spec\,}}
\newcommand{\supp}{{\rm Supp\,}}
\newcommand{\vcd}{{\rm vcd_2\,}}
\newcommand{\cd}{{\rm cd_2\,}}
\newcommand{\str}{{\rm st_r\,}}
\DeclareSymbolFont{bbold}{U}{bbold}{m}{n}
\DeclareSymbolFontAlphabet{\mathbbold}{bbold}
\newcommand{\ind}{{\mathbbold{1}}}
\DeclarePairedDelimiter\floor{\lfloor}{\rfloor}
\newtheorem{theorem}[equation]{Theorem}
\newtheorem{lemma}[equation]{Lemma}
\newtheorem{definition}[equation]{Definition}
\newtheorem{corollary}[equation]{Corollary}
\newtheorem{proposition}[equation]{Proposition}
\numberwithin{equation}{section}
\begin{document}

\title{From the global signature to higher signatures}
\thanks{I wish to thank Raman Parimala for bringing to my attention Max Karoubi's question and for suggesting to use the global signature to try to prove it.
}


\author{Jeremy A. Jacobson}


\address{Department of Mathematics and Computer Science, Emory University, 400 Dowman Drive NE W401, Atlanta, GA, 30322, USA.}
        
              \email{jeremy.a.jacobson@emory.edu}           

\begin{abstract}
Let $X$ be an algebraic variety over the field of real numbers $\R$. We use the signature of a quadratic form to produce ``higher'' global signatures relating the derived Witt groups of $X$ to the singular cohomology of the real points $X(\R)$ with integer coefficients. We also study the global signature ring homomorphism and use the powers of the fundamental ideal in the Witt ring to prove an integral version of a theorem of Raman Parimala and Jean Colliot-Thelene on the mod 2 signature. Furthermore, we obtain an Atiyah-Hirzebruch spectral sequence for the derived Witt groups of $X$ with 2 inverted. Using this spectral sequence, we provide a bound on the ranks of the derived Witt groups of $X$ in terms of the Betti numbers of $X(\R)$. We apply our results to answer a question of Max Karoubi on boundedness of torsion in the Witt group of $X$. Throughout the article, the results are proved for a wide class of schemes over an arbitrary base field of characteristic different from 2 using real cohomology in place of singular cohomology. 
\end{abstract}

\maketitle

\section{Introduction}
\label{intro}
Let $X$ be a smooth, geometrically connected, quasi-projective variety over the field $\R$ of real numbers. Let $s$ denote the cardinality of the set of connected components of the real points $X(\R)$ for the Euclidean topology, and $d$ the Krull dimension of $X$. 

For any integer $n\geq0$, let $\Hcal^n$ denote the sheafification for the Zariski topology on $X$ of the presheaf $U\mapsto H^n_{\acute{e}t}(U,\Z/2\Z)$, where $H^n_{\acute{e}t}(U,\Z/2\Z)$ is the \'etale cohomology of $U$ with coefficients in the constant \'etale sheaf $\Z/2\Z$. 
In \cite{PCT}, Raman Parimala and Jean Colliot-Thelene introduced a mod 2 signature map $h_n$ and demonstrated that it induces an isomorphism of groups 
\begin{equation*}\label{eqn:PCT}
H^0_{Zar}(X,\Hcal^n)\stackrel{h_n}{\simeq} (\Z/2\Z)^s 
\end{equation*}
when $n\geq d+1$, where $H^0_{Zar}(X,\Hcal^n)$ denotes the sections of $\Hcal^n$. This generalized, from the case when $X$ is a curve and $n=2$, a classic theorem of Ernst Witt on the unramified Brauer group \cite{PCT}.

Let $H^p_{Zar}(X,\Hcal^n)$ denote the sheaf cohomology of the Zariski sheaf $\Hcal^n$ and let $H^p_{sing}(X(\R),\Z/2\Z)$ denote the singular cohomology with $\Z/2\Z$-coefficients. Their result was extended (\cite[(19.5.1)]{S94}, \cf J. Colliot-Thelene, Vortrag, Oberwolfach, 15 June 1990) to an isomorphism of cohomology groups for all $p\geq 0$
\begin{equation*}\label{eqn:S94}
H^p_{Zar}(X,\Hcal^n)\stackrel{h_n}{\simeq} H^p_{sing}(X(\R),\Z/2\Z)
\end{equation*}
when $n\geq d+1$. Claus Scheiderer later demonstrated that these isomorphisms hold for a very general class of schemes by using real cohomology in place of singular cohomology \cite[Corollary 19.5.1]{S94}.

One cannot obtain an integral version of the isomorphisms above by replacing everywhere $\Z/2\Z$ with $\Z$. When $n>d$ the \'etale cohomology groups $H^n_{\acute{e}t}(U,\Z)$ are torsion for any open subscheme $U$ of $X$ \cite[Corollary 7.23.3]{S94}. The sheafification $\Hcal^n(\Z)$ of $U\mapsto H^n_{\acute{e}t}(U,\Z)$ will produce a sheaf with $H^0_{Zar}(X,\Hcal^n(\Z))$ torsion, yet $\Z^s$ is torsion free.

We obtain integral versions of the above isomorphisms by using the Witt ring $W(X)$ of symmetric bilinear forms over $X$ (see \cite{KnebuschQueens} for an introduction) in place of \'etale cohomology. Let $\W$ denote the Zariski sheaf on $X$ associated to the presheaf $U\mapsto W(U)$. The sheaf $\W$ has a filtration by subsheaves $\I^n$ (see Section \ref{subsect:WittSheafPowers} for the definition of $\I^n$). The following theorem is a part of Theorem \ref{thm:B} in Section \ref{subsect:GlobalSignFundamentalIdeal}.
\begin{theorem}
If $X$ is an integral separated scheme of Krull dimension $d$ that is smooth and of finite type over $\R$, then there is an isomorphism of cohomology groups for all $p\geq 0$
 $$H^p_{Zar}(X,\I^n)\stackrel{\shfsign}{\simeq} H^p_{sing}(X(\R),2^n\Z)\stackrel{2^{-n}}{\simeq}H^p_{sing}(X(\R), \Z)$$ when $n\geq d+1$.
\end{theorem}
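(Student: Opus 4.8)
The plan is to identify both sides at the level of Zariski sheaves by means of a sheaf-theoretic signature, and then pass to cohomology using Scheiderer's comparison theorems. The second displayed isomorphism is formal: multiplication by $2^{-n}$ is an isomorphism of constant sheaves $2^n\Z\xrightarrow{\sim}\Z$ on $X(\R)$, so the content lies in the first. Let $\nu\colon\sper X\to X$ be the support map of the real spectrum. Sheafifying over open $U\subseteq X$ the total signature, which sends $W(U)$ into the ring of continuous $\Z$-valued functions on $\sper U$, one obtains a morphism of Zariski sheaves $\W\to\nu_*\Z$; because an $n$-fold Pfister form has signature $0$ or $2^n$ at every ordering, this restricts to a morphism $\shfsign\colon\I^n\to\nu_*(2^n\Z)$. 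The assertion to prove is that $\shfsign$ is an isomorphism of sheaves once $n\ge d+1$; granting that, the theorem follows by a short computation.

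To establish the sheaf isomorphism I would reduce to a statement over fields, paralleling the way the mod $2$ isomorphism $\Hcal^n\cong\nu_*(\Z/2)$ recalled in the introduction reduces to a statement about residue fields. Since $X$ is smooth over a field, $\I^n$ satisfies purity and the Gersten conjecture, so its stalk at a point $x$ is $\I^n(\mathcal{O}_{X,x})$, equal to the intersection inside $I^n(\R(X))$ of the groups $I^n((\mathcal{O}_{X,x})_{\mathfrak p})$ over the height-one primes $\mathfrak p$, and an analogous description applies to $\nu_*(2^n\Z)$. Checking $\shfsign$ on stalks thus reduces to the following field statement, applied to $F=\R(X)$ and to the function fields of its subvarieties — all of which satisfy $\vcd F=\cd F(\sqrt{-1})\le d$: \emph{if $\vcd F\le m-1$, then the total signature $I^m(F)\to H^0(\sper F,2^m\Z)$ is an isomorphism}. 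Here injectivity is the torsion-freeness of $I^m(F)$ in that range: by Pfister's local--global principle the kernel of the total signature on $W(F)$ is the torsion subgroup, and if $x\in I^m(F)$ is torsion then, using that for every $k\ge m$ the mod $2$ signature induces an isomorphism $I^k(F)/I^{k+1}(F)\cong H^0(\sper F,\Z/2)$ (the affirmed Milnor conjecture together with the Arason--Elman--Jacob description of $H^k(F,\Z/2)$ in the range $k>\vcd F$), the vanishing of the signature of $x$ forces $x\in\bigcap_k I^k(F)$, which is $0$ by the Arason--Pfister Hauptsatz. Surjectivity is a representation statement: $F$ has finite reduced stability index, bounded in terms of $\vcd F$, and Br\"ocker's representation theorem for the powers of the fundamental ideal then gives that in the range $m\ge\vcd F+1$ every continuous $2^m\Z$-valued function on $\sper F$ is the signature of a form in $I^m(F)$. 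I expect this reduction — in particular matching the residue maps, the twists by line bundles, and the powers of $2$ occurring in the Gersten complexes of $\I^n$ and of $\nu_*(2^n\Z)$ — to carry essentially all the weight of the proof, with the field statement above as its core.

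Granting that $\shfsign\colon\I^n\xrightarrow{\sim}\nu_*(2^n\Z)$ for $n\ge d+1$, the theorem follows exactly as in the mod $2$ case. First $H^p_{Zar}(X,\I^n)\cong H^p_{Zar}(X,\nu_*(2^n\Z))$. Next, the higher direct images of the locally constant sheaf $2^n\Z$ under $\nu$ vanish, so the Leray spectral sequence degenerates and $H^p_{Zar}(X,\nu_*(2^n\Z))\cong H^p(\sper X,2^n\Z)$. Then Scheiderer's comparison of the cohomology of the real spectrum with semialgebraic cohomology, which since $X$ is of finite type over $\R$ agrees with singular cohomology of $X(\R)$, gives $H^p(\sper X,2^n\Z)\cong H^p_{sing}(X(\R),2^n\Z)$. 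Finally $2^n\Z\cong\Z$ gives $H^p_{sing}(X(\R),2^n\Z)\cong H^p_{sing}(X(\R),\Z)$. Composing, $H^p_{Zar}(X,\I^n)\cong H^p_{sing}(X(\R),\Z)$ for all $p\ge 0$ whenever $n\ge d+1$, all identifications being induced by $\shfsign$ and the evident maps, which also gives the naturality implicit in the statement. (One could instead try to bootstrap downward from large $n$ using the short exact sequences $0\to\I^{n+1}\to\I^n\to\Hcal^n\to 0$ coming from the Milnor conjecture, but since $n$ ranges over an unbounded set this still needs the field-level surjectivity above, so the direct argument via the signature is the more economical route.)
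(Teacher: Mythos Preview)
Your proposal is correct and follows essentially the same route as the paper: construct the sheaf morphism $\shfsign\colon\I^n\to\supp_*2^n\Z$, reduce to the field-level isomorphism $I^n(F)\stackrel{\sim}{\to}H^0(\sper F,2^n\Z)$ for $\vcd F\le n-1$ (the paper packages this as Lemma~\ref{lem:EquivalentI^nProperties}, citing Arason--Knebusch and Br\"ocker just as you do), and then pass to cohomology via Scheiderer's comparison. The only cosmetic difference is that the paper checks the sheaf isomorphism on sections over every open $U$, identifying both $\I^n(U)$ and $\supp_*2^n\Z(U)$ as kernels of residue maps out of $I^n(K)$ and $H^0(\sper K,2^n\Z)$ respectively and verifying that the signature intertwines these residues (Proposition~\ref{prop:FundamentalIdealResidueCommute}), whereas you phrase the same reduction stalkwise; no twists by line bundles are needed.
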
  
When $X$ is an affine smooth $\R$-variety with trivial canonical sheaf, the case $p=d$ of this theorem was proved by Jean Fasel  \cite[Proposition 5.1]{FaselOrbitSets}. 

We obtain this theorem, as well as the other main results of the article, by studying a morphism $\shfsign$ of sheaves on $X$ that is induced by the global signature.  
Recall that in the situation of a real variety $X$, the global signature defines a ring homomorphism
\begin{equation*}
\sign: W(X)\rightarrow H^0_{sing}(X(\R),\Z)
\end{equation*}
mapping from the Witt ring $W(X)$ of symmetric bilinear forms to the singular cohomology with integer coefficients in degree zero. The most important result on the global signature was proved by Louis Mah\'e, who showed that the cokernel is 2-primary torsion (in fact, he proved this for any affine scheme) \cite[Corollary 4.10]{Mahe}.

Here we extend the global signature to ``higher signatures''
\begin{equation*}
\isign:W^i(X)\rightarrow H^i_{sing}(X(\R),\Z)
\end{equation*}
for $0\leq i\leq 3$, mapping from the derived Witt groups of $X$ to the singular cohomology of $X(\R)$. See Section \ref{subsec:DefinitionHigherSignature} for the definition. Recall that the derived Witt groups are $4$-periodic, $W^i(X)\simeq W^{i+4}(X)$ and form a representable cohomology theory for varieties \cite{hornbostel}. In degree zero, $W^0(X)\simeq W(X)$ is the Witt ring of symmetric bilinear forms over $X$.

Up to 2-primary torsion, we express the kernel of the global and higher signatures in terms of singular cohomology by realizing them as edge maps in an Atiyah-Hirzebruch type spectral sequence for the Witt groups.
\begin{theorem}
If $X$ is a separated scheme of finite Krull dimension that is smooth and of finite type over $\R$, then there is a strongly convergent spectral sequence
\begin{equation*}
\textup{E}_2^{p,q}=\begin{cases} H^p_{sing}(X(\R),\Zh)& \textup{if } q\equiv 0 \textup{ mod } 4\\
						0&	\textup{otherwise}
			\end{cases} 
			\Longrightarrow W^{p+q}(X)[\nicefrac{1}{2}]
\end{equation*}
abutting to the derived Witt groups with 2 inverted. For $r\geq1$, the differentials $d_r$ are of bidegree $(r,r-1)$. For integers $0\leq i \leq 3$, the global and higher signatures $\isign$, after inverting 2, are edge maps in this spectral sequence.
\end{theorem}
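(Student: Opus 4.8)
The plan is to obtain the spectral sequence as the Gersten--Witt (coniveau) spectral sequence of Balmer--Walter for the derived Witt groups, localized at $2$; this is the algebraic analogue of the Atiyah--Hirzebruch spectral sequence, and it is also the Zariski-descent spectral sequence for the presheaf of Witt-theory spectra with $2$ inverted. For $X$ regular of finite Krull dimension $d$ --- which our $X$ is, being smooth of finite type over $\R$ --- Balmer--Walter's spectral sequence has $E_1$-page built from the (twisted) Witt groups of the residue fields $\kappa(x)$, $x\in X^{(p)}$, placed in the appropriate degrees, it converges strongly to $W^{p+q}(X)$, its coniveau filtration on each $W^{n}(X)$ has length at most $d+1$, and its differentials $d_r$ have the stated bidegree. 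It is moreover $4$-periodic in $q$, inheriting the $4$-periodicity of the derived Witt groups.

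First I would invert $2$; this is exact, so the spectral sequence remains strongly convergent, now abutting to $W^{p+q}(X)[\nicefrac{1}{2}]$. For any field $F$ and line $M$ one has $W^{n}(F,M)[\nicefrac{1}{2}]=0$ unless $n\equiv0\bmod4$, while $W^{4m}(F,M)[\nicefrac{1}{2}]\simeq W(F,M)[\nicefrac{1}{2}]$. Hence $E_1^{p,q}[\nicefrac{1}{2}]$ is concentrated in the rows $q\equiv0\bmod4$ --- the location of the nonzero rows being pinned down by the $4$-periodicity together with the fact that the ``$q=0$'' row involves $W(\kappa(x))$ --- and in each such row it is exactly the Gersten--Witt complex of $X$. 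By the Gersten conjecture for Witt groups on smooth varieties over a field of characteristic different from $2$, this complex is a flasque resolution of the Witt sheaf $\W$, so $E_2^{p,q}[\nicefrac{1}{2}]=H^{p}_{Zar}(X,\W)[\nicefrac{1}{2}]$ when $q\equiv0\bmod4$ and $0$ otherwise. I would then invoke the comparison isomorphism established earlier in the paper --- coming from Scheiderer's theorems on the real spectrum together with the morphism $\shfsign$, and using that $X$ is of finite type over $\R$ to pass from real-\'etale to singular cohomology --- namely $H^{p}_{Zar}(X,\W)[\nicefrac{1}{2}]\simeq H^{p}_{sing}(X(\R),\Zh)$, naturally in $X$. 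This puts the $E_2$-page in the asserted form.

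It remains to recognize the global and higher signatures as edge maps. For $0\le i\le 3$ there is no nonzero $E_2^{p,q}$ with $p+q=i$ and $0\le p<i$, since that would require $1\le i-p\le 3$ to be divisible by $4$; hence $F^{0}W^{i}(X)[\nicefrac{1}{2}]=F^{i}W^{i}(X)[\nicefrac{1}{2}]$. Also, no differential enters the column $p=i$ on a surviving row, so $E_\infty^{i,0}$ injects into $E_2^{i,0}$, and one obtains a canonical edge homomorphism
\[
W^{i}(X)[\nicefrac{1}{2}]\;=\;F^{i}W^{i}(X)[\nicefrac{1}{2}]\;\twoheadrightarrow\;E_\infty^{i,0}\;\hookrightarrow\;E_2^{i,0}\;=\;H^{i}_{sing}(X(\R),\Zh).
\]
Comparing the definition of $\isign$ from Section~\ref{subsec:DefinitionHigherSignature} with the composite of the coniveau edge map $W^{i}(X)\to H^{i}_{Zar}(X,\W)$ and the map induced by $\shfsign$, one checks that $\isign\otimes\Zh$ is precisely this edge homomorphism; for $i=0$ it recovers the global signature $\sign$, and Mah\'e's theorem then shows that after inverting $2$ the edge map is surjective.

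The step I expect to be the main obstacle is this last one: verifying that the coniveau edge map coincides \emph{on the nose} with the separately defined $\isign$, not merely up to a unit of $\Zh$ or a sign. This forces one to track the normalizations hidden in $\shfsign$ and in the d\'evissage isomorphisms of the Gersten--Witt complex, and to use in an essential way that the identification of the $E_2$-page with $H^{p}_{sing}(X(\R),\Zh)$ is natural in $X$. By contrast, strong convergence --- even though the spectral sequence has infinitely many nonzero rows --- is immediate, since in each total degree only the finitely many $E_2^{p,q}$ with $0\le p\le d$ contribute.
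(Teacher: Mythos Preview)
Your proposal is correct and follows essentially the same route as the paper: take the Balmer--Walter coniveau spectral sequence, invert $2$, use the Gersten conjecture to identify the $E_2$-page with $H^p_{Zar}(X,\W[\nicefrac{1}{2}])$, and then apply Theorem~\ref{thm:0} (together with the comparison between real and singular cohomology over $\R$) to rewrite this as $H^p_{sing}(X(\R),\Zh)$.

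One remark: your anticipated ``main obstacle'' is not an obstacle at all. By Definition~\ref{def:HigherSign}, $\isign$ is \emph{defined} to be the composite of the coniveau edge map $W^i(X)\twoheadrightarrow E_\infty^{i,0}\hookrightarrow H^i_{Zar}(X,\W)$ with the map induced by $\shfsign$; since inverting $2$ is exact and commutes with forming the spectral sequence, $\isign\otimes\Zh$ is the edge map of the localized spectral sequence on the nose, with no normalizations to track. The surjectivity for $i=0$ via Mah\'e is not needed for the theorem as stated (the paper records it separately as Proposition~\ref{prop:Differentials}).
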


When $d\leq 3$ the spectral sequence collapses on the $\textup{E}_2$-page, in which case we have the following corollary showing that the kernel and cokernel of the higher signatures are 2-primary torsion.
\begin{corollary}
If $d\leq 3$, then the higher signature maps induce isomorphisms of groups after inverting the integer $2$ 
$$W^0(X)[\nicefrac{1}{2}] \stackrel{\textup{sign}^0}{\simeq} H^0_{sing}(X(\R),\Zh)$$
$$W^1(X)[\nicefrac{1}{2}] \stackrel{\textup{sign}^1}{\simeq} H^1_{sing}(X(\R),\Zh)$$
$$W^2(X)[\nicefrac{1}{2}] \stackrel{\textup{sign}^2}{\simeq} H^2_{sing}(X(\R),\Zh)$$
$$W^3(X)[\nicefrac{1}{2}] \stackrel{\textup{sign}^3}{\simeq} H^3_{sing}(X(\R),\Zh)$$
\end{corollary}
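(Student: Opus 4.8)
The plan is to derive the corollary directly from the Atiyah–Hirzebruch-type spectral sequence of the preceding theorem, using the hypothesis $d \le 3$ to show that the spectral sequence degenerates. First I would observe that the $\textup{E}_2$-page is concentrated in the rows $q \equiv 0 \bmod 4$, and that since $X$ has Krull dimension $d \le 3$, the cohomological dimension bound $H^p_{sing}(X(\R),\Zh) = 0$ for $p > d$ (the real points $X(\R)$ form a real analytic/semialgebraic set of dimension $\le d$) forces $\textup{E}_2^{p,q} = 0$ unless $0 \le p \le 3$ and $q \equiv 0 \bmod 4$. For $r \ge 2$ the differential $d_r$ has bidegree $(r, r-1)$, so it moves $p$ by $r \ge 2$ and $q$ by $r - 1 \ge 1$; in particular it cannot preserve the condition $q \equiv 0 \bmod 4$ unless $r - 1$ is a positive multiple of $4$, i.e. $r \ge 5$, but then the source or target has $p$-coordinate outside $[0,3]$, hence vanishes. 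Therefore every $d_r$ with $r \ge 2$ is zero, the spectral sequence collapses at $\textup{E}_2$, and $\textup{E}_\infty^{p,q} = \textup{E}_2^{p,q}$.

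Next I would extract the abutment. Strong convergence gives a finite filtration on $W^{p+q}(X)[\nicefrac12]$ whose associated graded pieces are the $\textup{E}_\infty^{p,q}$ with $p + q = n$ fixed. For a given total degree $n$ with $0 \le n \le 3$, the only nonzero contribution comes from $(p,q) = (n, 0)$, since any other lattice point on the antidiagonal $p + q = n$ either has $q \not\equiv 0 \bmod 4$ or has $p < 0$. Hence the filtration has a single nonzero graded piece and the edge map $W^n(X)[\nicefrac12] \to \textup{E}_\infty^{n,0} = \textup{E}_2^{n,0} = H^n_{sing}(X(\R),\Zh)$ is an isomorphism. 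By the last sentence of the preceding theorem, this edge map is precisely $\isign$ after inverting $2$ (for $0 \le i = n \le 3$), which yields the four displayed isomorphisms.

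The main obstacle — really the only point requiring care — is justifying the vanishing $H^p_{sing}(X(\R),\Zh) = 0$ for $p > d$ and checking that no $d_r$ can connect two surviving entries; both reduce to elementary bookkeeping once one notes that the rows are $4$-periodically spaced while the differentials shift the row index by $r - 1 < 4$ for all relevant $r$. One should also remark that the cohomological dimension statement is standard for the real points of a finite-type $\R$-scheme of dimension $d$ (it follows, e.g., from triangulability of real algebraic sets, or in the general setting from the corresponding bound on real cohomology cited earlier in the paper, \cite[Corollary 7.23.3]{S94} and its analogues). With that in hand the collapse and the reading-off of the edge maps are formal.
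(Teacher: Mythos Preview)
Your argument is correct and is exactly the approach the paper takes: the paper merely says the corollary ``immediately follows upon considering the structure of the $\textup{E}_2$-page,'' and you have carefully spelled out that structure and the resulting collapse. One small remark: the vanishing $\textup{E}_2^{p,q}=0$ for $p>d$ is already built into the coniveau construction (there are no points of codimension greater than $d$, so $\textup{E}_1^{p,*}=0$ for $p>d$), so you need not appeal to the topological cohomological dimension of $X(\R)$; and the citation \cite[Corollary 7.23.3]{S94} concerns torsion in \'etale cohomology rather than vanishing of real cohomology, so it is not the reference you want here.
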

As another corollary to this spectral sequence we obtain a bound on the ranks of the Witt groups in terms of the Betti numbers of $X(\R)$ (See Corollary \ref{cor:Betti}).

Our results have applications to studying torsion in the Witt ring. Recall that in \cite[Example and Remark]{Karoubi}, Max Karoubi gave examples of rings $A$ for which the Witt group of the ring has odd torsion. The rings are continuous real and complex-valued functions on lens spaces of dimension 5 and higher, where we mean dimension as a real manifold. We prove the following corollary to the spectral sequence and show that the bound on the dimension is sharp (See Corollary \ref{cor:4Manifold2Primary}). 
\begin{corollary}
If $d\leq 4$ and $X(\R)$ is compact, then torsion in $W(X)$ is 2-primary.
\end{corollary}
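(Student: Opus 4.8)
The plan is to extract the statement from the Atiyah--Hirzebruch spectral sequence for the derived Witt groups established above, together with the elementary observation that the kernel of the localization homomorphism $W(X)\to W(X)[\nicefrac{1}{2}]$ is precisely the $2$-primary torsion subgroup of $W(X)$. In particular, a torsion element of $W(X)$ whose order is divisible by an odd prime has nonzero (torsion) image in $W(X)[\nicefrac{1}{2}]$, so it is enough to prove that $W^0(X)[\nicefrac{1}{2}]=W(X)[\nicefrac{1}{2}]$ is torsion free.

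First I would show that the hypothesis $d\leq 4$ forces the spectral sequence to collapse at the $\textup{E}_2$-page. Since $X$ is smooth of finite type over $\R$, the real locus $X(\R)$ is a smooth manifold with $\dim X(\R)\leq d\leq 4$, so $H^p_{sing}(X(\R),\Zh)=0$ for all $p\geq 5$; combined with the fact that the $\textup{E}_2$-page is concentrated in the rows $q\equiv 0\bmod 4$, this annihilates every differential. Indeed, if $d_r\colon\textup{E}_r^{p,q}\to\textup{E}_r^{p+r,q-r+1}$ had nonzero source and target, then (as each $\textup{E}_r$ is a subquotient of $\textup{E}_2$) one would need $q\equiv 0$ and $q-r+1\equiv 0$ modulo $4$, hence $r\equiv 1\bmod 4$; for $r\geq 2$ this forces $r\geq 5$, so the target would sit in cohomological degree $p+r\geq 5$ and therefore vanish, a contradiction. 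Thus $\textup{E}_\infty=\textup{E}_2$. (For $d\leq 3$ this was already recorded above; the case $d=4$ is the new ingredient.)

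Next I would read off the total degree $0$ part of the abutment. By strong convergence, $W^0(X)[\nicefrac{1}{2}]$ carries a finite filtration with associated graded $\bigoplus_{p+q=0}\textup{E}_2^{p,q}$, and the constraints $0\leq p\leq d\leq 4$ and $q\equiv 0\bmod 4$ leave only the two terms $\textup{E}_2^{0,0}=H^0_{sing}(X(\R),\Zh)$ and $\textup{E}_2^{4,-4}=H^4_{sing}(X(\R),\Zh)$, the second being nonzero only when $d=4$. Since $X(\R)$ is compact it has the homotopy type of a finite CW complex, so its cohomology groups are finitely generated; in particular $H^0_{sing}(X(\R),\Zh)\cong\Zh^{s}$ is free, where $s$ is the number of connected components of $X(\R)$, and, $\Zh$ being flat over $\Z$, one has $H^4_{sing}(X(\R),\Zh)\cong H^4_{sing}(X(\R),\Z)\otimes_{\Z}\Zh$. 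As $X(\R)$ is a closed manifold of dimension at most $4$, the torsion subgroup of $H^4_{sing}(X(\R),\Z)$ consists only of the $\Z/2\Z$'s contributed by its non-orientable $4$-dimensional components, which are killed by $\otimes_{\Z}\Zh$; hence $H^4_{sing}(X(\R),\Zh)$ is free over $\Zh$ as well. Since an extension of a torsion free abelian group by a torsion free abelian group is again torsion free, $W^0(X)[\nicefrac{1}{2}]$ is torsion free, and the reduction in the first paragraph completes the proof.

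The crux is the collapse step: everything rests on combining the sparseness of the $\textup{E}_2$-page (nonzero only in rows $q\equiv 0\bmod 4$) with the vanishing of $H^p_{sing}(X(\R),\Zh)$ for $p>\dim X(\R)$, which is exactly where the numerical bound $d\leq 4$ enters; once the relevant cohomology groups of $X(\R)$ are known to be torsion free over $\Zh$, no differential can inject odd torsion into the abutment. Sharpness of the bound — odd torsion does appear in $W(X)$ once $d\geq 5$ — is exhibited by Karoubi's lens space examples.
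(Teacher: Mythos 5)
Your proof is correct and follows essentially the same route as the paper: you recover the exact sequence $0\to H^4_{sing}(X(\R),\Zh)\to W(X)[\nicefrac{1}{2}]\to H^0_{sing}(X(\R),\Zh)$ (which the paper has already recorded as Corollary \ref{cor:OddTorsionDerived}), observe via compactness and Poincar\'e duality that the two outer groups are $\Zh$-torsion free, and conclude. The only differences are expository: you spell out the reduction that $\ker\bigl(W(X)\to W(X)[\nicefrac{1}{2}]\bigr)$ is the $2$-primary torsion and you give the full $\textup{E}_2=\textup{E}_\infty$ collapse, whereas the paper only needs (and only checks) that the differentials out of the $(4,-4)$ position vanish.
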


Also, M. Karoubi has asked if the order of the torsion elements in the Witt group of a real variety is bounded. He and Charles Weibel have a work in progress in which they answer this question affirmatively with precise bounds. We provide an answer to this question by giving sufficient conditions for torsion to be bounded over a general base field $F$. For the proof of the following theorem see Section \ref{sec:BoundingTorsion}.  
\begin{theorem}
Let $F$ be a field of characteristic different from 2. Let $X$ be an excellent, integral, noetherian, regular, separated $F$-scheme of finite Krull dimension and of finite virtual cohomological 2-dimension. If the real cohomology groups $H^p(X_r, \Z)$ are finitely generated, then the torsion in the Witt groups $W^i(X)$ is of bounded order for all integers $i\in \Z$.
\end{theorem}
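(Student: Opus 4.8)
The plan is to combine the fundamental-ideal filtration $\W=\I^0\supseteq\I^1\supseteq\cdots$ of the Witt sheaf with the Gersten--Witt (coniveau) spectral sequence for the derived Witt groups. Since $X$ is an $F$-scheme it is equicharacteristic, so the Gersten--Witt spectral sequence (available for regular schemes over a field) gives a strongly convergent spectral sequence $E_2^{p,q}\Rightarrow W^{p+q}(X)$ with $E_2^{p,q}=H^p_{Zar}(X,\W)$ for $q\equiv0\bmod4$ and $E_2^{p,q}=0$ otherwise (the vanishing of the remaining rows reflecting that $W^q(\kappa)=0$ for a field $\kappa$ and $q\not\equiv0\bmod4$). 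Grothendieck vanishing gives $H^p_{Zar}(X,\W)=0$ for $p>\dim X$, so in each total degree only finitely many $E_2$-terms are nonzero and $W^i(X)$ carries a finite exhaustive filtration whose graded pieces are subquotients of the groups $H^p_{Zar}(X,\W)$. Since in a short exact sequence the torsion exponent of the middle term divides the product of the torsion exponents of the outer terms, it suffices to bound, uniformly in $p$: (a) the exponent of the torsion subgroup $H^p_{Zar}(X,\W)_{\mathrm{tors}}$; and then (b) the exponent of $(E_\infty^{p,q})_{\mathrm{tors}}$ in terms of it.

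For (a) the point is that $2=\langle1,1\rangle$ lies in $\I$, so every quotient sheaf $\I^n/\I^{n+1}$ is a sheaf of $\Z/2$-modules, and hence $H^p_{Zar}(X,\I^n/\I^{n+1})$ is a $\Z/2$-vector space, of exponent $\leq2$, for all $n$ and $p$ (it may be infinite-dimensional, but this does no harm). Feeding the short exact sequences $0\to\I^{n+1}\to\I^n\to\I^n/\I^{n+1}\to0$ into their long exact cohomology sequences, and noting that the image of the connecting map into $H^p_{Zar}(X,\I^{n+1})$ and the image of $H^p_{Zar}(X,\I^n)$ in $H^p_{Zar}(X,\I^n/\I^{n+1})$ both have exponent $\leq2$, one obtains $\exp\big(H^p_{Zar}(X,\I^n)_{\mathrm{tors}}\big)\leq2\,\exp\big(H^p_{Zar}(X,\I^{n+1})_{\mathrm{tors}}\big)$. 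By the real-cohomology form of Theorem~\ref{thm:B}, valid in the present generality, there is an integer $N$ depending only on $\vcd X$ such that $H^p_{Zar}(X,\I^n)\simeq H^p(X_r,\Z)$ for all $p$ and all $n\geq N$; by hypothesis $H^p(X_r,\Z)$ is finitely generated, and it vanishes for $p$ exceeding the covering dimension of $X_r$ (which is at most $\dim X$), so $E:=\sup_p\exp\big(H^p(X_r,\Z)_{\mathrm{tors}}\big)$ is finite. Iterating the displayed inequality from $n=N$ down to $n=0$ yields $\exp\big(H^p_{Zar}(X,\W)_{\mathrm{tors}}\big)\leq2^N E=:T$ for every $p$. (For $i=0$ one can finish without the spectral sequence: $2^N W(X)\subseteq I^N(X)$, and $I^N(X)$ injects into $H^0_{Zar}(X,\I^N)\simeq H^0(X_r,\Z)$, which is torsion-free, so $W(X)_{\mathrm{tors}}$ is annihilated by $2^N$.)

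For (b) the essential input is that the differentials are torsion-valued, \ie $d_r\otimes\Q=0$. Rationally the spectral sequence degenerates, so that $W^i(X)\otimes\Q\simeq\bigoplus_{n\equiv i\bmod4}H^n(X_r,\Q)$; this is standard, rationally the Witt groups being the real cohomology of $X_r$ (the rationalized Witt spectrum is a product of shifted Eilenberg--Mac Lane spectra, and after inverting $2$ the spectral sequence is the Atiyah--Hirzebruch spectral sequence established above). Using flatness of $\Q$, a short induction on the page $r$ shows that each $E_r^{p,q}$ is a subquotient $Z_r^{p,q}/B_r^{p,q}$ of $E_2^{p,q}=H^p_{Zar}(X,\W)$ in which $B_r^{p,q}$ is a \emph{torsion} subgroup: the base case $B_2^{p,q}=0$ is clear, and at each step $B_{r+1}^{p,q}/B_r^{p,q}\cong\mathrm{im}(d_r)$ is torsion because $d_r\otimes\Q=0$, while $B_r^{p,q}$ is torsion by the inductive hypothesis. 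Hence $B_\infty^{p,q}\subseteq(Z_\infty^{p,q})_{\mathrm{tors}}\subseteq H^p_{Zar}(X,\W)_{\mathrm{tors}}$, so $(E_\infty^{p,q})_{\mathrm{tors}}=(Z_\infty^{p,q})_{\mathrm{tors}}/B_\infty^{p,q}$ has exponent $\leq T$. Running up the finite filtration of $W^i(X)$ then gives $\exp\big(W^i(X)_{\mathrm{tors}}\big)\mid T^{\dim X+1}$, and the $4$-periodicity $W^i(X)\simeq W^{i+4}(X)$ extends this to all $i\in\Z$.

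I expect the main obstacle to be the sheaf-theoretic input in (a): the real-cohomology generalization of Theorem~\ref{thm:B} (stabilization of $H^p_{Zar}(X,\I^n)$ to $H^p(X_r,\Z)$ for all $p$ once $n$ is large) must be established for the stated class of schemes (excellent, regular, of finite $\vcd$, not assumed of finite type over a field), and the stabilization range $N$ must be controlled by $\vcd X$ alone. The remaining ingredients, namely the Gersten--Witt spectral sequence over equicharacteristic regular schemes, its rational collapse, and the torsion bookkeeping of the first and third paragraphs, are either already available or routine.
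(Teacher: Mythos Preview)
Your approach is correct in outline and reaches the same conclusion, but the paper organizes the argument differently and avoids one of your inputs. Both proofs use the coniveau spectral sequence with $E_2^{p,q}=H^p_{Zar}(X,\W)$ and the filtration of $\W$ by the $\I^n$, together with Theorem~\ref{thm:B} to identify $H^p_{Zar}(X,\I^n)$ with real cohomology for $n$ large. For step~(a) the paper uses the single short exact sequence $0\to\I^{s+1}\to\W\to\W/\I^{s+1}\to0$ rather than iterating through the graded pieces $\I^n/\I^{n+1}$; either works.

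The substantive difference is in how the bound is propagated through the spectral sequence. You track only the torsion exponent, and this forces you to invoke rational degeneration of the coniveau spectral sequence in~(b): without knowing that the boundaries $B_r$ are torsion, a quotient $Z/B$ of a group $Z$ with bounded torsion can acquire torsion of arbitrarily large order (e.g.\ $\Z/n\Z$ as a quotient of $\Z$). Rational collapse is plausible but is not established in the paper, and your justification via a splitting of the rationalized Witt spectrum is really a statement about the motivic/topological Atiyah--Hirzebruch spectral sequence rather than the coniveau one. The paper instead introduces the Serre subcategory $\mathcal{C}$ of abelian groups $G$ with bounded torsion \emph{and} with $G/G_{\mathrm{tors}}$ free of finite rank (Lemma~\ref{lem:Serre}); the finite-rank condition is precisely what controls torsion created in quotients. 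Once $H^p_{Zar}(X,\W)\in\mathcal{C}$ is checked (Theorem~\ref{thm:E}), membership in $\mathcal{C}$ passes automatically to subquotients and to extensions, hence to $E_\infty$ and to the abutment, with no appeal to rational collapse. Your route yields the explicit bound $T^{\dim X+1}$ at the cost of an extra hypothesis; the paper's route is more self-contained but gives only existence of a bound. If you wish to avoid the rational-degeneration input, add to~(a) the observation that $H^p_{Zar}(X,\W)\otimes\Q\simeq H^p(X_r,\Q)$ is finite-dimensional (immediate from the same long exact sequence, since $\W/\I^{s+1}$ is $2$-primary torsion), and then replace your~(b) by the Serre-subcategory stability argument.
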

For an algebraic variety over a real closed field $R$ the groups $H^p(X_r, \Z)$ are finitely generated (See Section \ref{subsubsect:RealCoh}).
\begin{corollary}
If $X$ is an integral, separated scheme that is smooth and of finite type over a real closed field $R$, then torsion is bounded in the Witt group $W(X)$. 
\end{corollary}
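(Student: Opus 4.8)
The plan is to obtain the corollary as an immediate consequence of the preceding theorem: everything reduces to checking that a smooth, finite type, integral, separated scheme $X$ over a real closed field $R$ meets the hypotheses of that theorem — namely that it is excellent, noetherian, regular, of finite Krull dimension, and of finite virtual cohomological $2$-dimension — and that the real cohomology groups $H^p(X_r,\Z)$ are finitely generated.

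First I would dispatch the purely algebraic hypotheses. Being of finite type over the field $R$, the scheme $X$ is noetherian and of finite Krull dimension; it is excellent because excellence passes to schemes of finite type over a field; and it is regular because it is smooth over $R$, which is perfect (real closed fields have characteristic zero). Integrality and separatedness are assumed outright. For the finiteness of $\vcd X$, note that since $R$ is real closed its absolute Galois group is $\Z/2\Z$, so $R(\sqrt{-1})$ is algebraically closed and $\vcd R = 0$; then the standard comparison estimate, bounding the virtual cohomological $2$-dimension of a variety by its Krull dimension plus that of the base field, gives $\vcd X \le \dim X + \vcd R = \dim X < \infty$.

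Next I would invoke the geometry of real closed fields for the remaining hypothesis. By the semialgebraic finiteness theory over a real closed field (triangulation of semialgebraic sets, in the form recalled in Section~\ref{subsubsect:RealCoh}), the real spectrum $X_r$ has the cohomology of a finite CW complex; in particular each $H^p(X_r,\Z)$ is a finitely generated abelian group, and these groups vanish for $p$ large.

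With all the hypotheses in place, the preceding theorem applies and shows that the torsion in $W^i(X)$ is of bounded order for every $i\in\Z$; specializing to $i=0$ and using $W^0(X)\simeq W(X)$ yields the statement. I do not expect a genuine obstacle here: the substantive content is carried entirely by the cited theorem together with the classical finiteness properties of semialgebraic sets over a real closed field. The only point deserving a careful word is the bound on $\vcd X$, which must be extracted from the real-\'etale and Galois-cohomological machinery rather than from a naive dimension count, but over a real closed base this is routine.
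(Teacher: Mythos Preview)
Your proposal is correct and matches the paper's approach exactly: the paper does not give a standalone proof of this corollary but simply remarks, just before stating it, that for an algebraic variety over a real closed field the groups $H^p(X_r,\Z)$ are finitely generated (Section~\ref{subsubsect:RealCoh}), and then deduces the corollary from the preceding theorem. Your write-up is in fact more explicit than the paper in verifying the remaining hypotheses (excellence, regularity, finite Krull dimension, finite $\vcd$), but the structure of the argument is identical.
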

Throughout the article, we use real cohomology in place of singular cohomology and work with a general class of schemes over an arbitrary base field of characteristic different from 2. All of the main results introduced above are proved in this generality throughout the article.

\section{The total signature}
\label{sec:signature}
The purpose of this section is to recall the definition of the signature following the text of Knebusch and Scheiderer on the subject of real algebra \cite{KnebuschScheiderer} and then recall facts about the signature which will be ``globalized'' to sheaves in Section \ref{sec:SignSheaves} and used throughout the paper.

\subsection{Orderings}
\label{subsect:ordering}
Let $F$ be a field. An ordering on $F$ is a subset $P\subset F$ satisfying the following:
\begin{enumerate}[(i)]
\item $P+P\subset P$, $PP\subset P$;
\item $P\cap (-P)=0$;
\item $P\cup -P=F$.
\end{enumerate}
The pair $(F,P)$ is called an ordered field. Given any such ordering $P$, one defines $a\leq b$ if $b-a\in P$. It follows from the axioms that if $F$ is nontrivial, then $1>0$.

Given an ordering $P$ on $F$, we define a function $\sgn_P:F^{*}\rightarrow\{1,-1\}$ by defining $\sgn_P(a)$ to be $1$ if $a\in P$ and $-1$ if $a\in -P$. Note that $\sgn_P(ab)=\sgn_P(a)\sgn_P(b)$ for $a,b \in F^{*}$.

\subsection{The real spectrum}
\label{subsect:Realspectrum}
We briefly recall some of the notation and definitions on the real spectrum. For the facts below see \cite[(0.4)]{S94}, \cite[\S 3, Definition 1]{KnebuschScheiderer}. The real spectrum of a ring $A$ is a topological space denoted by $\sper A$. As a set it consists of all pairs $\xi=(\p, P)$ with $\p \in \spec A$ and $P$ an ordering of the residue field $k(\p)$. For any point $\xi\in \sper A$, let $k(\xi)$ denote the real closure of the ordered field $k(\p)$ with respect to $P$. For $a\in A$, write $a(\xi)>0$ to indicate that the image of $a$ in $k(\xi)$ is positive. The sets of the form $D(a):=\{\xi \in \sper A: a(\xi)>0\}$, $a\in A$, form a subbasis of open sets for the topology on $\sper A$. The map $$\supp: \sper A\rightarrow \spec A$$ defined by $(\p, P) \mapsto \p$ is a continuous map of topological spaces called the \emph{support map}. 

The real spectrum of a scheme $X$ is the topological space $X_r$ formed by glueing the real spectra of its open affine subschemes. This does not depend on the open cover of $X$ that was chosen. Just as was defined for affine schemes, for any scheme $X$, there is a support map $\supp: X_r\rightarrow X$ which is a continuous map of topological spaces. 

\subsection{Definition of the total signature}
\label{subsect:sign}   
Let $F$ be a field. If $F$ has an ordering $P$, then any non-degenerate quadratic form $\phi$ over $F$ splits as an orthogonal sum $\phi\simeq \phi_{+}\perp \phi_{-}$, where the form $\phi_{+}$ is positive definite with respect to the ordering (for all $0\neq v$, $q(v)>0$ with respect to $P$) and the form $\phi_{-}$ is negative definite with respect to the ordering (\ie $-\phi_{-}$ is positive definite). The numbers $n_{+}:=\dim \phi_{+}$ and $n_{-}:=\dim \phi_{-}$ do not change under an isometry of $\phi$ \cite[Chapter 1, Section 2, Satz 2]{KnebuschScheiderer}. The integer $\sign_{P}([\phi]):=n_{+}-n_{-}$ is defined to be \emph{signature of $[\phi]$ with respect to $P$}. As the signature of the hyperbolic form is trivial, assigning an isometry class $[\phi]$ to its signature $\sign_{P}([\phi])$ defines a map 
\begin{equation*}
 \sign_{P}:W(F)\rightarrow \Z
\end{equation*}
which is a homomorphism of rings \cite[Chapter 1, Section 2, Satz 2]{KnebuschScheiderer}. 

Let $H^0(\sper F,\Z)$ (often written elsewhere $C(X_{F},\Z)$) denote the set of continuous integer valued functions. The \emph{total signature}
\begin{equation*}\label{eqn:Signature}
\sign:W(F)\rightarrow H^0(\sper F,\Z)
\end{equation*}
assigns an isometry class $[\phi]$ to the continuous function $P\mapsto \sign_{P}([\phi])$. It is a ring homomorphism since $\sign_{P}$ is. If $F$ has no ordering, then $\sign$ is trivial. 

\begin{lemma}
\label{lem:SignPfister}
Let $F$ be a field and let $a\in F^{*}$.
\begin{enumerate}[(i)]
\item For any ordering $P$ on $F$, the signature of the rank one form $\langle a\rangle$ with respect to $P$ is
$$\sign_P(\langle a \rangle)=\sgn_P(a)$$
\item The total signature of the Pfister form $\langle\langle a\rangle\rangle$ is 
$$\sign(\langle\langle a\rangle\rangle)=2\ind_{\{a<0\}}$$
\item The total signature of the $n$-fold Pfister form $\langle\langle a_1, \cdots, a_n\rangle\rangle$ is
$$\sign(\langle\langle a_1, \cdots, a_n\rangle\rangle)=2^n\ind_{\{a_1<0, \cdots, a_n<0\}}$$
 \end{enumerate}
\end{lemma}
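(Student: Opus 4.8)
The plan is to prove part (i) straight from the definition of the signature, deduce (ii) from (i) using additivity of $\sign_P$, and finally obtain (iii) from (ii) by induction, using that $\sign_P$ is multiplicative for the tensor product of forms. For part (i), the rank one form $\langle a\rangle$ is $q(v)=av^2$, and with respect to an ordering $P$ it is positive definite precisely when $a\in P$ and negative definite precisely when $a\in -P$ (since $1>0$ and $v^2\in P$ for every $v$). Hence in the decomposition $\langle a\rangle\simeq \phi_{+}\perp\phi_{-}$ one has $(n_{+},n_{-})=(1,0)$ if $a\in P$ and $(n_{+},n_{-})=(0,1)$ if $a\in -P$, so $\sign_{P}(\langle a\rangle)=n_{+}-n_{-}=\sgn_{P}(a)$ by the definition of $\sgn_P$.

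For part (ii), recall the $1$-fold Pfister form is $\langle\langle a\rangle\rangle=\langle 1,-a\rangle$. Since $\sign_{P}\colon W(F)\to\Z$ is a ring homomorphism it is in particular additive, so by part (i) and multiplicativity of $\sgn_P$ we get $\sign_{P}(\langle\langle a\rangle\rangle)=\sgn_{P}(1)+\sgn_{P}(-a)=1-\sgn_{P}(a)$. This equals $0$ when $a\in P$ and $2$ when $a\in -P$; as a function of $P$ it is therefore $2\ind_{\{a<0\}}$. Here one should note that $\sper F$ is simply the set of orderings of $F$ (as $F$ is a field), that for $a\in F^{*}$ the basic open sets $D(a)$ and $D(-a)$ partition $\sper F$, and hence that $\ind_{\{a<0\}}$ is a well-defined locally constant, in particular continuous, integer-valued function on $\sper F$.

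For part (iii), use the identity $\langle\langle a_{1},\dots,a_{n}\rangle\rangle\simeq \langle\langle a_{1},\dots,a_{n-1}\rangle\rangle\otimes\langle\langle a_{n}\rangle\rangle$. Because $\sign_{P}$ respects the multiplication on $W(F)$ coming from the tensor product of forms, induction on $n$ together with part (ii) yields $\sign_{P}(\langle\langle a_{1},\dots,a_{n}\rangle\rangle)=\prod_{i=1}^{n}\sign_{P}(\langle\langle a_{i}\rangle\rangle)=\prod_{i=1}^{n}2\ind_{\{a_{i}<0\}}(P)=2^{n}\ind_{\{a_{1}<0,\dots,a_{n}<0\}}(P)$, using that the pointwise product of the indicator functions of the clopen sets $D(-a_{i})$ is the indicator function of their intersection. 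Letting $P$ range over $\sper F$ gives the asserted identity of functions.

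There is no serious obstacle in this argument; everything reduces to the additivity and multiplicativity of $\sign_{P}$ recorded above together with part (i). The only point that deserves a line of care is the bookkeeping step identifying $1-\sgn_{P}(a)$ with $2\ind_{\{a<0\}}(P)$, and the accompanying remark that these indicator functions are genuinely elements of $H^{0}(\sper F,\Z)$, which holds because each $D(\pm a_{i})$ is clopen in $\sper F$ when $a_{i}\in F^{*}$.
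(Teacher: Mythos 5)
Your proof is correct and follows essentially the same approach as the paper: (i) directly from the definition, (ii) from (i) and additivity of $\sign_P$, and (iii) from (ii) and multiplicativity of $\sign_P$. You spell out a bit more detail (the positive/negative definiteness argument in (i), the clopen/continuity remark, the explicit induction in (iii)), but the structure matches the paper's proof.
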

\begin{proof}
The first statement follows from the definition of the signature. To prove the equality in the second statement we
use the definition and the fact that the signature is a ring homomorphism:
\begin{eqnarray*}
 \sign(\langle\langle a\rangle\rangle)
 &=&\sign(\langle 1,-a\rangle)\\
 &=&\sign(\langle 1 \rangle)-\sign(\langle a \rangle)\\
 &=&(\ind_{\{1>0\}}-\ind_{\{1<0\}})-(\ind_{\{a>0\}}-\ind_{\{a<0\}})\\
 &=&1-\ind_{\{a>0\}}+\ind_{\{a<0\}}\\
 &=& \begin{cases}
 0 & \textup{if } a>0\\
 2 & \textup{if } a<0
 \end{cases}\\
 &=& 2\ind_{\{a<0\}}
 \end{eqnarray*}
To prove the third statement, use $(ii)$ and the fact that $\sign$ is a ring homomorphism. 
\end{proof}

\subsection{The signature and the fundamental ideal}
\label{subsect:SignatureAndI^n}
As hyperbolic forms have even rank, assigning a quadratic form to its rank modulo 2 determines a ring homomorphism $e:W(F)\rightarrow \Z/2\Z$. The kernel of $e$ is denoted $I(F)$ and is called the fundamental ideal of $F$. The powers of the fundamental ideal $I^j(F)$ are additively generated by Pfister forms $\langle\langle a_1, \cdots, a_j \rangle\rangle$, so it follows from Lemma \ref{lem:SignPfister} that the signature induces a group homomorphism
\begin{equation*}
\sign:I^j(F)\rightarrow H^0(\sper F, 2^j\Z)
\end{equation*}

The diagram below commutes
\begin{equation*}
\xymatrix{
I^j(F)\ar^{\sign \hspace{7mm}}[r] \ar^2[d] & H^0(\sper F, 2^j\Z) \ar^2[d] \\
I^{j+1}(F)\ar^{\sign\hspace{7mm}}[r] & H^0(\sper F, 2^{j+1}\Z) 
}
\end{equation*} 
so after identifying
\begin{equation*}
\varinjlim(H^0(\sper F ,\Z)\stackrel{2}{\rightarrow}H^0(\sper F ,2\Z)\stackrel{2}{\rightarrow}H^0(\sper F ,2^2\Z)\stackrel{2}{\rightarrow}\cdots )\simeq H^0(\sper F ,\Z)
\end{equation*}
one obtains a homomorphism of groups. 
\begin{equation}\label{eqn:I^nColimitIso}
\varinjlim(W(F)\stackrel{2}{\rightarrow}I(F)\stackrel{2}{\rightarrow}I^2(F)\stackrel{2}{\rightarrow}\cdots )\stackrel{\sign}{\rightarrow} H^0(\sper F, \Z)
\end{equation}
 
The following is a theorem due to J. Arason and M. Knebusch \cite[Satz 2a.]{ArasonKnebusch}.
\begin{proposition}\label{prop:ColimitI^n}
The morphism \eqref{eqn:I^nColimitIso} is an isomorphism.
\end{proposition}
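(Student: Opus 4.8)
The plan is to establish injectivity and surjectivity of \eqref{eqn:I^nColimitIso} separately. The external inputs will be Pfister's local--global principle (a form over $F$ has signature zero at every ordering if and only if it is torsion in $W(F)$), Pfister's theorem that the torsion subgroup $W_t(F)$ of $W(F)$ is $2$-primary, and the signature computation for Pfister forms recorded in Lemma \ref{lem:SignPfister}. I will also use the elementary facts that $2^j I^k(F)\subseteq I^{k+j}(F)$ (because $2\phi\simeq\langle\langle -1\rangle\rangle\otimes\phi$) and that $\sper F$ is a Boolean space: it is compact, so any continuous $f\colon\sper F\to\Z$ is locally constant with finite image, hence a finite $\Z$-linear combination of indicator functions $\ind_C$ of clopen subsets $C$, and any such $C$, being compact and open, is a finite union of basic open sets $D(a_1)\cap\cdots\cap D(a_k)$ with $a_i\in F^{*}$. (If $F$ is not formally real both sides vanish, so we may assume $\sper F\neq\emptyset$.)

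For injectivity, note that an element of the colimit is represented by some $q\in I^n(F)$, and it maps to $0$ in the colimit precisely when $2^k q=0$ in $W(F)$ for some $k$, i.e. when $q\in W_t(F)$, using that Witt torsion is $2$-primary. Under the identification $\varinjlim\big(H^0(\sper F,\Z)\xrightarrow{2}H^0(\sper F,2\Z)\xrightarrow{2}\cdots\big)\simeq H^0(\sper F,\Z)$, the class of $q$ is sent to $2^{-n}\sign(q)$, which vanishes if and only if $\sign(q)=0$; by Pfister's local--global principle this holds if and only if $q\in W_t(F)$. The two conditions agree, so \eqref{eqn:I^nColimitIso} is injective.

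For surjectivity, replace each $a_i$ by $-a_i$ in Lemma \ref{lem:SignPfister}(iii) to obtain $\sign\big(\langle\langle -a_1,\dots,-a_k\rangle\rangle\big)=2^k\,\ind_{D(a_1)\cap\cdots\cap D(a_k)}$, with $\langle\langle -a_1,\dots,-a_k\rangle\rangle\in I^k(F)$. Given $f\in H^0(\sper F,\Z)$, write $f=\sum_j n_j\ind_{C_j}$ with $C_j$ clopen, write each $C_j$ as a finite union of basic opens, and expand $\ind_{C_j}$ by inclusion--exclusion into a $\Z$-combination of indicator functions of sets $\bigcap_{a\in T}D(a)$ with $T$ ranging over subsets of a fixed finite subset of $F^{*}$. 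Choosing $r$ to be at least the cardinality of that subset (simultaneously for all $j$) and clearing the single power $2^r$, one sees that $2^r f=\sign(q)$ for the form $q$ obtained as the matching $\Z$-combination of the forms $2^{r-|T|}\langle\langle\, (-a)_{a\in T}\,\rangle\rangle$; since $2^{r-|T|}\langle\langle\,(-a)_{a\in T}\,\rangle\rangle\in I^{|T|+(r-|T|)}(F)=I^r(F)$, we get $q\in I^r(F)$. Hence the class of $q$ in the colimit maps to $2^{-r}\sign(q)=f$, proving surjectivity.

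The step I expect to be the main obstacle is surjectivity, specifically the bookkeeping that lets one clear a \emph{single} power of $2$ and land inside a \emph{single} power $I^r(F)$ of the fundamental ideal rather than merely inside $W(F)$; this is exactly where Lemma \ref{lem:SignPfister}(iii), the inclusion $2^jI^k\subseteq I^{k+j}$, and the compactness bound on the number of basic opens needed to build a clopen set all come together. The injectivity half, by contrast, is essentially a formal consequence of Pfister's two theorems.
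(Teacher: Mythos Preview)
Your proof is correct. The paper itself does not give a proof of this proposition: it simply attributes the result to Arason and Knebusch and cites \cite[Satz~2a.]{ArasonKnebusch}. Your argument supplies a self-contained proof along the classical lines---Pfister's local--global principle and $2$-primary torsion theorem for injectivity, together with the Boolean (compact, totally disconnected) structure of $\sper F$ and the signature formula of Lemma~\ref{lem:SignPfister}(iii) for surjectivity---which is essentially how the Arason--Knebusch result is usually established, so there is no substantive divergence to discuss.
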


Since the cokernel of $\sign$ is $2$-primary torsion \cite[p.34, Theorem 3.4]{Lam}, one defines the reduced stability index $\str(F)$ to be the integer $n$ if the cokernel of $\sign$ has exponent $2^n$ and writes $\str(F)=\infty$ otherwise.

Recall that for any field $F$, the \emph{virtual cohomological $2$-dimension} is defined to be
\begin{equation*}
\vcd (F)=\cd(F(\sqrt{-1}))
\end{equation*}
where $\cd(F(\sqrt{-1}))$ denotes the cohomological $2$-dimension of the field $F(\sqrt{-1})$.

The following Lemma collects in a way we find convenient known results. 
\begin{lemma}\label{lem:EquivalentI^nProperties}
Let $F$ be a field of characteristic different from 2 and $s\geq 0$ an integer. The following are equivalent:
\begin{enumerate}[(i)]
\item $\vcd(F)\leq s$;
\item for $j\geq s+1$, $I^{j}(F(\sqrt{-1}))=0$;
\item for $j\geq s+1$, $I^{j}(F)$ is torsion free and $2I^{j-1}(F)=I^{j}(F)$;
\item for $j\geq s+1$, $I^{j}(F)$ is torsion free and $\str(F)\leq s$.
\item for $j\geq s+1$, $\sign_j:I^{j}(F)\rightarrow H^0(\sper F, 2^{j}\Z)$ is an isomorphism of rings.
\end{enumerate}
\end{lemma}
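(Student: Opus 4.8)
The strategy is to prove the five equivalences by establishing a cycle of implications, invoking the Milnor conjecture (now a theorem of Voevodsky--Orlov--Vishik, and in the form needed here the earlier work of Arason--Elman--Jacob on the structure of $I^n$) together with Proposition~\ref{prop:ColimitI^n}. First I would reduce everything over $F$ to statements over $F(\sqrt{-1})$. Since $F(\sqrt{-1})$ has no orderings, the signature vanishes there and $W(F(\sqrt{-1}))$ is 2-primarily torsion; the standard dictionary between the mod-2 cohomology of $F(\sqrt{-1})$ and the graded Witt ring $I^n/I^{n+1}$ (via the Milnor conjecture, $I^n(K)/I^{n+1}(K)\simeq H^n_{\text{\'et}}(K,\Z/2\Z)$) gives the equivalence of (i) and (ii): $\cd(F(\sqrt{-1}))\le s$ is equivalent to $H^j_{\text{\'et}}(F(\sqrt{-1}),\Z/2\Z)=0$ for $j\ge s+1$ (using that $\Z/2$-coefficient cohomological dimension controls all torsion coefficients since $2$ is the only relevant prime after adjoining $\sqrt{-1}$), which by the graded isomorphism and an induction upward is equivalent to $I^j(F(\sqrt{-1}))=0$ for $j\ge s+1$. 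The one subtlety here is passing from ``$I^j/I^{j+1}=0$ for all $j\ge s+1$'' to ``$I^j=0$''; this uses that $\bigcap_n I^n(K)=0$ for any field $K$ (Arason--Pfister), so the filtration is separated and vanishing of all the graded pieces above level $s$ forces $I^{s+1}=0$.

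Next I would connect (ii) to (iii). Given (ii), the short exact sequences $0\to I^{j+1}(F)\to I^j(F)\to \overline{I}^j(F)\to 0$ and the comparison with the analogous sequences over $F(\sqrt{-1})$ via the base-change map $W(F)\to W(F(\sqrt{-1}))$, whose kernel and cokernel are controlled by the orderings of $F$ (Pfister's local-global principle: the kernel of $W(F)\to W(F(\sqrt{-1}))$ is the torsion, and more precisely an element dies iff all its signatures vanish), let one deduce that for $j\ge s+1$ the torsion subgroup $I^j_t(F)$ injects into $I^j(F(\sqrt{-1}))=0$, hence $I^j(F)$ is torsion-free; and the surjectivity $2I^{j-1}(F)=I^j(F)$ follows because the graded piece $\overline{I}^{j-1}(F)\to\overline{I}^{j-1}(F(\sqrt{-1}))$ and the known description of multiplication by the class of $\langle\langle -1\rangle\rangle$ on the graded ring show that modulo $I^j$ every element of $I^{j-1}$ is $2$-divisible into $I^j$ once the cohomology stabilizes. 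Conversely (iii)$\Rightarrow$(ii): if $I^j(F)$ is torsion-free and $2I^{j-1}=I^j$ for $j\ge s+1$, then $I^{s+1}(F)$ is a torsion-free, $2$-divisible group inside $W(F)$, but every element of $I^n$ has bounded... — more carefully, iterating $2I^{j-1}=I^j$ shows $I^{s+1}=\bigcap_n 2^n I^{s+1}\subseteq \bigcap_n I^{s+1+n}=0$, and then base-changing gives $I^{s+1}(F(\sqrt{-1}))=0$ as well since $F(\sqrt{-1})$ inherits the relevant cohomological bound. The equivalence (iii)$\Leftrightarrow$(iv) is essentially a restatement once one knows $\str(F)\le s$ means exactly that the cokernel of $\sign\colon W(F)\to H^0(\sper F,\Z)$ has exponent dividing $2^s$, combined with Proposition~\ref{prop:ColimitI^n} which identifies the colimit of the $I^j$ under multiplication by $2$ with $H^0(\sper F,\Z)$: the condition $2I^{j-1}=I^j$ for $j\ge s+1$ says the colimit system stabilizes from stage $s$ on, which translates directly into the cokernel of $\sign$ on $W(F)$ being annihilated by $2^s$.

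Finally, for (v): by Lemma~\ref{lem:SignPfister} the signature already maps $I^j(F)$ into $H^0(\sper F,2^j\Z)$, and $\sign_j$ is injective exactly when $I^j(F)$ is torsion-free (Pfister: the kernel of the total signature is the torsion of $W(F)$, hence of $I^j(F)$), while surjectivity of $\sign_j\colon I^j(F)\to H^0(\sper F,2^j\Z)$ for $j\ge s+1$ is, after dividing by $2^j$, the statement that $\sign\colon I^j(F)\to H^0(\sper F,\Z)$ is onto, which given the stabilization $2I^{j-1}=I^j$ and Proposition~\ref{prop:ColimitI^n} is equivalent to (iii)/(iv); and the multiplicativity claim (``isomorphism of rings'') is automatic since $\sign$ is always a ring homomorphism and the target is a subring-compatible system. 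So (iii)$\Leftrightarrow$(v) closes the cycle. The main obstacle I anticipate is the careful bookkeeping in the base-change step between $F$ and $F(\sqrt{-1})$ — specifically, controlling simultaneously the torsion (which dies in $W(F(\sqrt{-1}))$) and the ``$2$-divisibility modulo higher $I^n$'' so that the equivalences (ii)$\Leftrightarrow$(iii) go through in both directions without circularity; this is where one must invoke the precise form of the Milnor conjecture and Pfister's local--global principle rather than soft arguments, and where I would be most careful to cite the literature (Arason--Elman--Jacob, Lam, \cite{ArasonKnebusch}) precisely.
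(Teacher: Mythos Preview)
Your treatment of (i)$\Leftrightarrow$(ii) is correct and matches the paper exactly: Milnor conjecture to identify the graded pieces with Galois cohomology, then the Arason--Pfister Hauptsatz to pass from vanishing of the graded pieces to vanishing of $I^{s+1}(F(\sqrt{-1}))$ itself.

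However, your argument for (iii)$\Rightarrow$(ii) contains a genuine error. You write that iterating $2I^{j-1}=I^j$ gives $I^{s+1}=\bigcap_n 2^n I^{s+1}\subseteq \bigcap_n I^{s+1+n}=0$, concluding $I^{s+1}(F)=0$. But the hypothesis $2I^{j-1}=I^j$ for $j\geq s+1$ gives $2^n I^{s+1}=I^{s+1+n}$, which is a \emph{decreasing} chain inside $I^{s+1}$; it does not say $I^{s+1}$ is $2$-divisible, and there is no reason for $I^{s+1}$ to equal the intersection. Indeed the conclusion $I^{s+1}(F)=0$ is false whenever $F$ is formally real: already for $F=\R$ one has $s=0$ and $I^j(\R)\simeq 2^j\Z\neq 0$ for every $j$. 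So this step cannot be repaired along the line you indicate; one really needs to pass to $F(\sqrt{-1})$ and control the restriction/transfer carefully, which is precisely the content of the reference the paper cites (Elman--Karpenko--Merkurjev, Corollary~35.27).

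Your sketch of (ii)$\Rightarrow$(iii) is also not correct as stated. You claim that the torsion subgroup of $I^j(F)$ injects into $I^j(F(\sqrt{-1}))$. But the restriction map $W(F)\to W(F(\sqrt{-1}))$ has kernel $\langle\langle -1\rangle\rangle W(F)=2W(F)$, so it is far from injective, and there is no a priori reason its restriction to torsion should be injective either. What is actually needed is the exact sequence relating $I^j(F)$, $I^j(F(\sqrt{-1}))$, and multiplication by $\langle\langle -1\rangle\rangle$ at the level of the $I$-adic filtration; this is again exactly what the cited corollary in Elman--Karpenko--Merkurjev packages. The paper does not attempt a self-contained proof of (ii)$\Leftrightarrow$(iii), (iii)$\Leftrightarrow$(iv), or (iv)$\Leftrightarrow$(v); it cites \cite{MerkKarpElma}, Br\"ocker \cite{Brocker}, and Arason--Knebusch \cite{ArasonKnebusch} respectively. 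Your outlines for (iii)$\Leftrightarrow$(iv) and for (v) are in the right spirit but would also need those references to be made rigorous.
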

\begin{proof}
For lack of reference, we prove the equivalence of $(i)$ and $(ii)$. 
To prove that $(i)$ implies $(ii)$, use the Milnor conjecture, as proved in \cite[Theorem 4.1]{Voevodsky}, to obtain that for every integer $n\geq0$ there is a short exact sequence
\begin{equation*}
0\rightarrow I^{j+1}(F(\sqrt{-1}))\rightarrow I^{j}(F(\sqrt{-1}))\rightarrow H^{j}_{Gal}(F(\sqrt{-1}),\Z/2\Z)
\end{equation*} 
So $(i)$ implies that $H^{j}_{Gal}(F(\sqrt{-1}),\Z/2\Z)$ is trivial for $j\geq s+1$. Then $I^{j}(F(\sqrt{-1}))\simeq \underset{k\geq j}{\cap} I^k(F(\sqrt{-1}))$, and the intersection is trivial by the Arason-Pfister Haupsatz. 
We have that $(ii)$ implies $(i)$ since $I^{j}(F(\sqrt{-1}))$ surjects onto $H^{j}_{Gal}(F(\sqrt{-1}),\Z/2\Z)$.

For the proof of the equivalence of $(ii)$ and $(iii)$, see \cf \cite[Corollary 35.27 (1) and (4)]{MerkKarpElma}.

For the remaining equivalences, we may assume that $F$ is formally real. Indeed, if $F$ is not formally real, then $W(F)$ is 2-primary torsion and $\sper F=\emptyset$. In which case, the remaining equivalences of the lemma become clear. The equivalence of $(iii)$ and $(iv)$ was proved by L. Br\"ocker \cite[ZurTheoreiDerquad, Satz 3.17]{Brocker}. The equivalence of $(iv)$ and $(v)$ by J. Arason and M. Knebusch \cite[p.184]{ArasonKnebusch}.  
\end{proof}

\section{Sheaves and residues}

\subsection{The Witt sheaf and its filtration by powers of the fundamental ideal}\label{subsect:WittSheafPowers}
Let $X$ be a scheme. Let $\W$ denote the sheafification for the Zariski topology on $X$ of the presheaf $U\mapsto W(U)$, where $W(U)$ is the Witt ring of symmetric bilinear forms over $U$ as defined in Knebusch \cite{KnebuschQueens}. 

Now let $F$ be a field of characteristic different from 2, and assume that $X$ is an integral, regular, noetherian, separated $F$-scheme.
Under these hypotheses on $X$, the Gersten conjecture is known \cite[Theorem 6.1]{GillePaninBalmerWalter}. It follows that the sections of the Witt sheaf $\W(U)$ over any open subscheme $U$ in $X$ is a subgroup of $W(K)$, where $K$ is the function field of $X$ and $W(K)$ is the classical Witt ring of the field $K$. For $j\geq 0$, define $\I^j$ to be the Zariski presheaf on $X$ that assigns to every open $U$ the pullback
\begin{equation*}
\xymatrix{
\W(U) \ar[r] & W(K) \\
\I^j(U) \ar[r] \ar[u] & I^j(K) \ar[u]
}
\end{equation*}
In fact, $\I^j$ is a sheaf on $X$. This follows by using the definition of $\I^j$ and the fact that $\W$ is a sheaf.

\subsection{The second residue homomorphism}
\label{subsec:secondresidue}
The next lemma follows from a result of P. Balmer and C. Walter on the coniveau spectral sequence \cite[Lemma 8.4 (a)]{BalmerWalter}.
\begin{lemma}
\label{lem:WittSheafEqualsKernel}
Let $F$ be a field of characteristic different from 2 and let $X$ be an integral, noetherian, regular, separated, $F$-scheme. For each $x\in X^{(1)}$ choose a uniformizing parameter $\pi$. For any open subscheme $U$ in $X$, the sections of the Witt sheaf over $U$ are $$\W(U):=\textup{ker}(W(K)\stackrel{\oplus\partial_{\pi}}{\rightarrow}\bigoplus_{x\in U^1}W(k(x)))$$ where $\partial_{\pi}$ is, for every $x\in U^1$, the second residue morphism for $\mathcal{O}_{X,x}$ and $K$ is the function field of $X$. In particular, the kernel of $W(K)\stackrel{\oplus\partial_{\pi}}{\rightarrow}\bigoplus_{x\in U^1}W(k(x))$ does not depend on the choice of uniformizing parameters.
\end{lemma}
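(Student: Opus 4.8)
My plan is to deduce this from the Gersten--Witt resolution of the Witt sheaf, which is available under the hypotheses of the lemma by the work of Balmer and Walter. First I would record that, since $X$ is an integral, noetherian, regular, separated $F$-scheme with $F$ of characteristic different from $2$, the Gersten conjecture for Witt groups holds by \cite[Theorem 6.1]{GillePaninBalmerWalter} (already invoked in Section \ref{subsect:WittSheafPowers}). By \cite[Lemma 8.4 (a)]{BalmerWalter} on the coniveau spectral sequence, this produces an exact complex of Zariski sheaves on $X$
\begin{equation*}
0 \to \W \to (i_\eta)_* W(K) \xrightarrow{d^0} \bigoplus_{x\in X^{(1)}} (i_x)_* W\big(k(x),\omega_x\big) \to \cdots,
\end{equation*}
the Gersten--Witt resolution of $\W$, in which $\eta$ is the generic point of $X$, the maps $i_\eta$ and $i_x$ are the inclusions of the respective points, $\omega_x$ is the canonical one-dimensional $k(x)$-space carrying the twist on the residue Witt group, and $d^0$ is the associated first (twisted) residue. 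Now evaluate on an open $U\subseteq X$: since $X$ is irreducible, $(i_\eta)_*W(K)$ has sections $W(K)$ over every nonempty open, while $(i_x)_*W(k(x),\omega_x)$ has sections $W(k(x),\omega_x)$ over $U$ when $x\in U$ and $0$ otherwise, so from $\W=\ker(d^0)$ as sheaves I obtain $\W(U)=\ker\big(W(K)\xrightarrow{d^0|_U}\bigoplus_{x\in U^{(1)}} W(k(x),\omega_x)\big)$.

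Second I would identify $d^0|_U$ with the total second residue $\oplus\partial_\pi$. Fixing, as in the statement, a uniformizer $\pi$ at each $x\in X^{(1)}$ trivializes $\mathfrak{m}_x/\mathfrak{m}_x^2$, hence also $\omega_x$, and thereby produces an isomorphism $W(k(x),\omega_x)\xrightarrow{\sim}W(k(x))$; by the construction of the Gersten--Witt differential in \cite{BalmerWalter}, the $x$-component of $d^0$ becomes, through this isomorphism, the classical second residue homomorphism $\partial_\pi\colon W(K)\to W(k(x))$ attached to the discrete valuation ring $\mathcal{O}_{X,x}$ and to $\pi$. As these trivialization isomorphisms are bijective, the kernel is unaffected, and combined with the previous step this gives the asserted description of $\W(U)$. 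Independence of the choice of uniformizers is then automatic, since $\W(U)$ was defined without any reference to a $\pi$; one can also see it directly from the fact that replacing $\pi$ by $u\pi$, $u\in\mathcal{O}_{X,x}^\times$, changes $\partial_\pi$ by the invertible one-dimensional form $\langle\bar u\rangle$ over $k(x)$, which leaves the kernel unchanged.

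The step I expect to need the most care is the bookkeeping in the second paragraph: matching Balmer and Walter's normalization of the twisting line $\omega_x$ and the attendant unit conventions, so that their first differential becomes \emph{literally} the second residue $\partial_\pi$ associated with $\pi$ rather than $\partial_\pi$ followed by some (harmless) automorphism of $W(k(x))$. Since only the kernel enters the statement of the lemma, any such discrepancy would be immaterial; but I would want to quote the precise normalization from \cite{BalmerWalter} to keep the identification clean.
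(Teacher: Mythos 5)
The paper does not supply a written-out proof for this lemma; it simply records that it ``follows from a result of P.\ Balmer and C.\ Walter on the coniveau spectral sequence \cite[Lemma 8.4 (a)]{BalmerWalter}'' (with the Gersten conjecture for Witt groups already in place via \cite[Theorem 6.1]{GillePaninBalmerWalter}, invoked one paragraph earlier). Your proposal is correct and takes exactly this route, only with the bookkeeping made explicit: you unwind the Gersten--Witt resolution, evaluate on an open $U$, trivialize the twisting lines $\omega_x$ via the chosen uniformizers to identify the first differential with the classical second residue, and observe that the kernel is insensitive to the choice of uniformizer. That last observation, while not stated by the paper, is a useful sanity check that the independence claim in the lemma is automatic.
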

By restricting the second residue from $W(K)$ to $I^j(K)$ one obtains a map $$I^j(K)\rightarrow \underset{x\in U^{(1)}}{\bigoplus} I^{j-1}\left(k\left(x\right)\right)$$ because the second residue maps respect the powers of the fundamental ideal \cite[Theorem 6.6]{Gille07}.
\begin{lemma}
\label{lem:KernelIjresidue}
Let $X$ satisfy the hypotheses of Lemma \ref{lem:WittSheafEqualsKernel} and let $j\geq 0$ be an integer. For every open subscheme $U$ in $X$,
$$\I^j(U)=\textup{ker}(I^j\left(K\right) \stackrel{\oplus\partial_{\pi}}{\rightarrow} \underset{x\in U^{(1)}}{\bigoplus} I^{j-1}\left(k\left(x\right)\right))
$$
\end{lemma}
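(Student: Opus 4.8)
The plan is to reduce the statement to Lemma~\ref{lem:WittSheafEqualsKernel} by unwinding the pullback definition of $\I^j$. First I would observe that, since the Gersten conjecture holds for $X$, the map $\W(U)\to W(K)$ is the inclusion of a subgroup, and $I^j(K)\to W(K)$ is the inclusion of a subgroup (indeed of an ideal); hence the pullback square defining $\I^j(U)$ simply computes the intersection
$$\I^j(U)=\W(U)\cap I^j(K)\subseteq W(K).$$

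Next I would insert the description of $\W(U)$ furnished by Lemma~\ref{lem:WittSheafEqualsKernel}, namely $\W(U)=\ker\bigl(W(K)\xrightarrow{\oplus\partial_\pi}\bigoplus_{x\in U^{(1)}}W(k(x))\bigr)$. Intersecting with $I^j(K)$, this says that $\I^j(U)$ is exactly the set of $\phi\in I^j(K)$ with $\partial_\pi(\phi)=0$ for every $x\in U^{(1)}$, i.e. the kernel of the composite $I^j(K)\hookrightarrow W(K)\xrightarrow{\oplus\partial_\pi}\bigoplus_{x\in U^{(1)}}W(k(x))$. The final step uses that the second residue homomorphism respects the fundamental filtration \cite[Theorem 6.6]{Gille07} (recalled just before the statement): $\partial_\pi$ maps $I^j(K)$ into $I^{j-1}(k(x))$, so the composite above factors through the subgroup $\bigoplus_{x\in U^{(1)}}I^{j-1}(k(x))\subseteq\bigoplus_{x\in U^{(1)}}W(k(x))$. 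Since the inclusion of that subgroup is injective, the kernel of the map into $\bigoplus_x W(k(x))$ coincides with the kernel of the map into $\bigoplus_x I^{j-1}(k(x))$, which is precisely the asserted equality.

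I do not expect any real obstacle: the lemma is a formal consequence of Lemma~\ref{lem:WittSheafEqualsKernel} together with the compatibility of residues with powers of the fundamental ideal. The only points requiring a little care are that the pullback defining $\I^j$ is genuinely an intersection inside the constant group $W(K)$—which relies on $\W$ being a subsheaf of the constant sheaf $W(K)$ on the integral scheme $X$, as recorded in Section~\ref{subsect:WittSheafPowers}—and that one is free to replace the target $\bigoplus_{x}W(k(x))$ by its subgroup $\bigoplus_{x}I^{j-1}(k(x))$ when forming the kernel, which is legitimate precisely because each $I^{j-1}(k(x))$ is a subgroup of $W(k(x))$. As in Lemma~\ref{lem:WittSheafEqualsKernel}, independence of the choice of uniformizing parameters is automatic, since $\I^j(U)$ has been defined without reference to them.
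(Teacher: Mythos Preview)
Your proof is correct and follows essentially the same approach as the paper: identify the pullback $\I^j(U)$ with the intersection $\W(U)\cap I^j(K)$ inside $W(K)$, then invoke Lemma~\ref{lem:WittSheafEqualsKernel}. You are in fact slightly more explicit than the paper in justifying why the kernel of the map into $\bigoplus_x W(k(x))$ coincides with the kernel of the map into the subgroup $\bigoplus_x I^{j-1}(k(x))$.
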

\begin{proof}
 By Lemma \ref{lem:WittSheafEqualsKernel} the kernel 
$$\textup{ker}(W\left(K\right) \stackrel{\oplus\partial_{\pi}}{\rightarrow} \underset{x\in U^{(1)}}{\bigoplus} W\left(k\left(x\right)\right))
$$
equals $\W(U)$. Then we have that $\I^j(U)$ equals the restriction of the kernel of $\oplus\partial_{\pi}$ to $I^j(K)$ since, by definition, $\I^j(U)$ is the pullback of $\W(U)$ over $I^j(K)$.
\end{proof}

The following lemma restates well-known facts on the second residue (\cf \cite[Chap. IV (1.2)-(1.3)]{MilnorHuse}.

\begin{lemma}
\label{lem:residueWittOnElements}
Let $A$ be a discrete valuation ring with fraction field $K$. Choose a uniformizing parameter $\pi$ for $A$. Then:
\begin{enumerate}[(i)]
\item every rank one quadratic form over $K$ is isometric to some $\langle c \rangle$, where $c=b\pi^{n}$, $b$ is a unit in $A$, and either $n=0$ or $n=1$;  
\item the second residue $\partial_{\pi}$ has the following description
\begin{equation*}
\partial_{\pi}(\langle c\rangle)=
\begin{cases}
\langle \overline{b}\rangle & \textup{ if $n=1$} \\
0 & \textup{ if $n=0$}
\end{cases}
\end{equation*}
on rank one forms $\langle c\rangle$ as in $(i)$.
\end{enumerate}
\end{lemma}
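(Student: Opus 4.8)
The plan is to dispatch the two parts separately; both are elementary manipulations with square classes over the discrete valuation ring $A$, whose residue field I write $k=A/\m$.

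For part (i), I would use that over a field of characteristic different from $2$ the isometry classes of rank-one forms are classified by the square-class group: $\langle a\rangle\simeq\langle a'\rangle$ if and only if $a/a'\in(K^{*})^{2}$. Since $A$ is a discrete valuation ring, every $a\in K^{*}$ can be written as $a=b\pi^{m}$ with $b\in A^{*}$ and $m\in\Z$; writing $m=2\ell+n$ with $n\in\{0,1\}$ gives $a=(b)(\pi^{\ell})^{2}\pi^{n}$, hence $\langle a\rangle\simeq\langle b\pi^{n}\rangle$. Setting $c=b\pi^{n}$ proves (i).

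For part (ii), I would recall the construction of the second residue homomorphism $\partial_{\pi}\colon W(K)\to W(k)$ attached to $\pi$ as in \cite{MilnorHuse}: since every nondegenerate form over $K$ diagonalizes and each diagonal entry can be normalized by part (i), every class in $W(K)$ is represented by a form $\langle u_{1}\rangle\perp\cdots\perp\langle u_{s}\rangle\perp\langle v_{1}\pi\rangle\perp\cdots\perp\langle v_{t}\pi\rangle$ with all $u_{i},v_{j}\in A^{*}$, and $\partial_{\pi}$ sends this class to $\langle\overline{v_{1}}\rangle\perp\cdots\perp\langle\overline{v_{t}}\rangle$ in $W(k)$, the bar denoting reduction modulo $\m$. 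Granting this, (ii) is immediate: the rank-one form $\langle c\rangle=\langle b\pi^{n}\rangle$ from (i) is already of the displayed shape, with only a ``$\pi$-part'' $\langle b\pi\rangle$ when $n=1$, so that $\partial_{\pi}(\langle c\rangle)=\langle\overline{b}\rangle$, and only a ``unit part'' $\langle b\rangle$ when $n=0$, so that $\partial_{\pi}(\langle c\rangle)=0$.

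The only step where the ``well-known'' status carries real weight is the well-definedness of $\partial_{\pi}$ on $W(K)$ --- that the value $\langle\overline{v_{1}}\rangle\perp\cdots\perp\langle\overline{v_{t}}\rangle$ is independent of the chosen diagonalization and is unchanged upon adding hyperbolic planes (note that the hyperbolic plane $\langle 1,-1\rangle$ has both diagonal entries units, hence maps to $0$). This, together with the fact that $\partial_{\pi}$ depends on the uniformizer only through its class in $A^{*}/(A^{*})^{2}$, is precisely what is recorded in \cite[Chap. IV (1.2)--(1.3)]{MilnorHuse}, so I would cite it rather than reprove it; with that in hand there is no further obstacle.
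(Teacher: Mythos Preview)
Your proposal is correct and essentially matches the paper's treatment: the paper gives no proof at all, merely citing \cite[Chap.~IV (1.2)--(1.3)]{MilnorHuse} as the source of these well-known facts, and your argument is precisely an unpacking of that reference (normalize via square classes using the valuation for (i), then read off the second residue from the standard diagonal form for (ii)).
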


\subsection{Cohomology of the real spectrum}
\label{subsect:RealCoh}
In \cite{S94}, C. Scheiderer developed a theory of \emph{real cohomology} for schemes. It ``globalizes'' to all schemes the singular cohomology of the real points of a real variety in the same way that \'etale cohomology globalizes the singular cohomology of the complex points of a complex variety. We recall the definition and some properties we will need following \cite{S94}.\\

\subsubsection{Definition of real cohomology}\label{subsubsect:RealCoh}
Let $X$ be a scheme. First we recall the definition of the \emph{real site} of $X$, which we will also denote by $X_r$. It is the category $\textup{O}(X_r)$ of open subsets of $X_r$ equipped with the ``usual'' coverings, \ie a family of open subspaces $\{U_\lambda\rightarrow U\}$ is a covering of $U\in \textup{O}(X_r)$ if $U=\cup U_{\lambda}$. The category of sheaves of abelian groups on $X_r$ is denoted $\textup{Ab}(X_r)$. 

For any abelian group $A$, we will also denote by $A$ the sheaf on $X_r$ obtained by sheafifying the presheaf $U\mapsto A$, $U$ any open in $X_r$. Such a sheaf is called a \emph{constant sheaf}.

Let $Ab$ denote the category of abelian groups. For any sheaf $\F$ on $X_r$, the \emph{real cohomology groups of $X$ with coefficients in $\F$} are the right derived functors of the global sections functor $\Gamma:\textup{Ab}(X_r)\rightarrow \textup{Ab}$. They are denoted by 
$$H^p(X_r,\F):=R^p\Gamma\F$$
where $R^p\Gamma$ is the $p$-th derived functor of $\Gamma$. When $X=\spec A$ is affine, we will often write $H^p(\sper A, \F)$ instead of $H^p(X_r,\F)$.

For any separated scheme of finite type over $\R$ (or over any real closed field, with semi-algebraic cohomology replacing singular cohomology), the category of sheaves on $X(\R)$, where $X(\R)$ is equipped with the Euclidean topology, is equivalent to the category of sheaves on $X_r$ \cite[15.4.1]{S94}. Let $M$ be an abelian group. Since the sheaf cohomology of $X(\R)$ with coefficients $M$ coincides with the singular cohomology with coefficients $M$ \cite[15.1 and 15.3.2]{S94}, it follows that the real cohomology groups $H^p(X_r,M)$ coincide with the singular cohomology groups $H^p(X(\R),M)$. Hence, they are finitely generated groups. 

\subsubsection{Scheiderer's Gersten type resolution for real cohomology}
Next we recall the work of C. Scheiderer \cite{Scheiderer95} in which he constructs a ``Bloch-Ogus'' style complex that computes real cohomology. The codimension of support filtration on $X$ determines a coniveau spectral sequence for real cohomology. Scheiderer shows that for regular excellent schemes the $\text{E}_1$-page is zero except for the complex $C^{\bullet}(W,\F):=\text{E}_1^{*,0}$ and hence obtains the result below. Recall that a locally noetherian scheme is called excellent if $X$ can be covered by open affine subschemes $\spec A_{\alpha}$ where the $A_{\alpha}$ are excellent rings \cite[7.8.5]{EGAIV1}. For a point $x\in X$ of a scheme, we will denote $\sper k(x)$ by $x_r$.
\begin{proposition}\label{prop:complexReal}\cite[2.1 Theorem]{Scheiderer95}
Let $X$ be a regular excellent scheme. Let $W$ be an open constructible subset of $X_r$, and let $\F$ be a locally constant sheaf on $W$. Then there is a complex $C^{\bullet}(W,\F)$ of abelian groups
\begin{equation}
0\rightarrow \bigoplus_{x\in X^{(0)}}H^0_x(W,\F)\rightarrow \bigoplus_{x\in X^{(1)}}H^1_x(W,\F)\rightarrow \bigoplus_{x\in X^{(2)}}H^2_x(W,\F)\rightarrow \cdots ,
\end{equation}
natural in $W$ and $\F$, whose $q$th cohomology group is canonically isomorphic to $H^q(W, \F)$, $q
\geq 0$. Here $H^q_x(W,\F):=H^q_{x_r\cap W}(\sper \mathcal{O}_{X,x}\cap W, \F)$. This complex is contravariantly functorial for flat morphisms of schemes.
\end{proposition}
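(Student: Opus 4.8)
This is Scheiderer's theorem \cite[2.1 Theorem]{Scheiderer95}, and the plan is to follow the Bloch--Ogus method adapted to the real site: one produces $C^{\bullet}(W,\F)$ as the bottom row of a coniveau spectral sequence and shows that every higher row vanishes. To set up the coniveau filtration, recall that $X$ carries the decreasing filtration by the subsets $X^{(\geq p)}$ of points of codimension at least $p$; pulling this back along the support map $\supp\colon X_r\to X$ and intersecting with $W$ gives a filtration of $W$ whose associated graded is controlled by cohomology with supports. The corresponding exact couple produces a spectral sequence
\[
E_1^{p,q}=\bigoplus_{x\in X^{(p)}}H^{p+q}_x(W,\F)\ \Longrightarrow\ H^{p+q}(W,\F),
\]
where a priori $H^{n}_x(W,\F)=\varinjlim_{U\ni x}H^{n}_{\overline{\{x\}}_r\cap U_r\cap W}(U_r\cap W,\F)$, the colimit over affine open neighbourhoods $U$ of $x$ in $X$. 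Since $\sper$ converts the filtered colimit of rings $\mathcal{O}_{X,x}=\varinjlim_{U\ni x}\mathcal{O}_X(U)$ into the cofiltered limit of spectral spaces $\sper\mathcal{O}_{X,x}=\varprojlim_{U\ni x}U_r$, continuity of sheaf cohomology under such limits identifies this colimit with $H^{n}_{x_r\cap W}(\sper\mathcal{O}_{X,x}\cap W,\F)$, the group denoted $H^n_x(W,\F)$ in the statement; this excision step uses only the spectral-space structure of the real site. Naturality in $W$ and $\F$ and contravariance for flat morphisms are then inherited from the functoriality of the construction, using that flat morphisms preserve the codimension-of-support filtration \cite{EGAIV1}.

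Everything now reduces to the Gersten vanishing $H^{n}_{x_r\cap W}(\sper\mathcal{O}_{X,x}\cap W,\F)=0$ for $n\neq p:=\operatorname{codim}(x)$: granting it, $E_1^{p,q}=0$ for $q\neq 0$, the spectral sequence degenerates at $E_2$, and $H^{q}(W,\F)\cong E_2^{q,0}=H^{q}(C^{\bullet}(W,\F))$, as asserted, with $C^{\bullet}(W,\F)=E_1^{*,0}$ the claimed complex. The vanishing for $n>p$ is the routine half: Scheiderer's bound that the real cohomological dimension of a scheme is at most its Krull dimension \cite{S94}, applied to $\mathcal{O}_{X,x}$ (of dimension $p$) and to its punctured spectrum (of dimension $p-1$), forces $H^n$ and $H^n$-with-supports to vanish above degree $p$ via the localization long exact sequence.

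The vanishing for $n<p$ is the heart of the argument and the step I expect to be the main obstacle: it is a semi-purity statement for the real cohomology of the regular local ring $\mathcal{O}_{X,x}$. Here I would argue along the lines of the purity-free Bloch--Ogus--Gabber approach. Since $\mathcal{O}_{X,x}$ is regular and excellent, a geometric presentation lemma of Gabber/Panin type exhibits a suitable neighbourhood of $x$, up to the relevant colimit, essentially as a relative affine line over a regular base of codimension $p-1$; combining homotopy invariance of real cohomology for the locally constant sheaf $\F$ \cite{S94} with the localization sequence then carries out a dévissage, reducing inductively to the codimension-zero case over a (formally) real residue field, where $\sper$ of the residue field is a point and cohomology with supports is trivially concentrated in degree zero. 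The delicate point is that one needs such a presentation and dévissage for an arbitrary regular excellent scheme, not merely for smooth schemes over a field; this is precisely the technical content of \cite{Scheiderer95}, whose argument I would invoke for this step rather than reproduce here.
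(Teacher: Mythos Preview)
The paper does not give its own proof of this proposition: it is simply recalled from \cite[2.1 Theorem]{Scheiderer95}, with only the one-sentence sketch preceding the statement (the coniveau spectral sequence for real cohomology degenerates because Scheiderer shows the $E_1$-page vanishes off the row $q=0$). Your proposal follows precisely this outline and fleshes it out appropriately, including the correct identification of the hard step (semi-purity for $n<p$) and the honest acknowledgment that this is where one must invoke Scheiderer's actual argument; so your approach is correct and coincides with what the paper indicates.
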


\subsection{The support sheaf}
Let $\textup{Ab}(X_{Zar})$ denote the category of sheaves of abelian groups on the Zariski site $X_{Zar}$. Since the support map $\supp:X_r\rightarrow X$ is a continuous map of topological spaces it induces the direct image functor $\supp_{*}:\textup{Ab}(X_r)\rightarrow \textup{Ab}(X_{Zar})$. This functor is faithful and exact \cite[Theorem 19.2]{S94}. 
\begin{lemma}\label{lem:ZariskiCohomologySheafSuppZ}
Let $X$ be a scheme. For any sheaf $\F\in \textup{Ab}(X_r)$, 
 $$H^p(X_r,\F)\simeq H^p_{Zar}(X,\supp_{*}\F)$$
\end{lemma}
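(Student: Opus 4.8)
The plan is to exhibit $\supp_*$ as the right adjoint of an exact functor, so that it carries injective objects to injective objects, and then to push an injective resolution on $X_r$ down to an injective resolution on $X_{Zar}$, using in addition that $\supp_*$ is itself exact (as recorded just above the statement).

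First I would recall that, being induced by the continuous map $\supp\colon X_r\to X$, the functor $\supp_*$ is right adjoint to the inverse image functor $\supp^{-1}\colon \textup{Ab}(X_{Zar})\to \textup{Ab}(X_r)$, and that $\supp^{-1}$ is exact, inverse image of sheaves of abelian groups along a continuous map being computed stalkwise. Since a right adjoint of an exact functor carries injectives to injectives, $\supp_*$ sends injective objects of $\textup{Ab}(X_r)$ to injective objects of $\textup{Ab}(X_{Zar})$.

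Next, choose an injective resolution $0\to \F\to I^{\bullet}$ in $\textup{Ab}(X_r)$. Applying $\supp_*$, which is exact by \cite[Theorem 19.2]{S94}, produces an exact sequence $0\to \supp_*\F\to \supp_*I^{\bullet}$, that is, a resolution of $\supp_*\F$; by the preceding paragraph each term $\supp_*I^{q}$ is injective in $\textup{Ab}(X_{Zar})$, so $\supp_*I^{\bullet}$ is an \emph{injective} resolution of $\supp_*\F$. I would then compute both sides from this common resolution: since $\supp^{-1}(X)=X_r$, for any $\mathcal{G}\in \textup{Ab}(X_r)$ one has $\Gamma(X_r,\mathcal{G})=(\supp_*\mathcal{G})(X)=\Gamma(X,\supp_*\mathcal{G})$, i.e.\ $\Gamma_{X_r}=\Gamma_{X_{Zar}}\circ \supp_*$ as functors on $\textup{Ab}(X_r)$. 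Hence
$$H^p(X_r,\F)=H^p\bigl(\Gamma(X_r,I^{\bullet})\bigr)=H^p\bigl(\Gamma(X,\supp_*I^{\bullet})\bigr)=H^p_{Zar}(X,\supp_*\F),$$
the last equality because $\supp_*I^{\bullet}$ is an injective resolution of $\supp_*\F$; naturality in $\F$ is clear from the construction.

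The only step needing genuine attention is the preservation of injectives; the rest is formal once the exactness of $\supp_*$ from the result cited above is in hand. Alternatively one could argue via the Leray spectral sequence $E_2^{p,q}=H^p_{Zar}(X,R^q\supp_*\F)\Rightarrow H^{p+q}(X_r,\F)$ and note that exactness of $\supp_*$ forces $R^q\supp_*\F=0$ for $q>0$, collapsing it to the asserted isomorphism; the injective-resolution argument above is chosen because it is self-contained.
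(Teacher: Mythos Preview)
Your proof is correct. The paper takes precisely the alternative route you mention at the end: it invokes the Grothendieck spectral sequence for the composition $\Gamma\circ\supp_*$, with $E_2^{p,q}=H^p_{Zar}(X,R^q\supp_*\F)\Rightarrow H^{p+q}(X_r,\F)$, and then uses \cite[Theorem 19.2]{S94} to conclude $R^q\supp_*\F=0$ for $q>0$, so the edge maps give the isomorphism. Your main argument unpacks this: the facts that $\supp_*$ is exact and preserves injectives are exactly what makes the Grothendieck spectral sequence exist and degenerate, so the two proofs have identical content. Your direct injective-resolution computation is slightly more self-contained, while the paper's version is shorter because it outsources the bookkeeping to the spectral-sequence machinery.
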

\begin{proof}
Using the Grothendieck spectral sequence for the composition of the functors $\supp_{*}$ and the global sections functor $\Gamma$ we obtain a spectral sequence with $\text{E}_2^{p,q}=H^p_{Zar}(X, R^q\supp_{*}\F)$ that abuts to $H^{p+q}(X_r,\F)$. For $q>0$, the sheaves $R^q\supp_{*}\F$ vanish \cite[Theorem 19.2]{S94}. Therefore the edge maps in this spectral sequence determine isomorphisms $H^p(X_r,\F)\stackrel{\simeq}{\rightarrow}H^p_{Zar}(X,\supp_{*}\F)$ for $p\geq 0$.
\end{proof}

\subsection{A residue-type homomorphism for real cohomology}
\begin{definition}\label{def:Beta}
Let $(B,\m)$ be a discrete valuation ring, let $K$ denote the fraction field of $B$ and let $\pi$ be a uniformizing parameter for $B$. Let $A$ be an abelian group equipped with the discrete topology. For any point $\xi \in \sper B/\m$, there are exactly two orderings on $K$ which induce the ordering of $\xi$ on $B/\m$ and they differ by their sign on the uniformizing parameter \cite[Theorem 10.1.10, see Definition 10.1.7 for induced orderings]{Coste}. Denote them by $\eta_{+}$ and $\eta_{-}$, where $\sgn_{\eta_{+}}(\pi)=1$ and $\sgn_{\eta_{-}}(\pi)=-1$. The group homomorphism
\begin{equation*}
\beta_{\pi}: H^0(\sper K, A) \rightarrow H^0(\sper B/\m, A)
\end{equation*}
is defined by assigning $s\in H^0(\sper K, A)$ to the map $\xi\mapsto \beta_{\pi}(s)(\xi):=s(\eta_{+})-s(\eta_{-})$. If $\sper B/\m=\emptyset$, then it is defined to be zero.
\end{definition}

\begin{lemma}\label{lem:BetaMorphism}
Let $B$ be a discrete valuation ring and $\pi$ be a uniformizing parameter for $B$.
The morphism $\beta_{\pi}$ of Definition \ref{def:Beta} has the following description on elements $\sign(\langle c \rangle)$, where $c=b\pi^{n}$, $b$ is a unit in $B$, and either $n=0$ or $n=1$:
\begin{equation*}
\beta_{\pi}(\sign(\langle c \rangle))=
\begin{cases}
2\sign(\langle\overline{b}\rangle) & \textup{ if $n$ is 1} \\
0 & \textup{ if $n$ is 0}
\end{cases}
\end{equation*}
\end{lemma}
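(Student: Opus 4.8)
The plan is to reduce the computation of $\beta_\pi(\sign(\langle c\rangle))$ to a pointwise check at each $\xi \in \sper B/\m$, using the explicit description of $\beta_\pi$ in Definition \ref{def:Beta} and the formula for the signature of a rank one form in Lemma \ref{lem:SignPfister}(i), namely $\sign_P(\langle c\rangle) = \sgn_P(c)$. Fix $\xi \in \sper B/\m$ and let $\eta_+, \eta_-$ be the two orderings on $K$ inducing $\xi$, distinguished by $\sgn_{\eta_+}(\pi) = 1$ and $\sgn_{\eta_-}(\pi) = -1$. By definition, $\beta_\pi(\sign(\langle c\rangle))(\xi) = \sign_{\eta_+}(\langle c\rangle) - \sign_{\eta_-}(\langle c\rangle) = \sgn_{\eta_+}(c) - \sgn_{\eta_-}(c)$.

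Next I would use multiplicativity of $\sgn_P$ on $F^*$ (noted in Section \ref{subsect:ordering}) together with $c = b\pi^n$ to write $\sgn_{\eta_\pm}(c) = \sgn_{\eta_\pm}(b)\,\sgn_{\eta_\pm}(\pi)^n$. The key input is that both $\eta_+$ and $\eta_-$ induce the \emph{same} ordering $\xi$ on the residue field $B/\m$; since $b$ is a unit in $B$, its sign with respect to either $\eta_+$ or $\eta_-$ is computed by reducing modulo $\m$ and using $\xi$, so $\sgn_{\eta_+}(b) = \sgn_{\eta_-}(b) = \sgn_\xi(\overline{b}) = \sign_\xi(\langle \overline b\rangle)$. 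This is the point that needs the cited fact from \cite{Coste} about induced orderings, and it is the only genuinely nontrivial step — everything else is bookkeeping.

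Then I would split into the two cases. If $n = 0$, then $\sgn_{\eta_\pm}(\pi)^n = 1$, so $\sgn_{\eta_+}(c) = \sgn_{\eta_-}(c)$ and the difference vanishes, giving $\beta_\pi(\sign(\langle c\rangle))(\xi) = 0$. If $n = 1$, then $\sgn_{\eta_+}(\pi)^n = 1$ and $\sgn_{\eta_-}(\pi)^n = -1$, so $\sgn_{\eta_+}(c) = \sgn_\xi(\overline b)$ while $\sgn_{\eta_-}(c) = -\sgn_\xi(\overline b)$, whence the difference is $2\sgn_\xi(\overline b) = 2\sign_\xi(\langle \overline b\rangle)$. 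Since this holds for every $\xi \in \sper B/\m$, we conclude $\beta_\pi(\sign(\langle c\rangle)) = 2\sign(\langle \overline b\rangle)$ in $H^0(\sper B/\m, A)$ when $n = 1$, and $= 0$ when $n = 0$; the degenerate case $\sper B/\m = \emptyset$ is covered by the definition of $\beta_\pi$.

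I expect no serious obstacle: the statement is essentially the real-cohomology shadow of the second-residue formula of Lemma \ref{lem:residueWittOnElements}, and the proof is a direct unwinding of definitions. The one place to be careful is justifying that a unit of $B$ has a well-defined sign at $\xi$ independent of the lift of $\xi$ to an ordering of $K$ — i.e. that the value $\sgn_{\eta}(b)$ for $\eta$ over $\xi$ depends only on the residue ordering $\xi$ — which is exactly the content of the induced-ordering theory quoted in Definition \ref{def:Beta}.
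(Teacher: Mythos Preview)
Your proposal is correct and follows essentially the same approach as the paper's proof: both compute $\beta_\pi(\sign(\langle c\rangle))$ pointwise at $\xi\in\sper B/\m$ via $\sgn_{\eta_+}(c)-\sgn_{\eta_-}(c)$, use multiplicativity of $\sgn$ on $c=b\pi^n$, invoke that $\sgn_{\eta_\pm}(b)=\sgn_\xi(\overline{b})$ because $b$ is a unit and both orderings induce $\xi$, and then split on $n\in\{0,1\}$ using $\sgn_{\eta_\pm}(\pi)=\pm1$. The only cosmetic difference is that you factor out the multiplicativity step uniformly before the case split, whereas the paper interleaves it with the cases.
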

\begin{proof}
Let $c=b\pi^{n}$, where $b$ is a unit in $B$, and either $n=0$ or $n=1$. We have the following equalities which will prove the lemma. For any $\xi \in \sper B/\mathbf{m}$:  
\begin{eqnarray*}
\beta_{\pi}(\sign(\langle c\rangle))(\xi)
&=&\sign_{\eta_+}(\langle c \rangle)-\sign_{\eta_{-}}(\langle c \rangle)\\
&=&\sgn_{\eta_+}(c)-\sgn_{\eta_{-}}(c)\\
&=&\begin{cases}
\sgn_{\xi}(\overline{c}) - \sgn_{\xi}(\overline{c}) & \textup{if } n=0 \textup{ (both orderings induce } \xi) \\
\sgn_{\eta_{+}}(b\pi) - \sgn_{\eta_{-}}(b\pi)& \textup{if } n=1\\
\end{cases}\\
&=&\begin{cases}
0 & \textup{if } n=0 \\
\sgn_{\eta_{+}}(b)\sgn_{\eta_{+}}(\pi) - \sgn_{\eta_{-}}(b)\sgn_{\eta_{-}}(\pi)& \textup{if } n=1\\
\end{cases}\\
&=&\begin{cases}
0 & \textup{if } n=0 \\
\sgn_{\eta_{+}}(b)+\sgn_{\eta_{-}}(b) & \textup{if } n=1 \textup{ (by definition of }\eta_+ \textup{ and } \eta_{-})\\
\end{cases}\\
&=&\begin{cases}
0 & \textup{if } n=0 \\
\sgn_{\xi}(\overline{b}) + \sgn_{\xi}(\overline{b})& \textup{if } n=1 \textup{ (both orderings induce } \xi)\\
\end{cases}\\
&=& \begin{cases}
0 & \textup{if } n=0 \\
2\sgn_{\xi}(\overline{b}) & \textup{if } n=1\\
\end{cases}\\
\end{eqnarray*} 
\end{proof}

The following Lemma is based on the proof of \cite[2.6 Proposition]{Scheiderer95} where $A=\Z/2\Z$. 
\begin{lemma}\label{lem:BetaResidue}
Let $X$ be a regular excellent scheme which is integral with function field $K$. Let $x\in X^{(1)}$, let $\pi$ denote a choice of uniformizing parameter for $\mathcal{O}_{X,x}$, and let $A$ be a constant sheaf on $X_r$. Denote by $\partial$ the map
\begin{equation*}
H^0(\sper K, A)\rightarrow H^1_{x_r}(\sper \mathcal{O}_{X,x},A)
\end{equation*} induced by first differential of the complex $C^{\bullet}(X_r,A)$ (see Proposition \ref{prop:complexReal}). Then, there is an isomorphism $\iota_{\pi}:H^1_{x_r}(\sper \mathcal{O}_{X,x}, A)\rightarrow H^0(x_r, A)$ for which $\iota_{\pi}(\partial)=\beta_{\pi}$.
\end{lemma}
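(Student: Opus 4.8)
The plan is to identify $\partial$ with the connecting homomorphism of a localization long exact sequence and then to compute both ends. Write $B:=\mathcal{O}_{X,x}$; since $X$ is regular and $x\in X^{(1)}$, the ring $B$ is a discrete valuation ring with fraction field $K$ and residue field $k(x)$, and $\pi$ is a uniformizing parameter. The support map $\supp\colon \sper B\to \spec B$ carries $x_r=\sper k(x)$ onto the closed point of the two-point space $\spec B$ and its complement onto the generic point, so $x_r$ is closed in $\sper B$ with open complement $\sper K$ (the space of orderings of $K$, a Boolean space). By the construction of the complex $C^{\bullet}(X_r,A)$ from the codimension-of-support filtration (Proposition \ref{prop:complexReal}), the map $\partial$ is the $x$-component of the first differential, hence the connecting homomorphism
\begin{equation*}
\cdots\to H^{q}(\sper B,A)\xrightarrow{\rho}H^{q}(\sper K,A)\xrightarrow{\partial}H^{q+1}_{x_r}(\sper B,A)\to H^{q+1}(\sper B,A)\to\cdots
\end{equation*}
in the long exact sequence of real cohomology with supports attached to the pair $(\sper B,\sper K)$, with $\rho$ the restriction map.

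First I would show $H^{q}(\sper B,A)=0$ for $q\geq 1$. By Lemma \ref{lem:ZariskiCohomologySheafSuppZ} applied to the affine scheme $\spec B$, one has $H^{q}(\sper B,A)\simeq H^{q}_{Zar}(\spec B,\supp_{*}A)$. The only open subsets of $\spec B$ are $\emptyset$, the generic point, and $\spec B$ itself; on this space the stalk of any sheaf at the closed point equals its group of global sections, so the global sections functor is exact and $H^{q}_{Zar}(\spec B,-)$ vanishes for $q\geq 1$. Feeding this into the long exact sequence in degree $0$ shows that $\partial\colon H^{0}(\sper K,A)\to H^{1}_{x_r}(\sper B,A)$ is surjective with kernel the image of $\rho\colon H^{0}(\sper B,A)\to H^{0}(\sper K,A)$, so $\partial$ induces an isomorphism
\begin{equation*}
H^{0}(\sper K,A)\big/\rho\bigl(H^{0}(\sper B,A)\bigr)\ \xrightarrow{\ \simeq\ }\ H^{1}_{x_r}(\sper B,A).
\end{equation*}

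Next I would produce $\iota_{\pi}$. Recall from Definition \ref{def:Beta} that $\beta_{\pi}(s)(\xi)=s(\eta_{+}(\xi))-s(\eta_{-}(\xi))$, where $\eta_{\pm}(\xi)$ are the two orderings of $K$ inducing $\xi$, distinguished by $\sgn_{\eta_{\pm}}(\pi)=\pm 1$. If $s=\rho(\tilde{s})$ for a global section $\tilde{s}$ of $A$ on $\sper B$, then $\tilde{s}$ is locally constant and both $\eta_{+}(\xi)$ and $\eta_{-}(\xi)$ specialize to $\xi$ (every open neighborhood of $\xi$ in $\sper B$ contains them), whence $s(\eta_{+}(\xi))=\tilde{s}(\xi)=s(\eta_{-}(\xi))$ and $\beta_{\pi}(s)=0$. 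Thus $\beta_{\pi}$ kills $\rho(H^{0}(\sper B,A))$ and factors through the quotient above; composing with the displayed isomorphism yields a homomorphism $\iota_{\pi}\colon H^{1}_{x_r}(\sper B,A)\to H^{0}(x_r,A)$ satisfying $\iota_{\pi}(\partial)=\beta_{\pi}$ by construction.

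It then remains to prove that $\iota_{\pi}$ is bijective, \ie that the induced map $H^{0}(\sper K,A)/\rho(H^{0}(\sper B,A))\to H^{0}(x_r,A)$ is an isomorphism. Concretely this is two topological facts about $\sper B$ near $x_r$: that a locally constant function $s$ on $\sper K$ with $s(\eta_{+}(\xi))=s(\eta_{-}(\xi))$ for all $\xi\in x_r$ extends to a locally constant function on $\sper B$ (so that $\ker\beta_{\pi}=\rho(H^{0}(\sper B,A))$), and that every locally constant function on $x_r$ arises as $\beta_{\pi}(s)$ for some $s\in H^{0}(\sper K,A)$ (surjectivity). Both are established in Scheiderer's proof of \cite[2.6 Proposition]{Scheiderer95} in the case $A=\Z/2\Z$ — there the difference $s(\eta_{+}(\xi))-s(\eta_{-}(\xi))$ appears as the corresponding sum, the two being equal modulo $2$ — and that argument rests only on the symmetric roles of $\eta_{+}$ and $\eta_{-}$ over each $\xi$ together with the description of a neighborhood basis of $x_r$ in $\sper B$ recalled in Definition \ref{def:Beta}, so it carries over verbatim to an arbitrary abelian group $A$. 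The hard part is exactly this last point — reproducing Scheiderer's topological analysis with general coefficients; everything else is the homological bookkeeping above.
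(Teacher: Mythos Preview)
Your argument is correct and lands on the same identification as the paper, but the organization differs. The paper works at the sheaf level on $X'=\sper\mathcal{O}_{X,x}$: it compares the two exact sequences
\[
A\to j_*j^*A\to i_*R^1i^!A\to 0\qquad\text{and}\qquad A\to j_*j^*A\xrightarrow{\beta} i_*i^*A\to 0,
\]
the second being Scheiderer's \cite[Lemma 1.3]{Scheiderer95}, reads off an isomorphism of cokernel sheaves $\iota_\pi\colon i_*R^1i^!A\xrightarrow{\sim} i_*i^*A$, and then takes global sections. You instead work directly with the localization long exact sequence in cohomology, inserting the extra step $H^{q}(\sper B,A)=0$ for $q\geq 1$ (via Lemma~\ref{lem:ZariskiCohomologySheafSuppZ} and the locality of $\spec B$) to present $H^1_{x_r}(\sper B,A)$ as the quotient $H^0(\sper K,A)/\rho(H^0(\sper B,A))$, and only then invoke Scheiderer to see that $\beta_\pi$ induces an isomorphism on this quotient. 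Both routes rest on the same external input; the paper's sheaf-level comparison is slightly cleaner because it avoids the separate vanishing argument and packages surjectivity and the kernel description of $\beta_\pi$ into a single citation, whereas your version makes the role of the connecting map $\partial$ more transparent.
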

\begin{proof}
Let $X^{'}=\sper \mathcal{O}_{X,x}$, $Z^{'}=x_r$, let $i:Z^{'}\rightarrow X^{'}$ denote the inclusion, and let $j:\sper K \rightarrow X^{'}$ denote the inclusion of the complement to $Z^{'}$. 
For any abelian sheaf $A$ on $X^{'}$ the sequence
\begin{equation*}\label{eqn:R1sequence}
A\rightarrow j_{*}j^{*}A \rightarrow i_{*}R^1i^{!}(A) \rightarrow 0
\end{equation*} 
is exact (\cf \cite[8.2.4 Corollary, Chapter II]{Tamme}).
By \cite[Lemma 1.3]{Scheiderer95}, for any locally constant sheaf $A$ on $X^{'}$ the sequence
\begin{equation*}
A\rightarrow j_{*}j^{*}A \stackrel{\beta}{\rightarrow} i_{*}i^{*}A \rightarrow 0
\end{equation*}
is exact, where $\beta$ is defined on stalks $\zeta\in Z^{'}$ as $(\beta s)_{\zeta}=s(\eta_{+})-s(\eta_{-})\in A$. Hence we get an isomorphism $\iota_{\pi}$ of cokernels and a commutative diagram
\begin{equation}\label{eqn:DiagramOfLemma}
\xymatrix{
j_{*}j^{*}A(X^{'}) \ar^{\partial}[r] \ar^{\beta}[dr] & i_{*}R^1i^{!}A(X^{'}) \ar^{\iota_{\pi}}[d]\\
 & i_{*}i^{*}A(X^{'})
}
\end{equation}
Tracking down all the definitions, one finds that Diagram \eqref{eqn:DiagramOfLemma} is equal to the diagram below.
\begin{equation*}
\xymatrix{
H^0(X^{'}-Z^{'},A)\ar^{\partial}[r] \ar^{\beta_{\pi}}[dr] & H^1_{Z^{'}}(X^{'},A) \ar^{\iota_{\pi}}[d]\\
 & H^0(Z^{'},A)
}
\end{equation*}
where the vertical map is the isomorphism $\iota_{\pi}$ chosen, the diagonal map is the map $\beta_{\pi}$ of Definition \ref{def:Beta}, and $\sper K$ equals $X^{'}-Z^{'}$. This finishes the proof of the lemma.
\end{proof}

\begin{lemma}\label{lemma:KernelIsBeta}
Let $X$ be a regular excellent scheme which is integral with function field $K$ and let $A$ be a constant sheaf on $X_r$. For any $x\in X^{(1)}$, let $\pi_{x}$ be a choice of uniformizing parameter for $\mathcal{O}_{X,x}$. Then, for any open subscheme $U$ in $X$,
 \begin{equation*}
 \supp_{*}A(U)\simeq \textup{ker}( H^0(\sper K, A) \stackrel{\oplus\beta_{\pi}}{\rightarrow} \underset{x\in U^{(1)}}{\bigoplus} H^0(\sper k(x), A))
 \end{equation*}
where $\beta_{\pi}: H^0(\sper K, A) \rightarrow H^0(\sper k(x), A)$ is the map of Definition \ref{def:Beta}.
\end{lemma}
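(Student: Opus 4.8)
\emph{Proof plan.} The plan is to recognize $\supp_*A(U)$ as the degree‑zero real cohomology group $H^0(U_r,A)$ and then read that group off from Scheiderer's Gersten–type complex of Proposition~\ref{prop:complexReal}. First I would note that, directly from the definition of the direct image functor, $\supp_*A(U)=A(\supp^{-1}(U))=A(U_r)=H^0(U_r,A)$, using that $\supp^{-1}(U)=U_r$ and that $H^0=R^0\Gamma$ is just the sections functor (equivalently, this is Lemma~\ref{lem:ZariskiCohomologySheafSuppZ} in degree $p=0$). So it suffices to compute $H^0(U_r,A)$.

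Since $U$ is an open subscheme of the regular excellent integral scheme $X$, it is again regular, excellent, and integral with function field $K$, and the codimension of points is preserved, so $U^{(i)}=X^{(i)}\cap U$. Applying Proposition~\ref{prop:complexReal} to the scheme $U$ with $W=U_r$ (open and constructible in $U_r$) and $\F=A$, the group $H^0(U_r,A)$ is the kernel of the first differential
$$d^0\colon \bigoplus_{x\in U^{(0)}} H^0_x(U_r,A)\longrightarrow \bigoplus_{x\in U^{(1)}} H^1_x(U_r,A).$$
Because $X$ is integral, $U^{(0)}$ consists of the single generic point $\eta$, with $\mathcal{O}_{U,\eta}=K$ and $\eta_r=\sper K\subseteq U_r$, so $H^0_\eta(U_r,A)=H^0_{\sper K}(\sper K,A)=H^0(\sper K,A)$. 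For each $x\in U^{(1)}$ one has $\mathcal{O}_{X,x}=\mathcal{O}_{U,x}$, which is a discrete valuation ring, and $x_r\cap U_r=x_r$, $\sper\mathcal{O}_{X,x}\cap U_r=\sper\mathcal{O}_{X,x}$ (the canonical maps factor through $U_r$ since $x\in U$), hence $H^1_x(U_r,A)=H^1_{x_r}(\sper\mathcal{O}_{X,x},A)$.

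Finally I would invoke Lemma~\ref{lem:BetaResidue}, applied to $U$ for each $x\in U^{(1)}$ with the chosen uniformizer $\pi_x$: it furnishes an isomorphism $\iota_{\pi_x}\colon H^1_{x_r}(\sper\mathcal{O}_{X,x},A)\xrightarrow{\ \simeq\ } H^0(\sper k(x),A)$ under which the $x$‑component of $d^0$ — which is exactly the map $\partial$ of that lemma, being the first differential of $C^{\bullet}(U_r,A)$ — is carried to $\beta_{\pi_x}$. Assembling the $\iota_{\pi_x}$ over all $x\in U^{(1)}$ identifies $d^0$ with $\oplus\beta_{\pi_x}\colon H^0(\sper K,A)\to\bigoplus_{x\in U^{(1)}}H^0(\sper k(x),A)$, and therefore
$$\supp_*A(U)=H^0(U_r,A)=\ker\Bigl(H^0(\sper K,A)\xrightarrow{\ \oplus\beta_{\pi_x}\ }\bigoplus_{x\in U^{(1)}}H^0(\sper k(x),A)\Bigr),$$
which is the assertion. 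The steps are all routine assembly; the only points requiring care are the bookkeeping ones — verifying $\supp^{-1}(U)=U_r$ so the first reduction is legitimate, and checking that passing from $X$ to the open subscheme $U$ leaves the local terms $H^0_\eta$, $H^1_x$ and the first differential of Scheiderer's complex unchanged, which is the naturality in $W$ of Proposition~\ref{prop:complexReal} together with the fact that the differential is built locally at the codimension‑one points. I do not anticipate a genuine obstacle here.
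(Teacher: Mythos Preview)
Your proposal is correct and follows essentially the same approach as the paper: identify $\supp_*A(U)$ with $H^0(U_r,A)$ via Lemma~\ref{lem:ZariskiCohomologySheafSuppZ}, compute this as the kernel of the first differential in Scheiderer's complex from Proposition~\ref{prop:complexReal}, and then use the isomorphisms $\iota_{\pi_x}$ of Lemma~\ref{lem:BetaResidue} to rewrite that differential as $\oplus\beta_{\pi_x}$. The paper's proof is a terse two-sentence version of exactly this; you have simply spelled out the bookkeeping (identifying the degree-zero and degree-one terms of the complex, and noting that the local data at $x\in U^{(1)}$ agree for $U$ and $X$) more carefully.
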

\begin{proof}
We may choose isomorphisms $\iota_{\pi}$ for each $x\in X^{(1)}$ as in Lemma \ref{lem:BetaResidue}. Using Proposition \ref{prop:complexReal} and Lemma \ref{lem:ZariskiCohomologySheafSuppZ} we obtain, for every open $U$, an isomorphism of kernels from $\supp_{*}A(U)$ to $\textup{ker}( H^0(\sper K, A) \stackrel{\oplus\beta_{\pi}}{\rightarrow} \underset{x\in U^{(1)}}{\bigoplus} H^0(\sper k(x), A))$.
\end{proof}

\subsection{Commutativity of residues}
\begin{proposition}\label{prop:FundamentalIdealResidueCommute}
Let $X$ be an excellent, integral, noetherian, regular, separated, $F$-scheme and $j\geq 0$ an integer. For every open subscheme $U$ in $X$:
\begin{enumerate}[(i)]
\item the diagram below commutes
\begin{equation}\label{eqn:CommutesResidue}
\xymatrix{  
I^j\left(K\right) \ar^{\oplus \partial_{\pi}}[r] \ar^{\sign}[d] & \underset{x\in U^{(1)}}{\bigoplus} I^{j-1}\left(k\left(x\right)\right) \ar^{2\sign}[d]\\
H^0(\sper K, 2^jZ) \ar^{\oplus\beta_{\pi}}[r] & \underset{x\in U^{(1)}}{\bigoplus} H^0(\sper k(x), 2^j\Z)
}
\end{equation} 
where the maps $\oplus\partial_{\pi}$ and $\oplus\beta_{\pi}$ are the maps of Lemmas \ref{lem:KernelIjresidue} and \ref{lemma:KernelIsBeta};
\item the signature $\sign:I^j(K)\rightarrow H^0(\sper K,2^j\Z)$ induces a map of kernels $$\I^j(U)\rightarrow \supp_{*}2^j\Z(U)$$.
\end{enumerate}
\end{proposition}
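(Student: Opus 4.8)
The plan is to reduce the entire statement to a single computation attached to one discrete valuation ring and one Pfister form. First, part (ii) follows formally from part (i): if $q\in\I^j(U)$, then $\oplus\partial_\pi(q)=0$ by Lemma \ref{lem:KernelIjresidue}, so the commutativity of \eqref{eqn:CommutesResidue} forces $\oplus\beta_\pi(\sign(q))=0$, and by Lemma \ref{lemma:KernelIsBeta} applied with $A=2^j\Z$ this says precisely that $\sign(q)\in\supp_{*}2^j\Z(U)$; the identification respects restriction maps, so $\sign$ restricts to a morphism $\I^j\rightarrow\supp_{*}2^j\Z$. Hence it is enough to prove (i). Since every map occurring in \eqref{eqn:CommutesResidue} is a direct sum over $x\in U^{(1)}$ of maps attached to the discrete valuation ring $B:=\mathcal{O}_{X,x}$ (with uniformizer $\pi$, residue field $k(x)$, fraction field $K$), I fix such an $x$ and must check $\beta_\pi\circ\sign=2\sign\circ\partial_\pi$ as maps $I^j(K)\rightarrow H^0(\sper k(x),2^j\Z)$. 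Both composites are additive, so by the description of $I^j(K)$ in Section \ref{subsect:SignatureAndI^n} it suffices to test on one $j$-fold Pfister form $\rho=\langle\langle a_1,\dots,a_j\rangle\rangle$ when $j\geq1$ (the case $j=0$ is the combination of Lemma \ref{lem:residueWittOnElements} and Lemma \ref{lem:BetaMorphism} on rank-one forms). If $\sper k(x)=\emptyset$ both sides vanish, so assume $k(x)$ carries an ordering $\xi$, and recall from Definition \ref{def:Beta} that the two orderings $\eta_{+},\eta_{-}$ of $K$ inducing $\xi$ satisfy $\sgn_{\eta_{\pm}}(\pi)=\pm1$ and agree with $\sgn_\xi$ on units of $B$.

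Next I would normalize $\rho$. By Lemma \ref{lem:residueWittOnElements}(i), modulo squares each slot is $b_i\pi^{\epsilon_i}$ with $b_i$ a unit and $\epsilon_i\in\{0,1\}$, and a Pfister form depends only on its slots modulo squares; using $\langle\langle x,y\rangle\rangle\simeq\langle\langle x,-xy\rangle\rangle$ one replaces any two non-unit slots $b_i\pi,\,b_{i'}\pi$ by $b_i\pi,\,-b_ib_{i'}$, the latter a unit, so after iterating $\rho$ is either $\langle\langle u_1,\dots,u_j\rangle\rangle$ with all $u_i$ units, or $\tau\otimes\langle\langle v\pi\rangle\rangle$ with $\tau=\langle\langle u_1,\dots,u_{j-1}\rangle\rangle$ and $u_i,v$ units. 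In the first case $\rho$ comes from $W(B)$, so $\partial_\pi\rho=0$, while by Lemma \ref{lem:SignPfister}(iii) the values $\sign_{\eta_{+}}\rho$ and $\sign_{\eta_{-}}\rho$ depend only on the signs $\sgn_\xi(\bar u_i)$ and hence coincide, so both sides are $0$. In the second case, Lemma \ref{lem:residueWittOnElements}(ii) together with the $W(B)$-linearity of $\partial_\pi$ and the fact that $\partial_\pi$ sends $\langle\pi\rangle$ times a form defined over $B$ to its reduction modulo $\m$ gives $\partial_\pi\rho=\langle-\bar v\rangle\otimes\langle\langle\bar u_1,\dots,\bar u_{j-1}\rangle\rangle$; then Lemma \ref{lem:SignPfister} evaluates $2\sign(\partial_\pi\rho)$ as $-2^{j}\sgn_\xi(\bar v)\,\ind_{\{\bar u_1<0,\dots,\bar u_{j-1}<0\}}$, and on the other side $\sgn_{\eta_{\pm}}(v\pi)=\pm\sgn_\xi(\bar v)$ shows, via Lemma \ref{lem:SignPfister}(iii), that $\sign_{\eta_{+}}\rho-\sign_{\eta_{-}}\rho=2^{j}\ind_{\{\bar u_i<0\ \forall i\}}\bigl(\ind_{\{\bar v<0\}}-\ind_{\{\bar v>0\}}\bigr)$ equals the same expression. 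This proves (i), and with it the proposition.

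The evaluation of the two indicator functions at the end is routine; the step that needs genuine care is the normalization of $\rho$ together with the auxiliary facts about the second residue used in it, namely that $\partial_\pi$ vanishes on forms defined over $B$ and that $\partial_\pi(\langle\pi\rangle\cdot(\text{form over }B))$ is the reduction mod $\m$, both of which I would extract from Lemma \ref{lem:residueWittOnElements} and the standard $W(B)$-module structure of the second residue. An alternative that bypasses Pfister generators is to invoke the split exactness of $0\rightarrow W(B)\rightarrow W(K)\stackrel{\partial_\pi}{\rightarrow}W(k(x))\rightarrow0$ and Gille's compatibility of $\partial_\pi$ with the fundamental ideal filtration to write a general $q\in I^j(K)$ as $\alpha+\langle\pi\rangle\beta$ with $\alpha,\beta$ coming from $B$, and then specialize signatures directly along the orderings $\eta_{\pm}$; this makes the identity $\beta_\pi(\sign q)(\xi)=2\sign_\xi(\partial_\pi q)$ transparent but requires the additional structural input.
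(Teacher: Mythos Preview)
Your proof is correct, but it takes a noticeably longer route than the paper's. The paper observes that the identity $\beta_\pi\circ\sign=2\sign\circ\partial_\pi$ in question is between two additive maps defined on all of $W(K)$, not just on $I^j(K)$; since $I^j(K)\subset W(K)$, it therefore suffices to check commutativity on the rank-one generators $\langle c\rangle$ with $c=b\pi^n$, $n\in\{0,1\}$, of $W(K)$. This is precisely the content of Lemma~\ref{lem:residueWittOnElements}(ii) together with Lemma~\ref{lem:BetaMorphism}, and the verification is two lines. You in fact already state this as the case $j=0$, but then do not notice that it settles every $j$ by restriction; instead you pass to Pfister generators of $I^j(K)$, normalize them via $\langle\langle x,y\rangle\rangle\simeq\langle\langle x,-xy\rangle\rangle$ to isolate at most one slot with $\pi$, and compute both sides case by case. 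That argument is fine and the computations are right, but it is redundant: once the equality holds on rank-one forms it holds on $W(K)$ by additivity and hence on $I^j(K)$ automatically. Your alternative sketch at the end (split $q=\alpha+\langle\pi\rangle\beta$ with $\alpha,\beta$ over $B$) is closer in spirit to the paper's argument, and indeed reduces to the same two-line check. Part~(ii) is handled identically in both proofs.
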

\begin{proof}
To prove $(i)$, recall (Lemma \ref{lem:residueWittOnElements} $(i)$) that $W(K)$ is generated by rank one forms  $\langle c \rangle$, where $c=b\pi^{n}$, $b$ is a unit in $\mathcal{O}_{X,x}$, and either $n=0$ or $n=1$. Hence, it suffices to check commutativity on such forms. Using Lemma \ref{lem:BetaMorphism} we find that for each $x\in U^{(1)}$, 
\begin{equation*}
\beta_{\pi}(\sign(\langle c \rangle))=
\begin{cases}
2\sign(\langle\overline{b}\rangle) & \textup{ if $n$ is 1} \\
0 & \textup{ if $n$ is 0}
\end{cases}
\end{equation*}
On the other hand, using Lemma \ref{lem:residueWittOnElements} $(ii)$ we find that 
\begin{equation*}
\partial_{\pi}(\langle c\rangle)=
\begin{cases}
\langle \overline{b}\rangle & \textup{ if $n=1$} \\
0 & \textup{ if $n=0$}
\end{cases}
\end{equation*}
which finishes the proof of the $(i)$.

To prove $(ii)$, consider Diagram \eqref{eqn:CommutesResidue}. We may identify the kernel of the upper horizontal map with $\I^j(U)$ using Lemma \ref{lem:KernelIjresidue}, and we identify the kernel of the lower horizontal map with $\supp_{*}2^n\Z(U)$ using Lemma \ref{lemma:KernelIsBeta}.
\end{proof}

\subsection{Sheaves and inverting 2}
\begin{lemma}\label{lem:InvertingAnIntegerIsColimit}
Let $R$ be a ring and $M$ an $R$-module. For any element $r\in R$, the localization $M[\nicefrac{1}{r}]$ of the $R$-module $M$ with respect to the multiplicative set $S=\{1,r,r^2,r^3,\ldots\}$ is equal as an $R$-module to the direct limit $\varinjlim(M\stackrel{r}{\rightarrow}M\stackrel{r}{\rightarrow}\cdots)$.
\end{lemma}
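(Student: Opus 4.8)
This is the standard identification of a localization at a single element with a sequential colimit, so the proof is a matter of matching up universal properties (or, equivalently, exhibiting mutually inverse maps). I will give the universal property argument, as it avoids fussing with the two equivalence relations directly.

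First I would recall the relevant universal property of $M[\nicefrac{1}{r}]$: writing $S=\{1,r,r^2,\ldots\}$, the $R$-module $S^{-1}M=M[\nicefrac{1}{r}]$ together with the canonical map $\lambda\colon M\to M[\nicefrac{1}{r}]$, $m\mapsto m/1$, is initial among pairs $(N,f)$ where $N$ is an $R$-module on which multiplication by $r$ is bijective and $f\colon M\to N$ is an $R$-module homomorphism: given such $(N,f)$, the unique factorization is $m/r^n\mapsto (r|_N)^{-n}f(m)$. Next I would check that the colimit $L:=\varinjlim(M\xrightarrow{r}M\xrightarrow{r}\cdots)$ carries the same universal property. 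Multiplication by $r$ on $L$ is bijective: surjectivity because $r$ acts as the transition map composed with the shift, and injectivity by the usual description of elements of a filtered colimit (if $r\cdot m$ dies in $L$ at stage $n$, then $r^{k}\cdot rm=0$ in $M$ for some $k$, so $m$ already dies at stage $n$). Let $\iota\colon M\to L$ be the structure map of the first term. Then for any $(N,f)$ with $r$ invertible on $N$, the maps $M\to N$, $m\mapsto (r|_N)^{-n}f(m)$ at stage $n$ are compatible with the transition maps $r$, hence induce a unique $L\to N$ with $\iota$; and any map $L\to N$ is determined by its restriction to the stages, so uniqueness holds. Therefore $(L,\iota)$ and $(M[\nicefrac{1}{r}],\lambda)$ are both initial in the same category, so there is a unique $R$-module isomorphism between them compatible with $\iota$ and $\lambda$; concretely it sends the class of $m\in M$ at stage $n$ to $m/r^{n}$.

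If one prefers to avoid the universal property language, the alternative is to write the two maps explicitly — $L\to M[\nicefrac{1}{r}]$ sending the stage-$n$ class of $m$ to $m/r^n$, and $M[\nicefrac{1}{r}]\to L$ sending $m/r^n$ to the stage-$n$ class of $m$ — verify each is well defined (using that the stage-$n$ class of $m$ equals the stage-$n'$ class of $m'$ iff $r^{\ell}(r^{n'-n}m-m')=0$ for some $\ell$ when $n\le n'$, which is exactly the condition $m/r^n=m'/r^{n'}$ in $S^{-1}M$) and $R$-linear, and note they are mutually inverse.

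There is no real obstacle here; the only point requiring a little care is the bookkeeping in the well-definedness checks, namely that the filtered-colimit equivalence relation on $\coprod_n M$ and the localization equivalence relation on $M\times S$ translate into one another. Under the universal-property approach this is subsumed into verifying that $r$ acts invertibly on $L$, which is the one genuinely non-formal line and follows from the standard description of when an element of a filtered colimit vanishes.
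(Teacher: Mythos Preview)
Your proof is correct. The paper's own proof is a single sentence: it appeals to the direct-sum construction of the colimit and observes that the relations defining the quotient are precisely the relations defining $M[\nicefrac{1}{r}]$. This is exactly the alternative you sketch in your second paragraph. Your primary argument via universal properties is a genuinely different (and arguably cleaner) route: rather than matching equivalence relations by hand, you characterize both objects as initial among $R$-modules receiving a map from $M$ on which $r$ acts invertibly. The trade-off is that the paper's approach is shorter and entirely concrete, while yours isolates the one substantive point (that $r$ acts bijectively on the colimit) and makes the rest formal; either is entirely adequate for this standard fact.
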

\begin{proof}
Using the direct sum construction of the direct limit $\varinjlim(M\stackrel{r}{\rightarrow}M\stackrel{r}{\rightarrow}\cdots)$, the relations one finds are the same as the relations defining the localization $M[\nicefrac{1}{r}]$.
\end{proof}

The following Lemma recalls well-known facts on Grothendieck topologies, \cf \cite[3.2.3 i) and Theorem 3.11.1]{Tamme} and \cite[Proposition 3.2]{S94}.
\begin{lemma}\label{lem:ChangeOfCoefficients}
Let $X$ be a scheme and let $X_t$ be either the real site $X_r$ or the Zariski site $X_{Zar}$. Then:
\begin{enumerate}[(i)]
\item direct limits of sheaves $\varinjlim(\F_i)$ exist in $\textup{Ab}(X_t)$ and equal the sheaf associated to the presheaf $U\mapsto \varinjlim \F_i(U)$;
\item if $X$ is noetherian, then for any $p\geq 0$ and any open $U$ in $X_t$, the canonical morphism $\F_i\rightarrow \varinjlim\F_i$ induces an isomorphism
$$ \varinjlim H^p(U_t,\F_i)\rightarrow H^p(U_t,\varinjlim\F_i)$$
of groups.
\end{enumerate}
\end{lemma}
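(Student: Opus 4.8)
The plan is to prove (i) by a formal adjunction argument and (ii) by the classical flasque-resolution argument, putting the real-site case on the same footing as the Zariski one via the fact that the real site is the site of opens of a spectral space. For (i), I would use that, $X_t$ being a small site, the inclusion $\iota$ of $\textup{Ab}(X_t)$ into the category of abelian presheaves on $X_t$ admits an exact left adjoint $a$, the associated-sheaf functor. Colimits of presheaves are computed objectwise, so $\bigl(\varinjlim\iota\F_i\bigr)(U)=\varinjlim\F_i(U)$; since a left adjoint preserves colimits and $a\iota\cong\textup{id}$ on sheaves, the sheaf $a\bigl(U\mapsto\varinjlim\F_i(U)\bigr)$ equipped with the evident maps from the $\F_i$ is the colimit of $(\F_i)$ in $\textup{Ab}(X_t)$. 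Equivalently, one checks the universal property directly: for a sheaf $\mathcal{G}$, $\textup{Hom}_{\textup{Ab}(X_t)}\bigl(a(\varinjlim\iota\F_i),\mathcal{G}\bigr)\cong\textup{Hom}_{\textup{presh}}\bigl(\varinjlim\iota\F_i,\iota\mathcal{G}\bigr)\cong\varprojlim\textup{Hom}(\F_i,\mathcal{G})$, the last isomorphism by full faithfulness of $\iota$. As a by-product, for filtered $I$ the functor $\varinjlim\colon\textup{Ab}(X_t)^I\to\textup{Ab}(X_t)$ is exact, being the composite of the exact objectwise colimit with the exact functor $a$; I will use this in (ii).

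For (ii), the substantive inputs, all under the hypothesis that $X$ is noetherian, are the following. \textbf{(a)} $X_t$ is coherent, with a basis of quasi-compact, quasi-separated objects stable under intersection; in particular the whole space, and every open $U$ we need, is quasi-compact and quasi-separated. For $X_{Zar}$ this is immediate, a noetherian topological space having all opens quasi-compact; for $X_r$ it is part of Scheiderer's study of the real site (\cite[Proposition 3.2]{S94} and its surroundings), where $X_r$ is seen to be a spectral space whose quasi-compact opens form a basis. \textbf{(b)} Over a quasi-compact quasi-separated $U$, the functor $\Gamma(U,-)$ commutes with filtered colimits of sheaves: the presheaf $V\mapsto\varinjlim\F_i(V)$ already satisfies the sheaf condition on such $U$, since the gluing axiom for a finite cover is a finite limit, finite limits commute with filtered colimits once the pairwise intersections are quasi-compact, and quasi-separatedness provides the latter. \textbf{(c)} Since $\textup{Ab}(X_t)$ is the category of abelian sheaves on the topological space $X$ (respectively $X_r$), stalks give a conservative family of points and the Godement construction yields a functorial flasque resolution $\F\rightsquigarrow\mathcal{G}^\bullet(\F)$; moreover a filtered colimit of flasque sheaves is flasque, by (a), (b), and the fact that filtered colimits preserve surjections.

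Granting these, I would finish (ii) as follows. Applying $\varinjlim$ to the Godement resolutions $\F_i\to\mathcal{G}^\bullet(\F_i)$ and using exactness of filtered colimits gives a flasque, hence $\Gamma(U,-)$-acyclic, resolution $\varinjlim\F_i\to\varinjlim\mathcal{G}^\bullet(\F_i)$. Therefore, using (b) to pull $\Gamma(U,-)$ through the colimit and the exactness of filtered colimits of abelian groups to pull the colimit through the cohomology of a complex, one obtains
\[
H^p(U_t,\varinjlim\F_i)=H^p\bigl(\Gamma(U,\varinjlim\mathcal{G}^\bullet(\F_i))\bigr)=\varinjlim H^p\bigl(\Gamma(U,\mathcal{G}^\bullet(\F_i))\bigr)=\varinjlim H^p(U_t,\F_i),
\]
and functoriality of the Godement resolution identifies this isomorphism with the one induced by the canonical maps $\F_i\to\varinjlim\F_i$. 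The one step that is not pure formalism is the coherence and spectral-space structure of the real site underlying \textbf{(a)}--\textbf{(c)}, where one cannot invoke ``noetherian topological space'' and must instead rely on the foundations in \cite{S94}; this is precisely why the statement is offered as a recollection of known facts rather than proved from scratch.
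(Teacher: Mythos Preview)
The paper does not prove this lemma; it records it as a well-known fact with pointers to Tamme (for the general topos-theoretic statements) and to \cite[Proposition~3.2]{S94} (for the real-site input). Your sketch is exactly the standard argument one finds behind those citations---sheafification as a left adjoint for (i), and the coherent-topos/Godement-resolution argument for (ii)---so substantively there is nothing to compare.

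One point deserves to be made precise. In step \textbf{(a)} you write that ``every open $U$ we need'' is quasi-compact. For the Zariski site on a noetherian $X$ this holds for \emph{every} open, but on the real site it does not: $X_r$ is spectral but typically not noetherian, and it has many non-quasi-compact opens. For instance, in $\sper\R[t]$ the set $V=\bigcup_{n\in\Z}\{\,n-\tfrac{1}{2}<t<n+\tfrac{1}{2}\,\}$ is an infinite disjoint union of nonempty opens; taking $\F_i=\Z$ with transition maps multiplication by~$2$, the left side of (ii) for $p=0$ is $\bigl(\prod_n H^0(V_n,\Z)\bigr)[\nicefrac{1}{2}]$ while the right side is $\prod_n H^0(V_n,\Zh)$, and these differ (unbounded denominators). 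So the lemma as literally stated is a little too generous on the real site. What your argument genuinely establishes---and what every later use in the paper requires---is the case of quasi-compact $U$, in particular $U=V_r$ for $V$ a Zariski open in $X$. Your hedge is thus the right instinct; it should simply be stated as an explicit restriction rather than left implicit.
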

Combining Lemmas \ref{lem:InvertingAnIntegerIsColimit} and \ref{lem:ChangeOfCoefficients}, we have the following lemma.
\begin{lemma}\label{lem:Inverting2CommutesWithCohomology}
Let $X$ be a scheme, let $X_t$ be either the real site $X_r$ or the Zariski site $X_{Zar}$, and let $\F\in \textup{Ab}(X_t)$. Let $\F[\nicefrac{1}{2}]$ denote the sheaf associated to the presheaf $U\mapsto \F(U)[\nicefrac{1}{2}]$. Then:
\begin{enumerate}[(i)]
\item the sheaf $\F[\nicefrac{1}{2}]$ equals $\varinjlim(\F\stackrel{2}{\rightarrow}\F\stackrel{2}{\rightarrow}\cdots)$;
\item if $X$ is noetherian, then for any $p\geq 0$ and any open $U$ in $X_t$, the canonical morphisms $\F\rightarrow \varinjlim(\F\stackrel{2}{\rightarrow}\F\stackrel{2}{\rightarrow}\cdots)$ induce an isomorphism
$$H^p(U_t,\F)[\nicefrac{1}{2}]\stackrel{\simeq}{\rightarrow} H^p(U_t,\F[\nicefrac{1}{2}])$$
of groups.
\end{enumerate}
\end{lemma}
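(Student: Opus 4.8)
The plan is to prove both parts of Lemma \ref{lem:Inverting2CommutesWithCohomology} by reducing them to the two preceding lemmas, which do all the real work. For part $(i)$, I would first apply Lemma \ref{lem:InvertingAnIntegerIsColimit} with $R=\Z$ and $r=2$ to the abelian group $\F(U)$ for each open $U$ in $X_t$, obtaining $\F(U)[\nicefrac{1}{2}]=\varinjlim(\F(U)\stackrel{2}{\to}\F(U)\stackrel{2}{\to}\cdots)$ as abelian groups (and this identification is visibly natural in $U$, since multiplication by $2$ commutes with the restriction maps of $\F$). Thus the presheaf $U\mapsto \F(U)[\nicefrac{1}{2}]$ coincides with the presheaf $U\mapsto \varinjlim(\F(U)\stackrel{2}{\to}\F(U)\stackrel{2}{\to}\cdots)$. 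Sheafifying, the left side is by definition $\F[\nicefrac{1}{2}]$, while by Lemma \ref{lem:ChangeOfCoefficients}$(i)$ (applied to the constant tower $\F_i=\F$ with transition maps multiplication by $2$) the right side is $\varinjlim(\F\stackrel{2}{\to}\F\stackrel{2}{\to}\cdots)$. This gives $(i)$.

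For part $(ii)$, assume $X$ is noetherian, fix $p\geq 0$ and an open $U$ in $X_t$. By Lemma \ref{lem:InvertingAnIntegerIsColimit} again (now with $M=H^p(U_t,\F)$, an abelian group, $r=2$) we have $H^p(U_t,\F)[\nicefrac{1}{2}]=\varinjlim(H^p(U_t,\F)\stackrel{2}{\to}H^p(U_t,\F)\stackrel{2}{\to}\cdots)$. On the other hand, by part $(i)$ we know $\F[\nicefrac{1}{2}]=\varinjlim(\F\stackrel{2}{\to}\F\stackrel{2}{\to}\cdots)$, so Lemma \ref{lem:ChangeOfCoefficients}$(ii)$ (which requires $X$ noetherian — this is where that hypothesis is used) gives a canonical isomorphism $\varinjlim H^p(U_t,\F)\stackrel{\simeq}{\to}H^p(U_t,\F[\nicefrac{1}{2}])$, where the colimit on the left is again over the tower with transition maps multiplication by $2$, since the functor $H^p(U_t,-)$ is additive and hence sends multiplication by $2$ on $\F$ to multiplication by $2$ on $H^p(U_t,\F)$. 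Composing these two identifications yields the desired isomorphism $H^p(U_t,\F)[\nicefrac{1}{2}]\stackrel{\simeq}{\to}H^p(U_t,\F[\nicefrac{1}{2}])$, and chasing definitions shows it is induced by the canonical maps $\F\to\varinjlim(\F\stackrel{2}{\to}\F\stackrel{2}{\to}\cdots)$ as claimed.

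The only point requiring a little care — and the nearest thing to an obstacle — is the compatibility of the two colimit descriptions: one must check that the transition map on cohomology induced by "multiplication by $2$ on $\F$" is genuinely multiplication by $2$ on $H^p(U_t,\F)$, so that the tower appearing after applying $H^p(U_t,-)$ to the tower defining $\F[\nicefrac{1}{2}]$ is the same tower used in Lemma \ref{lem:InvertingAnIntegerIsColimit}. This is immediate from additivity of the derived functors $H^p(U_t,-)$, but it is the step that makes the whole argument go through. Everything else is a formal splicing together of Lemmas \ref{lem:InvertingAnIntegerIsColimit} and \ref{lem:ChangeOfCoefficients}, with the noetherian hypothesis invoked precisely to permit the interchange of colimit and cohomology in part $(ii)$.
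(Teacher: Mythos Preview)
Your proposal is correct and follows exactly the approach the paper indicates: the paper states this lemma immediately after Lemmas \ref{lem:InvertingAnIntegerIsColimit} and \ref{lem:ChangeOfCoefficients} with the single sentence ``Combining Lemmas \ref{lem:InvertingAnIntegerIsColimit} and \ref{lem:ChangeOfCoefficients}, we have the following lemma,'' and gives no further argument. Your write-up simply spells out this combination in detail, including the additivity check that multiplication by $2$ on $\F$ induces multiplication by $2$ on cohomology.
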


\section{On the global signature}
\label{sec:SignSheaves}
\subsection{The global signature}
Let $X$ be a scheme. Recall that the underlying set of the real spectrum $X_r$ consists of pairs $(x,P)$, where $x \in X$ and $P$ is an ordering on the residue field $k(x)$. The \emph{global signature} is a ring homomorphism
\begin{equation*}
\sign:W(X)\rightarrow H^0(X_r,\Z)
\end{equation*}
from the Witt ring of symmetric bilinear forms over $X$ to the ring of continuous integer valued functions on $X_r$. It assigns an isometry class $[\phi]$ of a symmetric bilinear form $\phi$ over $X$ to the function on $X_r$ defined by $$\sign([\phi])(x,P):=\sign_P([i^{*}_x\phi])$$, where $i_x:x\rightarrow X$ is any point and $P$ is any ordering on $k(x)$. 

The most important result on the global signature was proved by L. Mah\'e. He proved that if $X=\spec A$ is affine, then the cokernel of the global signature is 2-primary torsion \cite[Th\'eor\`eme 3.2]{Mahe}. Equivalently, after inverting 2, \ie localizing with respect to the multiplicative set $S=\{1, 2,2^2,2^4,\cdots\}$, the global signature induces a surjection  
\begin{equation*}
\sign:W(A)[1/2] \twoheadrightarrow H^0(\sper A, \Z)[1/2] 
\end{equation*}
of rings. For $A$ a field this was well-known (\cf \cite[p.34, Theorem 3.4]{Lam})

When $A$ is a connected ring and $\sper A\neq \emptyset$, the kernel of the global signature is the nilradical in the Witt ring \cite[Section 1.3]{Mahe}. When $A$ is a connected local ring, either the nilradical is 2-primary torsion or the entire Witt ring $W(A)$ is 2-primary torsion \cite[Theorem 1.2]{KnebuschLocal}, and in either case the kernel is 2-primary torsion. When $A$ is a field, these statements on the kernel are Pfister's local-global principle \cite[Satz 22]{Pfister}.

\subsection{The global signature as a morphism of sheaves}
Let $X$ be a scheme. Recall that $\W$ denotes the Zariski sheaf on $X$ associated to the presheaf $U\mapsto W(U)$ and that we may identify $\supp_{*}\Z$ with the sheaf $U\mapsto H^0(U_r,\Z)$ (Lemma \ref{lem:ZariskiCohomologySheafSuppZ}). Hence the global signature induces a morphism 
\begin{equation*}
\shfsign:\W\rightarrow \supp_{*}\Z 
\end{equation*}
of Zariski sheaves on $X$ which we call the \emph{global signature morphism of sheaves}.

While the global signature is not in general an isomorphism after inverting 2, for instance, see Corollary \ref{cor:OddTorsionDerived} in Section \ref{subsect:AtiyahHirz}, the following theorem proves that the global signature morphism of sheaves is. Recall that if $X$ is a noetherian scheme, then for any Zariski sheaf $\F$ on $X$ the presheaf $U\mapsto \F(U)[\nicefrac{1}{2}]$ is a sheaf (Lemma \ref{lem:Inverting2CommutesWithCohomology} $(ii)$) that we denote by $\F[\nicefrac{1}{2}]$.
\begin{theorem}\label{thm:0}
Let $X$ be a regular, noetherian scheme. After inverting 2, the global signature morphism of sheaves $\mathcal{S}ign$ induces an isomorphism
\begin{equation*}
\shfsign:\W[\nicefrac{1}{2}]\stackrel{\simeq}{\rightarrow} \supp_{*}\Zh 
\end{equation*}
of Zariski sheaves on $X$ and an isomorphism
\begin{equation*}
H^p_{Zar}(X, \W[\nicefrac{1}{2}]) \stackrel{\simeq}{\rightarrow} H^p(X_r,\Zh)
\end{equation*}
of cohomology groups.
\end{theorem}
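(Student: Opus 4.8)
The plan is to reduce to a stalk-wise (or rather local-ring-wise) statement, since a morphism of Zariski sheaves is an isomorphism precisely when it is so on stalks. At a point $x\in X$, the stalk of $\W$ is $\varinjlim_{U\ni x}\W(U)$; since $X$ is regular and noetherian, the Gersten resolution (Lemma \ref{lem:WittSheafEqualsKernel}) identifies $\W(U)$ with a subgroup of $W(K)$ cut out by second residues, and passing to the colimit over affine neighborhoods of $x$ gives that the stalk is $W(\mathcal{O}_{X,x})$. Similarly, Lemma \ref{lemma:KernelIsBeta} (applied with $A=\Z$, after noting that $X$ regular noetherian is excellent so Proposition \ref{prop:complexReal} applies) identifies $\supp_*\Z$ with the subsheaf of $H^0(\sper K,\Z)$ cut out by the maps $\beta_\pi$, so its stalk at $x$ is $H^0(\sper\mathcal{O}_{X,x},\Z)$. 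Inverting $2$ commutes with taking stalks and with these kernel descriptions (Lemma \ref{lem:Inverting2CommutesWithCohomology}), so it suffices to show that for each local ring $\mathcal{O}_{X,x}$ the map $\sign:W(\mathcal{O}_{X,x})[\nicefrac12]\to H^0(\sper\mathcal{O}_{X,x},\Z)[\nicefrac12]$ is an isomorphism.

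For that local statement, surjectivity is exactly Mah\'e's theorem (\cite[Th\'eor\`eme 3.2]{Mahe}), since $\mathcal{O}_{X,x}=\spec$ of a local, hence connected, ring — indeed after inverting $2$ the global signature of an affine scheme is onto. For injectivity, over a connected local ring the kernel of the global signature is $2$-primary torsion by \cite[Theorem 1.2]{KnebuschLocal} (either the nilradical is $2$-primary torsion, or all of $W$ is), so it dies upon inverting $2$. This gives the sheaf isomorphism $\shfsign:\W[\nicefrac12]\xrightarrow{\simeq}\supp_*\Zh$. Alternatively — and this is cleaner, avoiding the direct appeal to Mah\'e — one can instead prove the stalk statement via the fundamental-ideal filtration: Proposition \ref{prop:ColimitI^n} (Arason--Knebusch) identifies $\varinjlim(W(K)\xrightarrow{2}I(K)\xrightarrow{2}\cdots)$ with $H^0(\sper K,\Z)$, and Proposition \ref{prop:FundamentalIdealResidueCommute} shows the signature is compatible with residues, so taking the colimit of the kernel descriptions in Lemmas \ref{lem:KernelIjresidue} and \ref{lemma:KernelIsBeta} and using $\W[\nicefrac12]=\varinjlim(\W\xrightarrow{2}\I^1\xrightarrow{2}\cdots)$ at the level of sheaves yields the isomorphism directly. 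I would present whichever is shorter; the Arason--Knebusch route has the advantage of also setting up the later Theorem on $\I^n$.

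The cohomology isomorphism is then formal. Applying $\supp_*$ to the sheaf isomorphism and using that $\supp_*$ is exact with vanishing higher direct images (\cite[Theorem 19.2]{S94}), Lemma \ref{lem:ZariskiCohomologySheafSuppZ} gives $H^p_{Zar}(X,\supp_*\Zh)\simeq H^p(X_r,\Zh)$; combined with the sheaf isomorphism $\W[\nicefrac12]\simeq\supp_*\Zh$ this produces $H^p_{Zar}(X,\W[\nicefrac12])\simeq H^p(X_r,\Zh)$. (If one prefers, one can also invoke Lemma \ref{lem:Inverting2CommutesWithCohomology}(ii) to write $H^p_{Zar}(X,\W[\nicefrac12])\simeq H^p_{Zar}(X,\W)[\nicefrac12]$ first, but this is not needed.) The main obstacle is really the first paragraph: correctly identifying the stalks of both $\W$ and $\supp_*\Z$ as kernels over the local ring and checking that these identifications survive inverting $2$ — in particular, being careful that the excellence and regularity hypotheses are what make Scheiderer's Gersten-type resolution available, so that Lemma \ref{lemma:KernelIsBeta} can be invoked for the real side. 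Once the two sheaves are presented as kernels of the ``same'' diagram (via Proposition \ref{prop:FundamentalIdealResidueCommute}), the isomorphism drops out of the local input from Mah\'e and Knebusch (or from Arason--Knebusch).
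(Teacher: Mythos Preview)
Your core strategy---reduce to stalks, then use Mah\'e's theorem for surjectivity and Knebusch's result on connected local rings for injectivity---is exactly what the paper does. But you take an unnecessary detour through the Gersten-type resolutions (Lemmas \ref{lem:WittSheafEqualsKernel} and \ref{lemma:KernelIsBeta}) to identify the stalks, and this creates real problems.

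First, those lemmas require stronger hypotheses than Theorem \ref{thm:0} assumes: Lemma \ref{lem:WittSheafEqualsKernel} needs $X$ integral, separated, and over a field $F$; Lemma \ref{lemma:KernelIsBeta} needs $X$ excellent. The theorem only assumes regular and noetherian. Your parenthetical ``$X$ regular noetherian is excellent'' is false in general, so the appeal to Proposition \ref{prop:complexReal} is not justified under the stated hypotheses. Second, the detour is simply not needed: the stalk of $\W$ at $x$ is $W(\mathcal{O}_{X,x})$ directly (stalks of a sheafification agree with stalks of the presheaf, and Witt groups commute with the filtered colimit over affine neighborhoods), and the stalk of $\supp_*\Z$ at $x$ is $H^0(\sper\mathcal{O}_{X,x},\Z)$ for the same formal reason. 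The paper identifies the stalk map \eqref{eqn:StalkMorphism} in one line this way, and then the local input from Mah\'e and Knebusch finishes it.

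Your alternative route via Arason--Knebusch and the $\I^j$-filtration is essentially the content of Theorem \ref{thm:C}, which does carry the extra hypotheses (excellent, integral, separated, over $F$); it is not a cleaner substitute here. Drop the Gersten machinery from this proof and compute the stalks directly---then your argument matches the paper's and works under the hypotheses as stated.
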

\begin{proof}
First we explain how to obtain the map $$\shfsign:\W[\nicefrac{1}{2}]\stackrel{\simeq}{\rightarrow} \supp_{*}\Zh$$ of sheaves. The global signature morphism of sheaves $\shfsign$ induces a morphism of presheaves from $U\mapsto \W(U)[\nicefrac{1}{2}]$ to $U\mapsto H^0(U_r,\Z)[\nicefrac{1}{2}]$. The former sheaf is $\W[\nicefrac{1}{2}]$ by definition. The latter sheaf coincides with $\supp_{*}\Z[\nicefrac{1}{2}]$ using Lemma \ref{lem:ZariskiCohomologySheafSuppZ}. 

To prove this map of sheaves is an isomorphism we prove that it is an isomorphism on stalks. For any point $x\in X$, this map induces on stalks
\begin{equation}\label{eqn:StalkMorphism}
\shfsign_x:W(\mathcal{O}_{X,x})[\nicefrac{1}{2}]\rightarrow H^0(\sper \mathcal{O}_{X,x}, \Z)[\nicefrac{1}{2}] 
\end{equation}
and this is the global signature, after inverting 2, of the scheme $\spec \mathcal{O}_{X,x}$. Since $X$ is regular, $\mathcal{O}_{X,x}$ is local and connected. Then, the facts on the kernel and cokernel of the global signature of connected local rings that were recalled in the previous section imply that \eqref{eqn:StalkMorphism} is an isomorphism.

Finally, the isomorphism in cohomology may be obtained from the isomorphism of sheaves after identifying $H^p_{Zar}(X,\supp_{*}\Z[\nicefrac{1}{2}])$ with $H^p(X_r,\Zh)$ using Lemmas \ref{lem:ZariskiCohomologySheafSuppZ} and \ref{lem:Inverting2CommutesWithCohomology}.
\end{proof}

\subsection{The global signature and the fundamental ideal}
\label{subsect:GlobalSignFundamentalIdeal}
\begin{definition}
Let $F$ be a field of characteristic different from 2. Let $X$ be an excellent, integral, noetherian, regular, separated, $F$-scheme. For $j\geq 0$ any integer, it follows from Proposition \ref{prop:FundamentalIdealResidueCommute} that $\sign:I^j(K)\rightarrow H^0(\sper K,2^j\Z)$ induces a morphism 
\begin{equation*}
\shfsign_j:\mathcal{I}^j\rightarrow \supp_{*} 2^j\Z
\end{equation*}
of Zariski sheaves on $X$.
\end{definition}

\begin{theorem}\label{thm:C}
Let $F$ be a field of characteristic different from 2. Let $X$ be an excellent, integral, noetherian, regular, separated, $F$-scheme. Then:
\begin{enumerate}[(i)] 
\item $\shfsign_j$ determines an isomorphism
\begin{equation*}
\varinjlim(\W\stackrel{2}{\rightarrow}\mathcal{I}\stackrel{2}{\rightarrow}\mathcal{I}^2\stackrel{2}{\rightarrow}\cdots )\stackrel{\simeq}{\rightarrow} \supp_{*}\Z
\end{equation*}
 of sheaves
\item define $\W_2:=\varinjlim(\W/\mathcal{I}\stackrel{2}{\rightarrow}\W/\mathcal{I}^2\stackrel{2}{\rightarrow}\W/\mathcal{I}^3\stackrel{2}{\rightarrow}\cdots)$. The short exact sequence of Zariski sheaves on $X$
\begin{equation*}
0\rightarrow \varinjlim(\W\stackrel{2}{\rightarrow}\I\stackrel{2}{\rightarrow}\I^2\stackrel{2}{\rightarrow}\cdots )\rightarrow \W[\nicefrac{1}{2}]\rightarrow \W_2\rightarrow 0
\end{equation*} 
is isomorphic via $\mathcal{S}ign$ to the short exact sequence of Zariski sheaves on $X$
\begin{equation*}
0\rightarrow\supp_{*} \Z \rightarrow \supp_{*} \Zh \rightarrow \supp_{*}\Z_2\rightarrow 0 
\end{equation*} 
where $\Z_2$ is the cokernel of the inclusion $\Z\rightarrow \Zh$.
\end{enumerate}
\end{theorem}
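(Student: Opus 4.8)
The plan is to deduce part (i) directly from the kernel descriptions of $\I^j$ and $\supp_{*}\Z$ established above together with the Arason--Knebusch theorem (Proposition \ref{prop:ColimitI^n}), and then to obtain part (ii) formally from part (i), Theorem \ref{thm:0}, and the exactness of filtered colimits and of $\supp_{*}$.

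For part (i) I would first reduce to a statement about presheaves. By Lemma \ref{lem:ChangeOfCoefficients}(i) the sheaf $\varinjlim(\W\xrightarrow{2}\I\xrightarrow{2}\I^2\to\cdots)$ is the sheafification of the presheaf $U\mapsto\varinjlim_j\I^j(U)$ (with transition maps multiplication by $2$, starting from $\I^0=\W$), while $\supp_{*}\Z$ is already a sheaf; hence it suffices to show that $\shfsign$ induces an isomorphism of presheaves $U\mapsto\varinjlim_j\I^j(U)\xrightarrow{\simeq}\supp_{*}\Z(U)$, for then the left-hand presheaf is itself a sheaf and coincides with its sheafification. Fixing an open $U\subseteq X$ and uniformizers $\pi_x$ for $x\in X^{(1)}$, I would use Lemma \ref{lem:KernelIjresidue} to write $\I^j(U)=\ker\bigl(I^j(K)\xrightarrow{\oplus\partial_{\pi}}\bigoplus_{x\in U^{(1)}}I^{j-1}(k(x))\bigr)$, so that by exactness of filtered colimits $\varinjlim_j\I^j(U)$ is the kernel of the colimit of these residue maps. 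Then I would identify $\varinjlim_j I^j(K)\xrightarrow{\sign}H^0(\sper K,\Z)$ via Proposition \ref{prop:ColimitI^n}, and, since filtered colimits commute with direct sums, $\varinjlim_j\bigoplus_{x}I^{j-1}(k(x))=\bigoplus_{x}\varinjlim_j I^{j-1}(k(x))\xrightarrow{\sign}\bigoplus_{x}H^0(\sper k(x),\Z)$. Under these identifications Proposition \ref{prop:FundamentalIdealResidueCommute}(i) shows that the colimit of $\oplus\partial_{\pi}$ is exactly $\oplus\beta_{\pi}$: the factor of $2$ in the right-hand vertical map of Diagram \eqref{eqn:CommutesResidue} is absorbed by the normalization identifying $\varinjlim_j\bigl(H^0(\sper K,2^j\Z),\cdot 2\bigr)$ with $H^0(\sper K,\Z)$. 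Finally Lemma \ref{lemma:KernelIsBeta} identifies $\ker\bigl(H^0(\sper K,\Z)\xrightarrow{\oplus\beta_{\pi}}\bigoplus_{x}H^0(\sper k(x),\Z)\bigr)$ with $\supp_{*}\Z(U)$, and naturality in $U$ gives the isomorphism of presheaves; by construction it is induced by the $\shfsign_j$.

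For part (ii) I would apply $\varinjlim_j$ to the short exact sequences of Zariski sheaves $0\to\I^j\to\W\to\W/\I^j\to0$, with transition maps induced by multiplication by $2$ (well defined since $2\I^j\subseteq\I^{j+1}$). Exactness of filtered colimits, together with Lemma \ref{lem:Inverting2CommutesWithCohomology}(i) (so that $\varinjlim_j(\W\xrightarrow{2}\W\xrightarrow{2}\cdots)=\W[\nicefrac{1}{2}]$) and the definition of $\W_2$, yields the first short exact sequence. Applying the exact functor $\supp_{*}$ (exactness by \cite[Theorem 19.2]{S94}) to $0\to\Z\to\Zh\to\Z_2\to0$ yields the second. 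Since the $\shfsign_j$ are the restrictions of $\shfsign$ to $\I^j$ and each commutes with multiplication by $2$, the global signature morphism of sheaves induces a morphism between these two short exact sequences; its left vertical arrow is an isomorphism by part (i) and its middle vertical arrow is an isomorphism by Theorem \ref{thm:0}, so the induced map on cokernels, namely $\W_2\to\supp_{*}\Z_2$, is an isomorphism as well.

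I expect the only genuine subtlety to be the bookkeeping in part (i): correctly matching the two colimit normalizations so that the factor of $2$ appearing in Proposition \ref{prop:FundamentalIdealResidueCommute}(i) disappears and the colimit of the Witt-theoretic residue $\oplus\partial_{\pi}$ agrees on the nose with the real-cohomology residue $\oplus\beta_{\pi}$. Everything else is formal, resting on exactness of filtered colimits, the Arason--Knebusch theorem, Theorem \ref{thm:0}, and the sheaf identifications already in place.
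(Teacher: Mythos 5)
Your proof is correct and follows essentially the same route as the paper: for part (i) you use the commutative residue diagram of Proposition~\ref{prop:FundamentalIdealResidueCommute}, the kernel descriptions of Lemmas~\ref{lem:KernelIjresidue} and~\ref{lemma:KernelIsBeta}, the Arason--Knebusch isomorphism of Proposition~\ref{prop:ColimitI^n}, and exactness of filtered colimits, exactly as the paper does, and for part (ii) you spell out the deduction from (i) and Theorem~\ref{thm:0} that the paper leaves terse. Your explicit bookkeeping of the factor of~$2$ and of how the colimit presheaf is already a sheaf are both implicit in the paper's argument, so this is the same proof.
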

\begin{proof}
We prove $(i)$, and then $(ii)$ follows using Theorem \ref{thm:0}. For every open $U$ in $X$, from Proposition \ref{prop:FundamentalIdealResidueCommute} we have that the diagram below commutes 
\begin{equation*}
\xymatrix{  
I^j\left(K\right) \ar^{\oplus \partial_{\pi}}[r] \ar^{\sign}[d] & \underset{x\in U^{(1)}}{\bigoplus} I^{j-1}\left(k\left(x\right)\right) \ar^{2\sign}[d]\\
H^0(\sper K, 2^jZ) \ar^{\oplus\beta_\pi}[r] & \underset{x\in U^{(1)}}{\bigoplus} H^0(\sper k(x), 2^j\Z)
}
\end{equation*} 
Taking direct limits with respect to multiplication by 2, the left vertical arrow becomes an isomorphism by Proposition \ref{prop:ColimitI^n}, and the right vertical arrow is also an isomorphism because it is the composition $$I^{j-1}\left(k (x)\right)\stackrel{\sign}{\rightarrow} H^0(\sper k(x), 2^{j-1}\Z) \stackrel{2}{\rightarrow} H^0(\sper k(x), 2^j\Z)$$ and each of the maps in the composition are isomorphisms after taking direct limits. Direct limits are exact, therefore using Lemmas \ref{lem:KernelIjresidue} and \ref{lemma:KernelIsBeta} we get an isomorphism of kernels from
$$\varinjlim(\W(U)\stackrel{2}{\rightarrow}\mathcal{I}(U)\stackrel{2}{\rightarrow}\mathcal{I}^2(U)\stackrel{2}{\rightarrow}\cdots )$$ to $$\varinjlim(\supp_{*}\Z(U)\stackrel{2}{\rightarrow}\supp_{*}2\Z(U)\stackrel{2}{\rightarrow}\supp_{*}2^2\Z(U)\stackrel{2}{\rightarrow}\cdots )$$
for every open $U$. As $$\varinjlim(\supp_{*}\Z\stackrel{2}{\rightarrow}\supp_{*}2\Z\stackrel{2}{\rightarrow}\supp_{*}2^2\Z\stackrel{2}{\rightarrow}\cdots ) \simeq \supp_{*}\Z$$ we get the isomorphism of sheaves stated in $(i)$, and this finishes the proof.
\end{proof}

Define the virtual cohomological 2-dimension of $X$, denoted $\vcd(X)$, to be  $$\vcd(X)=sup\{\vcd(\eta)\}$$ where $\eta$ runs through the generic points of $X$.

Note that in $(ii)$ of the next theorem, the Milnor conjecture is used to obtain the short exact sequence of sheaves \eqref{eqn:MilnorSES}. This conjecture has been proved by V. Voevodsky \cite[Theorem 4.1]{Voevodsky}, there is also another proof by F. Morel \cf \cite{Morel}. 
\begin{theorem}\label{thm:B}
Let $F$ be a field of characteristic different from 2. Let $X$ be an excellent, integral, noetherian, regular, separated $F$-scheme. If $\vcd(X)$ is finite, say $\vcd(X)=s$, then:
\begin{enumerate}[(i)]
\item for $j\geq s+1$, $\mathcal{S}ign_j$ induces an isomorphism 
\begin{equation*}
\mathcal{I}^j\stackrel{\simeq}{\rightarrow} \supp_{*} 2^j\Z
\end{equation*}
of sheaves;
\item for $j\geq s+1$, the short exact sequence
\begin{equation}\label{eqn:MilnorSES}
\mathcal{I}^{j+1} \rightarrow \mathcal{I}^{j} \rightarrow \mathcal{H}^{j} 
\end{equation}
of sheaves is isomorphic via $\mathcal{S}ign_j$ to the short exact sequence
\begin{equation*}
\supp_{*} 2^{j+1}\Z \rightarrow \supp_{*} 2^j\Z \rightarrow \supp_{*}\Z/2\Z
\end{equation*}
of sheaves;
\item for $j\geq s+1$, $\mathcal{S}ign_j$ induces an isomorphism of cohomology groups $$H^p_{Zar}(X,\I^j)\simeq H^p(X_r,2^j\Z)$$ for all $p\geq 0$;
\item if $X$ is an integral, separated scheme that is smooth and of finite type over $\R$, then for $j\geq s+1$, the isomorphism of $(iii)$ determines an isomorphism of cohomology groups $$H^p_{Zar}(X,\I^j)\simeq H^p_{sing}(X(\R),2^j\Z)\stackrel{2^{-j}}{\simeq}H^p_{sing}(X(\R), \Z)$$ for all $p\geq 0.$
\end{enumerate} 
\end{theorem}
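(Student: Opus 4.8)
The plan is to reduce the whole theorem to part~(i), which already asserts that $\shfsign_j\colon\I^j\to\supp_{*}2^j\Z$ is an isomorphism of Zariski sheaves for $j\ge s+1$; parts~(ii), (iii), (iv) will then be formal consequences.

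For part~(i), since $\shfsign_j$ is a morphism of Zariski sheaves it suffices to check that it is an isomorphism on every stalk. Fix $x_0\in X$ and put $A=\mathcal{O}_{X,x_0}$. Then $\spec A$ is again an excellent, integral, noetherian, regular, separated $F$-scheme, and $\vcd(\spec A)=\vcd(K)\le s$. I would apply Proposition~\ref{prop:FundamentalIdealResidueCommute} to $\spec A$: this gives a commutative square whose top row is the second residue map $I^j(K)\to\bigoplus_{x}I^{j-1}(k(x))$ (the sum over the height-one primes $x$ of $A$, i.e.\ the codimension-one points of $X$ specializing to $x_0$), bottom row $\oplus\beta_\pi$, left vertical $\sign$ and right vertical $\oplus\,2\sign$; by Lemmas~\ref{lem:KernelIjresidue} and~\ref{lemma:KernelIsBeta} the stalk of $\shfsign_j$ at $x_0$ is the map induced on the kernels of the two horizontal maps. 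A short diagram chase shows this induced map is an isomorphism as soon as both vertical maps are. The left vertical map is $\sign_j$ over the field $K$, an isomorphism by Lemma~\ref{lem:EquivalentI^nProperties}(v) because $\vcd(K)\le s$ and $j\ge s+1$. For the right vertical map, at a height-one prime $\mathfrak{p}$ with residue field $\kappa=A/\mathfrak{p}$ the map $2\sign_{j-1}\colon I^{j-1}(\kappa)\to H^0(\sper\kappa,2^j\Z)$ factors as $\sign_{j-1}\colon I^{j-1}(\kappa)\to H^0(\sper\kappa,2^{j-1}\Z)$ followed by the multiplication-by-$2$ isomorphism $H^0(\sper\kappa,2^{j-1}\Z)\xrightarrow{\ \simeq\ }H^0(\sper\kappa,2^j\Z)$, and $\sign_{j-1}$ is an isomorphism by Lemma~\ref{lem:EquivalentI^nProperties}(v) \emph{provided} $\vcd(\kappa)\le s-1$ (so that $j-1\ge s\ge\vcd(\kappa)+1$).

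The hard part is exactly this last input: the bound $\vcd(\kappa)\le s-1$ for the residue field $\kappa$ of every height-one prime $\mathfrak{p}$ of $A$. When $X$ is of finite type over a field it is immediate from transcendence degrees (such a $\kappa$ has transcendence degree one less than $K$). In general it says that the residue field of the excellent discrete valuation ring $A_{\mathfrak{p}}$, whose fraction field is $K$ with $\vcd(K)\le s$, has $2$-cohomological dimension at most $s-1$; this is the standard fact that passing to the residue field of an excellent discrete valuation ring drops the $2$-cohomological dimension by (at least) one (one extracts it from the residue map on Galois cohomology, or via the completion $\widehat{A_{\mathfrak{p}}}$), which I would cite. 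Granting it, part~(i) is complete. (Alternatively, the same stalk analysis shows $2\colon\I^j\to\I^{j+1}$ is an isomorphism of sheaves for $j\ge s+1$, after which~(i) also drops out of Theorem~\ref{thm:C}(i); this needs the same bound.)

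Parts~(ii)--(iv) are then routine. Part~(iii) follows by applying $H^p_{Zar}(X,-)$ to the sheaf isomorphism of~(i) and using $H^p_{Zar}(X,\supp_{*}2^j\Z)\simeq H^p(X_r,2^j\Z)$ from Lemma~\ref{lem:ZariskiCohomologySheafSuppZ}. For~(iv), a smooth integral separated scheme of finite type over $\R$ is excellent, noetherian and regular, with $\vcd(X)\le\dim X<\infty$, so the hypotheses of the theorem hold; then $H^p(X_r,M)\simeq H^p_{sing}(X(\R),M)$ for every abelian group $M$ (Section~\ref{subsubsect:RealCoh}), and multiplication by $2^{-j}$, which is an isomorphism $2^j\Z\simeq\Z$, supplies the remaining two isomorphisms. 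For~(ii), observe that on the subsheaf $\I^{j+1}\subset\I^j$ the map $\shfsign_j$ coincides with $\shfsign_{j+1}$ followed by the inclusion $\supp_{*}2^{j+1}\Z\hookrightarrow\supp_{*}2^j\Z$, both being induced by $\sign$ on $I^{j+1}(K)\subset I^j(K)$; since $\shfsign_j$ and $\shfsign_{j+1}$ are isomorphisms by~(i) and $\supp_{*}$ is exact, so that the cokernel of $\supp_{*}2^{j+1}\Z\hookrightarrow\supp_{*}2^j\Z$ is $\supp_{*}(\Z/2\Z)$, the five lemma applied to the short exact sequence~\eqref{eqn:MilnorSES} and to $0\to\supp_{*}2^{j+1}\Z\to\supp_{*}2^j\Z\to\supp_{*}\Z/2\Z\to0$ identifies the two short exact sequences and, in particular, produces the isomorphism $\mathcal{H}^j\simeq\supp_{*}\Z/2\Z$.
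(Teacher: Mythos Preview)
Your proof is correct and follows essentially the same route as the paper: both reduce everything to~(i), establish~(i) via the commutative square of Proposition~\ref{prop:FundamentalIdealResidueCommute} by showing the vertical signature maps are isomorphisms using Lemma~\ref{lem:EquivalentI^nProperties}(v), and both hinge on the same key input that $\vcd(k(x))\le s-1$ for $x\in X^{(1)}$. The only cosmetic differences are that the paper argues with sections over every open $U$ rather than passing to stalks $\spec\mathcal{O}_{X,x_0}$, and for the $\vcd$ drop it cites the general codimension-$p$ statement from \cite[Proposition~4.1(a)]{Kahn} (namely $\cd(k(x)[\sqrt{-1}])\le\cd(X[\sqrt{-1}])-p$ for $x\in X^{(p)}$) rather than the DVR case you invoke.
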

\begin{proof}
We prove $(i)$. The proof of $(ii)$ is an easy consequence of $(i)$, while $(iii)$ and $(iv)$ follow immediately from $(i)$. When $\vcd(X)=s$, for any $x\in X^{(p)}$ we have that $\cd(k(x)[\sqrt{-1}]) \leq \cd(X[\sqrt{-1}])-p$ \cite[Proposition 4.1 (a)]{Kahn}. Therefore, $\vcd(k(x))\leq s-p$. For every open subscheme $U$ of $X$, it follows that in the commutative diagram from Proposition \ref{prop:FundamentalIdealResidueCommute} 
\begin{equation*}
\xymatrix{  
I^j\left(K\right) \ar^{\oplus \partial_{\pi}}[r] \ar^{\sign}[d] & \underset{x\in U^{(1)}}{\bigoplus} I^{j-1}\left(k\left(x\right)\right) \ar^{2\sign}[d]\\
H^0(\sper K, 2^jZ) \ar^{\oplus\beta_{\pi}}[r] & \underset{x\in U^{(1)}}{\bigoplus} H^0(\sper k(x), 2^j\Z)
}
\end{equation*}
the vertical maps are isomorphisms for $j\geq s+1$ by Lemma \ref{lem:EquivalentI^nProperties}. Indeed, the right vertical map $I^{j-1}\left(k (x)\right)\stackrel{2\sign}{\rightarrow} H^0(\sper k(x), 2^{j}\Z)$ is the composition of the isomorphism $$I^{j-1}\left(k (x)\right)\stackrel{\sign}{\rightarrow} H^0(\sper k(x), 2^{j-1}\Z)$$ with the isomorphism $$H^0(\sper k(x), 2^{j-1}\Z) \stackrel{2}{\rightarrow} H^0(\sper k(x), 2^j\Z)$$ and as such is an isomorphism. Therefore we have an isomorphism between the kernels of the horizontal maps. We may identify the kernel of $\oplus \partial_{\pi}$ over $U$ with $\I^j(U)$ using Lemma \ref{lem:KernelIjresidue}, and we identify $\supp_{*}2^n\Z(U)$ with the kernel of $\oplus\beta_{\pi}$ over $U$ using Lemma \ref{lemma:KernelIsBeta}. This finishes the proof of Theorem \ref{thm:B}.
\end{proof}

\section{Higher signatures}\label{sec:HigherSign}
\subsection{Derived Witt groups}\mbox{}\\
The Witt group $W(X)$ is a part of a cohomology theory $W^n(X)$ for schemes called the derived Witt groups. Next, we briefly recall the definition of the derived Witt groups that we use throughout.
\subsection{Definition of the derived Witt groups}\mbox{}\\
Let $(\mathcal{E},\sharp)$ be an exact category with duality $\sharp$. The homotopy category $\mathcal{K}^b(\mathcal{E})$ of bounded chain complexes in $\mathcal{E}$ is a category having as objects bounded chain complexes in $\mathcal{E}$ and as morphisms the chain maps up to chain homotopy. The bounded derived category $D^b(\mathcal{E})$ is obtained from the homotopy category by formally inverting quasi-isomorphisms. The duality $\sharp$ on $\mathcal{E}$ induces a duality on the homotopy category $\mathcal{K}^b(\mathcal{E})$ and on the derived category $D^b(\mathcal{E})$. Let $\varpi$ denote the isomorphism to the double dual $\varpi:1\stackrel{\backsimeq}{\rightarrow}\sharp \sharp$ in $D^b(\mathcal{E})$ that is induced from the canonical one in $\mathcal{E}$. Then, $(D^b(\mathcal{E}), \sharp, \varpi, 1)$ is a triangulated category with duality. For a reference for these facts see 
\cite[Section 2.6]{BalmerTriII}.\\
Let $X$ be a scheme with $2$ invertible in the global sections $\mathcal{O}_X(X)$. Let $\textup{Vect}(X)$ denote the exact category of vector bundles on $X$, that is, the category of $\mathcal{O}_X$-modules which are locally free and of finite rank. For any vector bundle $\mathcal{E}$ on $X$, the usual duality $\mathcal{E}^{\sharp}:= \textup{Hom}_{\textup{Vect}(X)}(\mathcal{E},\mathcal{O}_X)$ defines a duality on $\textup{Vect}(X)$, making $(\textup{Vect}(X),\sharp)$ an exact category with duality. The \emph{derived Witt groups of $X$} are the Witt groups $W^n(D^b(\textup{Vect}(X)))$ of the triangulated category with duality $(D^b(\textup{Vect}(X)),\sharp, \varpi, 1)$ \cite[Definition 2.3]{BalmerTriII}. They will be denoted by $W^n(X):=W^n(D^b(\textup{Vect}(X)))$. 

\subsection{Coniveau spectral Sequence}\mbox{}\label{subsect:ConiveauspectralSequence}\\
Let $X$ be a regular, noetherian, separated, $\Zh$-scheme. In order to make clear what is the filtration and construction of the spectral sequence of Theorem \ref{thm:A}, in this section we recall the construction of the coniveau spectral sequence for the derived Witt groups due to P. Balmer and C. Walter \cite{BalmerWalter}.
\subsection{Filtration by codimension of support}\label{subsect:ConstructionGerstenWittCMX}\mbox{}\\
Let $D^b(X):=D^b(\textup{Vect}(X)).$ We recall the filtration on $D^b(X)$ by codimension of support. For any $\mathcal{O}_X$-module $M$, we denote by $\textup{supp}M$ the set of points $x\in X$ for which the localization $M_x$ is non-zero. For any complex $M_{\bullet}\in D^b(X)$, $\textup{supp}H_i(M_{\bullet})$ is a closed subspace of $X$. The support of a complex $M_{\bullet}$ is defined to be
\begin{equation*}
\textup{supp}M_{\bullet}:= \bigcup_{i\in \Z}\textup{supp}H_i(M_{\bullet})
\end{equation*}
and it is also a closed subspace of $X$ as it is a finite union, since complexes in $D^b(X)$ are bounded, of closed subspaces. Let $D^{b}_Z(X)\subset D^b(X)$ consist of those complexes $M_{\bullet}$ having $\textup{supp}M_{\bullet}\subset Z$. Recall that the codimension of a closed subspace $Z$ of $X$ is defined as
\begin{equation*}\label{Eqn:CodimensioDefinition}
\textup{codim}(Z):=\textup{min}_{\eta \in Z}\textup{dim}\mathcal{O}_{X,\eta}
\end{equation*}
where the minimum runs over the finitely many generic points $\eta\in Z$ of $Z$ (as $X$ is noetherian, so is $Z$, hence $Z$ has finitely many irreducible components).

For any integer $p\geq 0$, let $D^{p}(X)\subset D^b(X)$, or simply $D^p$, consist of those complexes having codimension of support greater than or equal to $p$, that is
\begin{equation*}\label{Eqn:CodimSupportFiltration}
D^{p}(X)=\{M_{\bullet}\in D^b(X) | \textup{codim}_X(\textup{supp}M_{\bullet})\geq p\}
\end{equation*}

For $p\geq0$, we have short exact sequences \cite[Theorem 3.1 and proof]{BalmerWalter} of triangulated categories with duality
\begin{equation*}
 D^{p+1} \rightarrow D^{p} \rightarrow D^p/D^{p+1}
\end{equation*}
which induce \cite[Theorem 3.1 and proof]{BalmerWalter}
the long exact sequence of groups
\begin{equation*}
\cdots \rightarrow W^{p+q}(D^{p+1}) \stackrel{i_{p+1,q-1}}{\rightarrow} W^{p+q}(D^{p}) \stackrel{j_{p,q}}{\rightarrow} W^{p+q}(D^{p}/D^{p+1}) \stackrel{k_{p,q}}{\rightarrow} W^{p+q+1}(D^{p+1})\stackrel{i_{p+1, q}}{\rightarrow}\cdots
\end{equation*}
where we index the maps based on the indices of their respective domain. The map $i$ is induced by the inclusion $D^{p+1}\rightarrow D^p$, $j$ by the quotient $D^p\rightarrow D^{p}/D^{p+1}$, and $k$ is the connecting morphism.

From the long exact sequences we obtain an exact couple by setting $\text{E}_1^{p,q}:=W^{p+q}(D^p/D^{p+1})$, $\text{D}_1^{p,q}:=W^{p+q}(D^{p})$, and taking the differential to be $d^{p,q}:=j_{p+1,q}\circ k_{p, q}$. By the well-known method of Massey's exact couples, this exact couple determines the spectral sequence below
  \begin{equation*}\label{eqn:ConiveauCoherentWitt}
\text{E}_1^{p,q}:= W^{p+q}(D^p/D^{p+1}) \Rightarrow W^{p+q}(X)
  \end{equation*}
with abutment the derived Witt groups. The differential $d_r$ on the $r$-th page of this spectral sequence has bidegree $(r,1-r)$. By saying that the abutment is the derived Witt groups, we mean that there is a filtration on the $n$th derived Witt group $W^{n}(X)$
\begin{equation*}
 \cdots \subset \text{F}^{3,n-3} \subset \text{F}^{2,n-2}\subset \text{F}^{1,n-1}\subset W^n(X)
\end{equation*} 
where  
\begin{equation*}
\text{F}^{p,q}:=\textup{im}(W^{p+q}(D^p)\rightarrow W^{p+q}(X)).
\end{equation*}
When the dimension of $X$ is finite the spectral sequence \emph{strongly converges}, by which we mean that the filtration on the abutment is a finite filtration, and that the terms $\text{F}^{p,q}$ form exact sequences of groups
\begin{equation*}
0\rightarrow \text{F}^{p+1,q-1}\rightarrow \text{F}^{p,q} \rightarrow \text{E}^{p,q}_{\infty}\rightarrow 0
\end{equation*}
where $\text{E}^{p,q}_{\infty}$ is the stable term, that is to say, for some $r$ sufficiently large, $\text{E}^{p,q}_r=\text{E}^{p,q}_{r+n}$ for $n\geq 0$ (because the dimension of $X$ is finite and thus the differentials $d_r$ are eventually zero for $r$ sufficiently large) and this group is labeled $E^{p,q}_{\infty}$.

\subsection{The coniveau spectral sequence after inverting 2}\label{subsect:ConiveauAfterInverting2}\mbox{}\\
Since localization of $\Z$-modules is exact, we obtain from the exact couple of the coniveau spectral sequence an exact couple  $\text{HE}_1^{p,q}:=W^{p+q}(D^p/D^{p+1})[\nicefrac{1}{2}]$, $\text{HD}_1^{p,q}:=W^{p+q}(D^{p})[\nicefrac{1}{2}]$. This determines the spectral sequence
  \begin{equation*}
\text{HE}_1^{p,q}:= W^{p+q}(D^p/D^{p+1})[\nicefrac{1}{2}] \Rightarrow W^{p+q}(X)[\nicefrac{1}{2}]
  \end{equation*}
with abutment the derived Witt groups with 2 inverted. Again the differential $d_r$ on the $r$-th page of this spectral sequence has bidegree $(r,1-r)$. The filtration on the abutment $W^{n}(X)[\nicefrac{1}{2}]$ is
\begin{equation*}\label{Eqn:FiltrationOnAbutment}
\cdots \subset \text{HF}^{3,n-3} \subset \text{HF}^{2,n-2}\subset \text{HF}^{1,n-1}\subset W^n(X)[\nicefrac{1}{2}]
\end{equation*} 
where  
\begin{equation*}
\text{HF}^{p,q}:=\textup{im}(W^{p+q}(D^p)[\nicefrac{1}{2}]\rightarrow W^{p+q}(X)[\nicefrac{1}{2}]).
\end{equation*}
The following Lemma recalls facts about the coniveau spectral sequence due to P. Balmer and C. Walter. We only give a proof to address the case with 2 inverted.
\begin{lemma}\label{lem:GerstenComplexZariskiCoh}
Let $F$ be a field of characteristic different from 2. Let $X$ be a regular, noetherian, separated $F$-scheme. For $q$ not congruent to 0 modulo 4, the $\textup{E}_2^{*,q}$-terms of the coniveau spectral sequence and the coniveau spectral sequence after inverting 2 are zero. For $q$ congruent to 0 modulo 4, in the coniveau spectral sequence we have that $\textup{E}_2^{p,q}=H^p_{Zar}(X,\W)$ and in the coniveau spectral sequence after inverting 2, $\textup{HE}_2^{p,q}=H^p_{Zar}(X,\W[\nicefrac{1}{2}])$.
\end{lemma}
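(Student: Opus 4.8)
The plan is to identify the $\textup{E}_1$-page of the Balmer--Walter coniveau spectral sequence with a Gersten--Witt complex and then invoke the Gersten conjecture for Witt groups. Since $X$ is regular and noetherian its local rings are regular, hence integral domains, so $X$ is a finite disjoint union of integral, regular, noetherian, separated $F$-schemes; the spectral sequence, the Witt sheaf $\W$, and its Zariski cohomology all decompose accordingly, and I would carry out the argument on a single integral component, retaining the name $X$. Following Balmer and Walter \cite{BalmerWalter} one has $\textup{E}_1^{p,q}=W^{p+q}(D^p/D^{p+1})\cong\bigoplus_{x\in X^{(p)}}\widetilde{W}^{\,q}(k(x))$, a sum over the codimension-$p$ points of Witt groups of residue fields placed in cohomological degree $(p+q)-p=q$ (with the canonical line-bundle twist produced by dévissage, which is immaterial below).

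The first assertion is then just $4$-periodicity. The derived Witt groups of a field vanish outside degrees divisible by $4$, and the twist does not affect this, so for $q\not\equiv 0\bmod 4$ one gets $\textup{E}_1^{p,q}=0$ for every $p$, hence $\textup{E}_2^{p,q}=0$; and since the exact couple with $2$ inverted is obtained by applying the exact functor $-\otimes_{\Z}\Z[\nicefrac{1}{2}]$, we likewise get $\textup{HE}_1^{p,q}=\textup{E}_1^{p,q}[\nicefrac{1}{2}]=0$ and $\textup{HE}_2^{p,q}=0$.

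For $q\equiv 0\bmod 4$, the complex $(\textup{E}_1^{\bullet,q},d_1)$ is, again by \cite{BalmerWalter}, the Gersten--Witt complex
$$\bigoplus_{x\in X^{(0)}}W(K)\rightarrow\bigoplus_{x\in X^{(1)}}\widetilde{W}(k(x))\rightarrow\bigoplus_{x\in X^{(2)}}\widetilde{W}(k(x))\rightarrow\cdots$$
of $X$, and the Gersten conjecture in the equicharacteristic case \cite[Theorem 6.1]{GillePaninBalmerWalter} asserts precisely that the associated complex of Zariski sheaves is a flasque resolution of $\W$; taking sections over $X$ and then cohomology gives $\textup{E}_2^{p,q}=H^p_{Zar}(X,\W)$. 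Inverting $2$ and using that localization commutes with taking homology yields $\textup{HE}_2^{p,q}=\textup{E}_2^{p,q}[\nicefrac{1}{2}]=H^p_{Zar}(X,\W)[\nicefrac{1}{2}]$, which by Lemma \ref{lem:Inverting2CommutesWithCohomology}$(ii)$ (applicable since $X$ is noetherian) equals $H^p_{Zar}(X,\W[\nicefrac{1}{2}])$. There is no serious obstacle here: the substance is carried by the cited Gersten conjecture and the Balmer--Walter identification of the $\textup{E}_1$-page, and the only points demanding attention are the degree shift and the canonical twist in the dévissage step, together with the reduction from the possibly disconnected $X$ to its integral components.
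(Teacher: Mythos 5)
Your proof is correct and follows essentially the same route as the paper: Balmer--Walter d\'evissage identifies the $\textup{E}_1$-page with a Gersten--Witt complex, $4$-periodicity of Witt groups of fields kills the rows with $q\not\equiv 0\bmod 4$, and the equicharacteristic Gersten conjecture \cite[Theorem 6.1]{GillePaninBalmerWalter} makes the sheafified complex a flasque resolution of $\W$, giving $\textup{E}_2^{p,q}=H^p_{Zar}(X,\W)$. The only difference is in bookkeeping for the inverted-2 case: you invert $2$ on the computed $\textup{E}_2$-terms and then invoke Lemma \ref{lem:Inverting2CommutesWithCohomology}$(ii)$, whereas the paper inverts $2$ already at the level of the sheafified $\textup{E}_1$-page, observes (again via Lemma \ref{lem:Inverting2CommutesWithCohomology}) that each $\mathcal{G}\mathcal{W}\mathcal{C}^p[\nicefrac{1}{2}]$ is a filtered colimit of flasque sheaves and hence flasque, and concludes that $\mathcal{G}\mathcal{W}\mathcal{C}^{\bullet}[\nicefrac{1}{2}]$ is a flasque resolution of $\W[\nicefrac{1}{2}]$. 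Both arguments rest on the same exactness-of-localization and noetherian-filtered-colimit facts, so this is a cosmetic, not substantive, variation; your explicit reduction to integral components is something the paper leaves implicit but is certainly sound.
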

\begin{proof}
Recall that the Gersten conjecture is known for any regular local ring containing a field $F$ of characteristic different from 2 \cite[Theorem 6.1]{GillePaninBalmerWalter}. Hence, the description of the $\text{E}_2^{p,q}$-terms of the coniveau spectral sequence is \cite[Theorem 7.2]{BalmerWalter} together with \cite[Lemma 4.2]{GillePaninBalmerWalter}.

In order verify our claim about inverting 2, for each open $U$ in $X$, let $\mathcal{G}\mathcal{W}\mathcal{C}^p$ denote the sheaf $U\mapsto \text{E}_1^{p,0}(U)$; the sheaves $\mathcal{G}\mathcal{W}\mathcal{C}^p$ form a complex of sheaves $\mathcal{G}\mathcal{W}\mathcal{C}$ which is a flasque resolution of the Witt sheaf $\W$ \cite[Proof of Lemma 4.2]{GillePaninBalmerWalter}. The sheaf $U\mapsto \text{HE}_1^{p,0}(U)$ is exactly $\mathcal{G}\mathcal{W}\mathcal{C}^p[\nicefrac{1}{2}]$ and $\mathcal{G}\mathcal{W}\mathcal{C}^p[\nicefrac{1}{2}]$ is a direct limit of sheaves (Lemma \ref{lem:Inverting2CommutesWithCohomology} $(i)$). Since direct limits are exact, the sheaves $\mathcal{G}\mathcal{W}\mathcal{C}^p[\nicefrac{1}{2}]$ form a complex which is a flasque resolution of the Witt sheaf $\W[\nicefrac{1}{2}]$. Hence $\text{HE}_2^{p,q}=H^p_{Zar}(X,\W[\nicefrac{1}{2}])$ as claimed.
\end{proof}

\subsection{Definition of the higher signatures}
\label{subsec:DefinitionHigherSignature}
Let $F$ be a field of characteristic different from 2. Let $X$ be a regular, noetherian, separated $F$-scheme of finite Krull dimension. A main idea of the article is to use the group homomorphism 
\begin{equation*}\label{eqn:IntroCohomologySignature}
H^i_{Zar}(X,\W)\stackrel{\shfsign}{\rightarrow} H^i(X_r,\Z)
\end{equation*}
induced by $\shfsign$ to define ``higher'' signature maps. Recall that the derived Witt groups $W^i(X)$ are 4-periodic, $W^i(X)\simeq W^{4+i}(X)$, so we only define higher signatures for $W^0(X), W^1(X), W^2(X)$ and $W^3(X)$.

If $i$ is any integer $0\leq i \leq 3$, it follows from the description of the $\text{E}_2$-page of the coniveau spectral sequence in Lemma \ref{lem:GerstenComplexZariskiCoh} that the stable terms $\text{E}_{\infty}^{i,0}$ are subgroups of $H^i_{Zar}(X,\W)$ since they equal the kernel of a differential leaving $H^i_{Zar}(X, \W)$. Let $\text{E}_{\infty}^{i,0} \hookrightarrow H^i_{Zar}(X,\W)$ denote the inclusion map. The composition $W^i(X)\twoheadrightarrow \text{E}_{\infty}^{i,0} \hookrightarrow H^i_{Zar}(X,\W)$ is the edge map in the coniveau spectral sequence.

\begin{definition}\label{def:HigherSign}
Let $F$ be a field of characteristic different from 2. Let $X$ be a noetherian, regular, separated $F$-scheme of finite Krull dimension and let $0\leq i \leq 3$. The composition 
\begin{equation*}
W^i(X)\twoheadrightarrow \textup{E}_{\infty}^{i,0} \hookrightarrow H^i_{Zar}(X,\W)\stackrel{\mathcal{S}ign}{\rightarrow} H^i(X_r,\Z)
\end{equation*}
determines a group homomorphism
\begin{equation*}
\isign:W^i(X)\rightarrow H^i(X_r,\Z)
\end{equation*}
that we call the \emph{$i$-th global signature}. We will also refer to these maps as the \emph{higher signatures of} $X$. 
\end{definition}
When $i=0$, the map $\textup{sign}^0$ agrees with the global signature $\sign:W(X)\rightarrow H^0(X_r,\Z)$ in the introduction because $\sign$ factors through the global sections $\W(X)$ of the Witt sheaf.

\subsection{Atiyah-Hirzebruch spectral sequence for derived Witt groups with 2 inverted}\label{subsect:AtiyahHirz}
Recall that for any abelian group $M$, the real cohomology $H^{*}(X_r, M)$ of a separated $\R$-variety $X$ coincides with the singular cohomology $H^{*}(X(\R),M)$ of the real points $X(\R)$ (Section \ref{subsubsect:RealCoh}). Hence, one may think of the following theorem as giving an Atiyah-Hirzebruch type spectral sequence for derived Witt groups with 2 inverted.
\begin{theorem}\label{thm:A}
Let $X$ be a noetherian, regular, separated $F$-scheme, $F$ a field of characteristic different from 2. The coniveau spectral sequence for the derived Witt groups, after inverting 2, determines a spectral sequence 
\begin{equation*}
\textup{E}_2^{p,q}=\begin{cases} H^p(X_r,\Zh)& \textup{if } q\equiv 0 \textup{ mod } 4\\
						0&	\textup{otherwise}
			\end{cases} 
			\Longrightarrow W^{p+q}(X)[\nicefrac{1}{2}]
\end{equation*}
abutting to the derived Witt groups with 2 inverted. For $r\geq1$, the differentials $d_r$ are of bidegree $(r,r-1)$. The groups $H^p(X_r,\Zh)$ are the cohomology of the real spectrum with coefficients in $\Zh$. For integers $0\leq i \leq 3$, the higher global signatures $\isign$, after inverting 2, are edge maps in this spectral sequence. When the dimension of $X$ is finite, this spectral sequence strongly converges.
\end{theorem}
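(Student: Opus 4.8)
The plan is to read everything off the coniveau spectral sequence for the derived Witt groups after inverting $2$, recalled in Section~\ref{subsect:ConiveauAfterInverting2}, once its $\textup{E}_2$-page has been computed by the earlier results. First I would apply Lemma~\ref{lem:GerstenComplexZariskiCoh}: the terms $\textup{HE}_2^{p,q}$ vanish for $q\not\equiv 0\bmod 4$ and equal $H^p_{Zar}(X,\W[\nicefrac{1}{2}])$ for $q\equiv 0\bmod 4$. Since $X$ is regular and noetherian, Theorem~\ref{thm:0} identifies $H^p_{Zar}(X,\W[\nicefrac{1}{2}])$ with $H^p(X_r,\Zh)$ via $\shfsign$, which puts the $\textup{E}_2$-page in the stated form while leaving the abutment $W^{p+q}(X)[\nicefrac{1}{2}]$ untouched. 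Strong convergence for $\dim X<\infty$ is then inherited from the coniveau spectral sequence (Section~\ref{subsect:ConstructionGerstenWittCMX}), because localizing at $\{1,2,2^2,\dots\}$ is exact and takes finite filtrations to finite filtrations.

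Next, the bidegree. The coniveau differential $d_r$ has bidegree $(r,1-r)$. Because the Witt groups of fields are $4$-periodic with $W^1=W^2=W^3=0$, the $\textup{E}_2$-page is concentrated in the rows $q\equiv 0\bmod 4$; on each of those rows the group is the same, namely $H^p(X_r,\Zh)$, and rows differing by a multiple of $4$ feed the same $4$-periodic abutment group. Hence $d_r$ vanishes unless $r\equiv 1\bmod 4$, and for such $r$ the vertical shift by $1-r$ differs from the vertical shift by $r-1$ by a multiple of $4$, so it lands on the same group and contributes to the same total degree. One may therefore record the differentials as having bidegree $(r,r-1)$; I would phrase this as a harmless reflection of the vertical grading, so that the result reads as an Atiyah--Hirzebruch spectral sequence for the $4$-periodic theory $W^{*}(-)[\nicefrac{1}{2}]$.

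For the edge maps, fix $0\le i\le 3$. In the integral coniveau spectral sequence the filtration on $W^i(X)$ satisfies $\textup{F}^{0,i}=\textup{F}^{1,i-1}=\cdots=\textup{F}^{i,0}=W^i(X)$, because the intermediate quotients $\textup{E}_\infty^{p,i-p}$ with $0\le p<i$ vanish ($4\nmid i-p$ in that range), so the edge map is the composite $W^i(X)\twoheadrightarrow \textup{E}_\infty^{i,0}\hookrightarrow \textup{E}_2^{i,0}=H^i_{Zar}(X,\W)$, and by Definition~\ref{def:HigherSign} the $i$-th global signature is this edge map followed by $\shfsign$. Since forming the coniveau spectral sequence commutes with inverting $2$ by exactness of localization (Section~\ref{subsect:ConiveauAfterInverting2}) and $H^i_{Zar}(X,\W)[\nicefrac{1}{2}]=H^i_{Zar}(X,\W[\nicefrac{1}{2}])$ (Lemma~\ref{lem:Inverting2CommutesWithCohomology}), this edge map, after inverting $2$, becomes $W^i(X)[\nicefrac{1}{2}]\twoheadrightarrow \textup{E}_\infty^{i,0}\hookrightarrow \textup{E}_2^{i,0}=H^i_{Zar}(X,\W[\nicefrac{1}{2}])$, which composed with $\shfsign: H^i_{Zar}(X,\W[\nicefrac{1}{2}])\stackrel{\simeq}{\rightarrow}H^i(X_r,\Zh)$ of Theorem~\ref{thm:0} is exactly $\isign$ after inverting $2$. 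As $\textup{E}_2^{i,0}=H^i(X_r,\Zh)$ is the bottom-row term in column $i$ of the new spectral sequence, this exhibits $\isign$ with $2$ inverted as the corresponding edge map.

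The real content here is external: Lemma~\ref{lem:GerstenComplexZariskiCoh} and Theorem~\ref{thm:0} (the latter resting on Mah\'e's theorem and Pfister's local--global principle) do all the work, so there is no estimate or delicate construction inside this proof. I expect the only point genuinely needing care to be the bookkeeping of the third paragraph, namely checking that ``invert $2$, then take the edge map of the coniveau spectral sequence'' and ``take the edge map, then invert $2$, then apply $\shfsign$'' agree, i.e.\ that the naturality of the $\textup{E}_2$-identification is strong enough to be compatible with the edge maps, together with confirming that the reflection of the vertical grading does not disturb the abutment filtration.
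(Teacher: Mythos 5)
Your proof is correct and takes essentially the same approach as the paper: the $\textup{E}_2$-page is supplied by Lemma~\ref{lem:GerstenComplexZariskiCoh} and the identification $H^p_{Zar}(X,\W[\nicefrac{1}{2}])\simeq H^p(X_r,\Zh)$ by Theorem~\ref{thm:0}, after which strong convergence is inherited from the coniveau spectral sequence. Your extra paragraphs reconciling the bidegree $(r,r-1)$ with the coniveau bidegree $(r,1-r)$ via the $4$-periodicity, and unwinding from Definition~\ref{def:HigherSign} why $\isign$ with $2$ inverted is the edge map, are careful elaborations of points the paper's proof leaves implicit rather than a different route.
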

\begin{proof}
Let $X$ be a regular, noetherian, separated $F$-scheme. As described in Section \ref{subsect:ConiveauAfterInverting2} and Lemma \ref{lem:GerstenComplexZariskiCoh}, after inverting 2 the coniveau spectral sequence determines a spectral sequence
\begin{equation*}
\text{HE}_2^{p,q}=\begin{cases} H^p_{Zar}(X,\W[\nicefrac{1}{2}])& \textup{if } q\equiv 0 \textup{ mod } 4\\
						0&	\textup{otherwise}
			\end{cases} 
			\Longrightarrow W^{p+q}(X)[\nicefrac{1}{2}]
\end{equation*}
abutting to the derived Witt groups with 2 inverted.
Using Theorem \ref{thm:0}, we have that $\mathcal{S}ign$, after inverting 2, determines an isomorphism from the group $H^p_{Zar}(X,\W[\nicefrac{1}{2}])$ to the group $H^p(X_r,\Z[\nicefrac{1}{2}])$. This finishes the proof.  
\end{proof}

The next corollary immediately follows upon considering the structure of the $\textup{E}_2$-page
\begin{corollary}\label{cor:HigherSignIsomorphism}
If $\dim X \leq 3$, then the spectral sequence of Theorem \ref{thm:A} collapses on the $\text{E}_2$-page and the higher signature maps induce isomorphisms of groups
$$W^0(X)[\nicefrac{1}{2}] \stackrel{\textup{sign}^0}{\simeq} H^0(X_r,\Zh)$$
$$W^1(X)[\nicefrac{1}{2}] \stackrel{\textup{sign}^1}{\simeq} H^1(X_r,\Zh)$$
$$W^2(X)[\nicefrac{1}{2}] \stackrel{\textup{sign}^2}{\simeq} H^2(X_r,\Zh)$$
$$W^3(X)[\nicefrac{1}{2}] \stackrel{\textup{sign}^3}{\simeq} H^3(X_r,\Zh)$$
\end{corollary}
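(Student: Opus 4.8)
The plan is to read everything off the shape of the $\textup{E}_2$-page of the spectral sequence of Theorem \ref{thm:A}, together with strong convergence. First I would record that $\dim X\leq 3$ forces $H^p(X_r,\Zh)=0$ for all $p\geq 4$: by Proposition \ref{prop:complexReal} the group $H^p(X_r,\Zh)$ is the $p$-th cohomology of the complex $C^\bullet(X_r,\Zh)$ whose term in degree $q$ is a sum indexed by $X^{(q)}$, and $X^{(q)}=\emptyset$ whenever $q>\dim X$. Combined with the row condition $q\equiv 0\bmod 4$ in Theorem \ref{thm:A}, this shows the $\textup{E}_2$-page is concentrated in the columns $0\leq p\leq 3$ and the rows $q\equiv 0\bmod 4$.

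Next I would check that the spectral sequence collapses, \ie $d_r=0$ for all $r\geq 2$. A differential $d_r$ changes the column index by $r$, so for $r\geq 4$ it sends the strip $0\leq p\leq 3$ into columns $p+r\geq 4$ where every term is zero (and, dually, no non-zero term maps into that strip, as the source would lie in a column $p-r\leq -1$). For $r\in\{2,3\}$, a differential out of a term in a row $q\equiv 0\bmod 4$ lands in the row $q\pm(r-1)$, and $q\pm(r-1)\not\equiv 0\bmod 4$ since $r-1\in\{1,2\}$; hence the target is again zero. Therefore all $d_r$ vanish and $\textup{E}_2=\textup{E}_\infty$, which is the asserted collapse on the $\textup{E}_2$-page.

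Finally I would identify the abutment. Fix $i$ with $0\leq i\leq 3$. Strong convergence (available since $\dim X<\infty$, by Theorem \ref{thm:A}) gives a finite filtration of $W^i(X)[\nicefrac{1}{2}]$ whose successive quotients are the groups $\textup{E}_\infty^{p,\,i-p}$; such a quotient is non-zero only if $i-p\equiv 0\bmod 4$ and $0\leq p\leq 3$, which forces $p=i$. Thus $\textup{E}_\infty^{i,0}=\textup{E}_2^{i,0}=H^i(X_r,\Zh)$ is the unique non-zero graded piece, and so the surjective edge map $W^i(X)[\nicefrac{1}{2}]\twoheadrightarrow \textup{E}_\infty^{i,0}$ (\cf Definition \ref{def:HigherSign}) is an isomorphism. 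By the last assertion of Theorem \ref{thm:A}, this edge map is $\isign$ after inverting $2$, which yields the four displayed isomorphisms. I do not expect a genuine obstacle here: the argument is bookkeeping on the $\textup{E}_2$-page once Theorems \ref{thm:0} and \ref{thm:A} are available, and the only points needing care are the vanishing of all $d_r$ (from the mod-$4$ gap between non-zero rows together with the width bound $p\leq 3$) and pinning down the unique surviving graded piece of $W^i(X)[\nicefrac{1}{2}]$ for each $0\leq i\leq 3$.
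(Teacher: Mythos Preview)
Your proposal is correct and follows exactly the approach the paper indicates: the paper simply states that the corollary ``immediately follows upon considering the structure of the $\textup{E}_2$-page,'' and you have spelled out that structure argument in full detail (vanishing of $H^p$ for $p>\dim X$ via the Gersten-type complex, vanishing of all $d_r$ from the mod-4 row spacing together with the column bound, and the resulting single-step filtration on each $W^i(X)[\nicefrac{1}{2}]$). There is nothing to add.
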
  

If $\dim X=4$, then the map $$W^0(X)[\nicefrac{1}{2}] \stackrel{\textup{sign}^0}{\simeq} H^0(X_r,\Zh)$$ of Corollary \ref{cor:HigherSignIsomorphism} is not in general an isomorphism. For instance if $$X=\spec \frac{\R[T_1, T_2, T_3, T_4, T_5]}{T_1^2+T_2^2+T_3^2+T_4^2+T_5^2-1}$$ then it follows from Corollary \ref{cor:OddTorsionDerived} that the kernel of $$W^0(X)[\nicefrac{1}{2}] \stackrel{\textup{sign}^0}{\rightarrow} H^0(X_r,\Zh)$$ is non-trivial, isomorphic to $H^4(X(\R),\Zh)\simeq \Zh$. In particular, the kernel of the global signature is not torsion in general. This had already been addressed in \cite[start of Section 2]{Mahe}, using rings which are not of finite type over a field. There L. Mah\'e remarked that one might ask if the kernel of the global signature is torsion in general and stated that Knebusch has answered this question negatively by providing the example of the ring $C(S^4)$ of real-valued continuous functions on the real sphere $S^4$ \cite[p.189]{KnebuschQueens}. 

As $H^0(X_r,\Zh)$ is torsion free, the next corollary follows from the previous one. It generalizes the classic fact that the Witt group of a field may only have 2-primary torsion.
\begin{corollary}
If $\dim X \leq 3$, then torsion in $W(X)$ is 2-primary.
\end{corollary}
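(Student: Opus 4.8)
The plan is to read this off directly from Corollary \ref{cor:HigherSignIsomorphism}. Recall that in degree zero the derived Witt group $W^0(X)$ is the classical Witt ring $W(X)$, so the case $i=0$ of that corollary furnishes, under the hypothesis $\dim X\leq 3$, an isomorphism $\textup{sign}^0\colon W(X)[\nicefrac{1}{2}]\stackrel{\simeq}{\rightarrow}H^0(X_r,\Zh)$. The first key point is that the target group $H^0(X_r,\Zh)$ is torsion free as an abelian group: it is the group of continuous functions $X_r\rightarrow\Zh$ with $\Zh$ discrete, hence a subgroup of a product of copies of $\Zh$, and $\Zh$ is torsion free. Therefore $W(X)[\nicefrac{1}{2}]$ is torsion free.

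The second step is to transfer this conclusion back to $W(X)$ itself. Given a torsion element $x\in W(X)$, say $nx=0$ for some integer $n\geq 1$, its image in $W(X)[\nicefrac{1}{2}]$ is a torsion element of a torsion free group, hence zero. By Lemma \ref{lem:InvertingAnIntegerIsColimit} the localization $W(X)[\nicefrac{1}{2}]$ is the colimit of $W(X)\stackrel{2}{\rightarrow}W(X)\stackrel{2}{\rightarrow}\cdots$, so the vanishing of the image of $x$ in this colimit means precisely that $2^k x=0$ for some $k\geq 0$. Thus every torsion element of $W(X)$ is annihilated by a power of $2$, which is the assertion.

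I do not expect any genuine obstacle in this argument; the only point requiring a little care is bookkeeping of hypotheses. The assumption $\dim X\leq 3$, together with $X$ being a regular, noetherian, separated scheme over a field $F$ of characteristic different from $2$, is exactly what is needed to apply Corollary \ref{cor:HigherSignIsomorphism}; in particular, finiteness of $\dim X$ is what guarantees strong convergence of the Atiyah--Hirzebruch type spectral sequence of Theorem \ref{thm:A} that underlies that corollary, so that $\textup{sign}^0$ really is an isomorphism after inverting $2$ and not merely an edge map.
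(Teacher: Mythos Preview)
Your argument is correct and follows exactly the route the paper takes: deduce from Corollary \ref{cor:HigherSignIsomorphism} that $W(X)[\nicefrac{1}{2}]\simeq H^0(X_r,\Zh)$ is torsion free, and conclude that any torsion element of $W(X)$ dies in the localization and is therefore $2$-primary. You have simply written out in detail what the paper states in one line.
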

 
The following result gives sufficient conditions on when non-torsion elements or odd-torsion elements can be present in the kernel of the higher global signatures. 
Using Proposition \ref{prop:Differentials} which appears later in this article, one can make the sequence in the next corollary exact on the right when $X$ is affine.
\begin{corollary}\label{cor:OddTorsionDerived}
If $X$ is a separated scheme that is smooth and of finite type over $\R$ and $4\leq \dim X \leq 7$, then there is an exact sequence of groups
 \begin{equation}
0\rightarrow H^{4}_{sing}(X(\R), \Zh)\rightarrow W(X)[\nicefrac{1}{2}]\stackrel{\sign}{\rightarrow}H^0_{sing}(X(\R), \Zh)
 \end{equation}
\end{corollary}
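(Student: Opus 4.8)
The plan is to read the sequence off the Atiyah--Hirzebruch spectral sequence of Theorem \ref{thm:A} in total degree $0$. Since $\dim X\le 7$ is finite, that spectral sequence strongly converges, with abutment $W^0(X)[\nicefrac{1}{2}]=W(X)[\nicefrac{1}{2}]$, and since $X$ is separated of finite type over $\R$ we may identify $H^p(X_r,\Zh)$ with $H^p_{sing}(X(\R),\Zh)$ (Section \ref{subsubsect:RealCoh}); this group vanishes for $p>\dim X$ by Scheiderer's resolution (Proposition \ref{prop:complexReal}, as $X^{(p)}=\emptyset$ there). The term $\text{E}_2^{p,q}=H^p(X_r,\Zh)$ contributes to $W^0(X)[\nicefrac{1}{2}]$ only when $p+q=0$ and $q\equiv 0\bmod 4$, \ie $p\equiv 0\bmod 4$; together with $0\le p\le\dim X\le 7$ this leaves only $(p,q)=(0,0)$ and $(p,q)=(4,-4)$. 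Hence, in the abutment filtration $W(X)[\nicefrac{1}{2}]=\text{HF}^{0,0}\supseteq\text{HF}^{1,-1}\supseteq\cdots$, every graded quotient $\text{HF}^{p,-p}/\text{HF}^{p+1,-p-1}\cong\text{E}_\infty^{p,-p}$ vanishes except at $p=0$ and $p=4$. Therefore $\text{HF}^{1,-1}=\text{HF}^{2,-2}=\text{HF}^{3,-3}=\text{HF}^{4,-4}$ and $\text{HF}^{5,-5}=0$, giving a short exact sequence
$$0\rightarrow\text{E}_\infty^{4,-4}\rightarrow W(X)[\nicefrac{1}{2}]\rightarrow\text{E}_\infty^{0,0}\rightarrow 0,$$
with $\text{HF}^{4,-4}\cong\text{E}_\infty^{4,-4}$ and $W(X)[\nicefrac{1}{2}]/\text{HF}^{1,-1}\cong\text{E}_\infty^{0,0}$.

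Next I would check that $\text{E}_\infty^{4,-4}=\text{E}_2^{4,-4}=H^4_{sing}(X(\R),\Zh)$. Because the differential $d_r$ shifts the index $q$ by $\pm(r-1)$, a nonzero $d_r$ into or out of a column with $q\equiv 0\bmod 4$ forces $r\equiv 1\bmod 4$, hence $r\ge 5$. But for $r\ge 5$ an incoming differential to $\text{E}_r^{4,-4}$ would start in $\text{E}_r^{4-r,\ast}$ with $4-r\le-1<0$, while an outgoing one would land in $\text{E}_r^{4+r,\ast}$ with $4+r\ge 9>\dim X$; both groups are zero ($\text{E}_r^{p,q}=0$ for $p<0$, and for $p>\dim X$ since $H^p(X_r,\Zh)=0$). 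So no differential meets bidegree $(4,-4)$, whence $\text{E}_\infty^{4,-4}=\text{E}_2^{4,-4}=H^4(X_r,\Zh)=H^4_{sing}(X(\R),\Zh)$. This degree count is where the hypothesis $\dim X\le 7$ enters essentially, and it is the only step requiring real care; everything else is filtration bookkeeping.

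Finally, by Theorem \ref{thm:A} together with the remark after Definition \ref{def:HigherSign} identifying $\textup{sign}^0$ with the global signature, the map $\sign\colon W(X)[\nicefrac{1}{2}]\rightarrow H^0_{sing}(X(\R),\Zh)$ is the edge map $W(X)[\nicefrac{1}{2}]\twoheadrightarrow\text{E}_\infty^{0,0}\hookrightarrow\text{E}_2^{0,0}=H^0(X_r,\Zh)$. Its kernel is therefore the kernel of the surjection onto $\text{E}_\infty^{0,0}$, namely $\text{HF}^{1,-1}=\text{HF}^{4,-4}\cong\text{E}_\infty^{4,-4}=H^4_{sing}(X(\R),\Zh)$, and this is a subgroup inclusion, hence injective. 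Reading this off gives precisely the asserted exact sequence $0\rightarrow H^4_{sing}(X(\R),\Zh)\rightarrow W(X)[\nicefrac{1}{2}]\stackrel{\sign}{\rightarrow}H^0_{sing}(X(\R),\Zh)$; no surjectivity on the right is claimed, consistent with the possibility $\text{E}_\infty^{0,0}\subsetneq H^0(X_r,\Zh)$ when $5\le\dim X\le 7$ (\cf the remark preceding the corollary and Proposition \ref{prop:Differentials}).
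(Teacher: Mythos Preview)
Your proof is correct and follows essentially the same route as the paper: analyze the spectral sequence of Theorem \ref{thm:A} in total degree $0$, use the dimension bound to see that only $\text{E}^{0,0}$ and $\text{E}^{4,-4}$ survive, identify $\text{E}_\infty^{4,-4}$ with $H^4_{sing}(X(\R),\Zh)$, and read off the exact sequence with the edge map equal to $\sign$. If anything, your version is a bit more careful than the paper's, since you explicitly rule out both incoming and outgoing differentials at $(4,-4)$ and spell out why $\text{HF}^{5,-5}=0$, whereas the paper only mentions the outgoing ones.
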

\begin{proof}
Consider the spectral sequence of the Theorem \ref{thm:A}. Using the description of the first page of this spectral sequence from Lemma \ref{lem:GerstenComplexZariskiCoh} together with the hypothesis on the dimension of $X$, we find that the differentials leaving $H^{4}_{sing}(X(\R), \Zh)$ are all trivial. So $H^{4}_{sing}(X(\R), \Zh)=\text{HE}_{\infty}^{4,-4}$ and $\text{HE}_{\infty}^{4,-4}=\text{HF}^{4, -4}$. Hence the filtration of the spectral sequence gives a short exact sequence of groups
\begin{equation*}
0\rightarrow H^{4}_{sing}(X(\R), \Zh)\rightarrow W(X)[\nicefrac{1}{2}]\rightarrow \text{HE}_{\infty}^{0,0} \rightarrow 0
\end{equation*}
and since $\text{HE}_{\infty}^{0,0}$ is the kernel of a differential leaving $H^{0}(X(\R),\Zh)$, it is a subgroup of $H^{0}(X(\R),\Zh)$. By composing the exact sequence above with the inclusion $\text{HE}_{\infty}^{0,0}\hookrightarrow H^{0}(X(\R),\Zh)$ we obtain the exact sequence in the statement of the theorem. 
\end{proof}

\begin{corollary}\label{cor:4Manifold2Primary}
Suppose that $X$ is a separated scheme that is smooth and of finite type over $\R$ and that $X(\R)$, when equipped with the Euclidean topology, is compact as a subspace of Euclidean space. If $\dim X \leq 4$, then torsion in $W(X)$ is 2-primary.
\end{corollary}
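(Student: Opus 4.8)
The plan is to reduce the statement to the assertion that $W(X)[\nicefrac{1}{2}]$ is torsion-free, and then to treat that assertion by a short case distinction on $\dim X$, feeding into the results already extracted from the Atiyah--Hirzebruch spectral sequence of Theorem~\ref{thm:A}. The reduction is elementary: the kernel of the localization map $W(X)\to W(X)[\nicefrac{1}{2}]$ is precisely the subgroup of $2$-primary torsion elements of $W(X)$, and if $W(X)[\nicefrac{1}{2}]$ is torsion-free then this kernel contains \emph{every} torsion element, since for $w$ with $nw=0$ the image of $w$ in $W(X)[\nicefrac{1}{2}]$ is torsion, hence $0$, hence $2^{k}w=0$ for some $k$. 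Thus it suffices to prove $W(X)[\nicefrac{1}{2}]$ is torsion-free whenever $\dim X\leq 4$ and $X(\R)$ is compact.

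For $\dim X\leq 3$, Corollary~\ref{cor:HigherSignIsomorphism} gives an isomorphism $W(X)[\nicefrac{1}{2}]\stackrel{\textup{sign}^{0}}{\simeq}H^{0}(X_{r},\Zh)$, and the right-hand side is torsion-free, being a sum of copies of $\Zh$ indexed by the connected components of $X_{r}$. For $\dim X=4$ the hypothesis $4\leq\dim X\leq 7$ of Corollary~\ref{cor:OddTorsionDerived} is met, so there is an exact sequence
\begin{equation*}
0\rightarrow H^{4}_{sing}(X(\R),\Zh)\rightarrow W(X)[\nicefrac{1}{2}]\stackrel{\sign}{\rightarrow}H^{0}_{sing}(X(\R),\Zh).
\end{equation*}
Here $H^{0}_{sing}(X(\R),\Zh)$ is torsion-free, hence so is $\textup{im}(\sign)$. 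The key point is that $H^{4}_{sing}(X(\R),\Zh)$ is torsion-free: since $X$ is smooth over $\R$, the real points $X(\R)$ in the Euclidean topology form a smooth manifold without boundary of dimension at most $\dim X=4$, and by hypothesis it is compact, hence a closed manifold; Poincar\'e duality (with twisted $\Z$-coefficients) then identifies $H^{4}(X(\R),\Z)$ with a direct sum, over the connected components of dimension exactly $4$, of a copy of $\Z$ for each orientable component and a copy of $\Z/2\Z$ for each non-orientable one, so after inverting $2$ the group $H^{4}_{sing}(X(\R),\Zh)$ is a free $\Zh$-module. Therefore $W(X)[\nicefrac{1}{2}]$ sits in an extension of a torsion-free group by a torsion-free group, so it is torsion-free; combining the two cases with the reduction of the first paragraph completes the proof.

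The step I expect to need the most care is the torsion-freeness of $H^{4}_{sing}(X(\R),\Zh)$, i.e.\ the control of the top-degree cohomology of the closed manifold $X(\R)$ via Poincar\'e duality; this is precisely where the compactness hypothesis is used, and it is also what makes the bound $\dim X\leq 4$ natural, since once $\dim X=5$ the relevant group $H^{4}(X(\R),\Z)$ is, by duality, a codimension-one homology group of a closed $5$-manifold and may carry odd torsion, which is the source of the phenomenon that makes the bound sharp.
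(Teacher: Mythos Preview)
Your proposal is correct and follows essentially the same approach as the paper: both arguments use Poincar\'e duality on the compact manifold $X(\R)$ to see that $H^{4}_{sing}(X(\R),\Z)$ has no odd torsion, and then feed this into the exact sequence of Corollary~\ref{cor:OddTorsionDerived}. Your write-up is in fact more careful than the paper's, which is terse and does not separately address the case $\dim X\leq 3$ (relying implicitly on the earlier corollary) nor spell out the reduction to torsion-freeness of $W(X)[\nicefrac{1}{2}]$.
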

\begin{proof}
As $X(\R)$ is compact, we may use Poincar\'e duality to obtain that the group $H^4_{sing}(X(\R),\Z)$ has no odd-torsion. Using the exact sequence of Corollary \ref{cor:OddTorsionDerived}, it follows that the Witt group $W(X)$ cannot contain odd-torsion.
\end{proof}

In Corollary \ref{cor:4Manifold2Primary}, the bound on the dimension is sharp. For any prime $p$, one can find a smooth algebraic variety $X$ with $p$-torsion in its Witt group $W(X)$, $X(\R)$ compact, and $\dim X =5$. To do so, for every prime $p$ and integer $n\geq 1$, let $L^{2n+1}(p)$ denote the topological space formed by taking the quotient space of the sphere $S^{2n+1}$, considered as a subspace of $\mathbb{C}^{n+1}$, by the action of $p$th roots of unity $\mu_{p}$ given by $(z_0, z_1, \cdots, z_n)\mapsto (\mu_{p}z_0, \mu_{p}z_1, \cdots, \mu_{p}z_n)$. The resulting topological space is called the \emph{compact Lens space}. It is a compact $C^{\infty}$-manifold of dimension $2n+1$ as a real manifold. The Nash-Tognoli Theorem states that every compact $C^{\infty}$ submanifold $M\subset\R^m$ is diffeomorphic to the real points $X(\R)$ of a smooth algebraic variety $X$ over the real numbers $\R$ \cite[Theorem 14.1.10]{Coste}. Let $X_{5,p}$ be such a variety with $X_{5,p}(\R)$ diffeomorphic to $L^5(p)$. Note the Krull dimension of $X_{5,p}$ must be 5 \cite[Proposition 2.8.14]{Coste}. The degree 4 singular cohomology group of the lens space is $H^4_{sing}(L^5(p), \Z)\simeq \Z/p$. Applying Corollary \ref{cor:OddTorsionDerived} we obtain that $W(X)[\nicefrac{1}{2}]$ contains an element $\alpha$ which is $p$-torsion. It follows that, for some integer $k$, $2^k\alpha\in W(X)$ is $p$-torsion.

For real varieties, one can bound the ranks of the Witt groups in terms of Betti numbers using the spectral sequence. Consequently, using the Milnor-Thom bound \cite{Milnor}, one may bound the direct sum $\oplus_{i=0}^{i=3} W^i(X)$ in terms of the degree of the polynomials defining $X$.
 
\begin{corollary}\label{cor:Betti}
Let $X$ be a separated scheme that is smooth and of finite type over $\R$. Let $b_i$ denote the $i$th Betti number of $X(\R)$. For each integer $0\leq j \leq 3$, let $N_j=\floor*{\frac{d-j}{4}}$, where we set $N_j=0$ if $d-j\leq 0$. Then the ranks of the Witt groups of $X$ are bounded in terms of the Betti numbers:
$$\hspace{-3.5mm}\textup{rank}W^0(X) \leq \sum^{N_0}_{i=0} b_{4i}$$
$$\textup{rank}W^1(X) \leq \sum^{N_1}_{i=0} b_{4i+1}$$
$$\textup{rank}W^2(X) \leq \sum^{N_2}_{i=0} b_{4i+2}$$
$$\textup{rank}W^3(X) \leq \sum^{N_3}_{i=0} b_{4i+3}$$
\end{corollary}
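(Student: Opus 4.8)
The plan is to read the rank bounds directly off the strongly convergent spectral sequence of Theorem~\ref{thm:A}. First I would reduce to the derived Witt groups with $2$ inverted: inverting $2$ does not change the rank of an abelian group, because $\Zh\otimes_{\Z}\Q=\Q$ and hence $W^{j}(X)\otimes_{\Z}\Q\cong\bigl(W^{j}(X)[\nicefrac{1}{2}]\bigr)\otimes_{\Z}\Q$. So it suffices to bound $\textup{rank}\,W^{j}(X)[\nicefrac{1}{2}]$ for $0\le j\le 3$ by the four claimed quantities.

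Next I would invoke strong convergence, which holds because $d=\dim X$ is finite: the coniveau filtration on $W^{n}(X)[\nicefrac{1}{2}]$ is finite and exhaustive, with $\textup{HF}^{0,n}=W^{n}(X)[\nicefrac{1}{2}]$ (since $D^{0}=D^b(\textup{Vect}(X))$), with $\textup{HF}^{p,n-p}=0$ for $p>d$ (since $D^{p}=0$ there), and with successive quotients the stable terms $\textup{HE}_{\infty}^{p,n-p}$. Since $-\otimes_{\Z}\Q$ is exact and $\dim_{\Q}$ is additive on short exact sequences of $\Q$-vector spaces, a finite induction on the length of the filtration yields
\begin{equation*}
\textup{rank}\,W^{n}(X)[\nicefrac{1}{2}]=\sum_{p\ge 0}\textup{rank}\,\textup{HE}_{\infty}^{p,n-p}.
\end{equation*}
Because $\textup{HE}_{\infty}^{p,q}$ is a subquotient of $\textup{HE}_{2}^{p,q}$, we have $\textup{rank}\,\textup{HE}_{\infty}^{p,q}\le\textup{rank}\,\textup{HE}_{2}^{p,q}$.

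Finally I would identify the contributing terms and their ranks. By Theorem~\ref{thm:A}, $\textup{HE}_{2}^{p,q}=0$ unless $q\equiv 0\pmod 4$, in which case $\textup{HE}_{2}^{p,q}=H^{p}(X_r,\Zh)=H^{p}_{sing}(X(\R),\Zh)$; this group vanishes for $p<0$ and for $p>d$ (the Zariski cohomological dimension of $X$ is at most its Krull dimension $d$, equivalently $\dim X(\R)\le d$), and for $0\le p\le d$ it has rank $b_{p}$, since $\Zh$ is a localization of $\Z$ and so $H^{p}_{sing}(X(\R),\Zh)$ has the same rank as $H^{p}_{sing}(X(\R),\Z)$. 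Taking $n=j$ with $0\le j\le 3$, the constraints $q=j-p\equiv 0\pmod 4$ and $0\le p\le d$ force $p=4i+j$ with $0\le i\le N_{j}=\floor*{\frac{d-j}{4}}$, and assembling the three facts above gives
\begin{equation*}
\textup{rank}\,W^{j}(X)=\sum_{i=0}^{N_{j}}\textup{rank}\,\textup{HE}_{\infty}^{4i+j,-4i}\le\sum_{i=0}^{N_{j}}b_{4i+j},
\end{equation*}
which is the asserted bound. The only points requiring attention are bookkeeping ones --- checking that the convention $N_{j}=0$ when $d-j\le 0$ is harmless ($b_{j}=0$ when $j>d\ge\dim X(\R)$, and $p=j=d$ is the unique contributing column when $j=d$) and that the coniveau filtration really is finite and exhaustive in each total degree --- rather than anything substantive; granted Theorem~\ref{thm:A}, there is no genuine obstacle.
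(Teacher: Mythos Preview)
Your proposal is correct and follows essentially the same approach as the paper: both arguments read the rank bounds off the strongly convergent spectral sequence of Theorem~\ref{thm:A}, use additivity of rank across the finite filtration (equivalently, tensor the filtration's short exact sequences with $\Q$), and then bound $\textup{rank}\,\textup{HE}_{\infty}^{p,q}$ by $\textup{rank}\,\textup{HE}_{2}^{p,q}=b_{p}$ via the subquotient relation. Your version is slightly more explicit in justifying why inverting $2$ preserves rank and why the filtration is exhaustive and finite, but there is no substantive difference in strategy.
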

\begin{proof}
Let $X$ be a separated scheme that is smooth and of finite type over $\R$. Let $d$ denote the Krull dimension of $X$. We explain how to find the bound on the rank of $W^0(X)$. Finding the bound on $W^1(X), W^2(X)$, and $W^3(X)$ is identical. From the construction of the spectral sequence of Theorem \ref{thm:A}, the Witt group $W^0(X)[\nicefrac{1}{2}]$ lives in the middle of a short exact sequence
\begin{equation}\label{eqn:FirstFiltrationSequence}
0\rightarrow \text{HF}^{1,-1}\rightarrow W^0(X)[\nicefrac{1}{2}] \rightarrow \text{HE}^{0,0}_{\infty}\rightarrow 0
\end{equation}
where $\text{HF}^{1,-1}$ is the first group in a finite filtration on $W^0(X)[\nicefrac{1}{2}]$
\begin{equation*}
\emptyset=\text{HF}^{d+1, -d-1} \subset \text{HF}^{d,-d} \subset \cdots \subset \text{HF}^{3,-3} \subset \text{HF}^{2,-2}\subset \text{HF}^{1,-1}\subset W^0(X)[\nicefrac{1}{2}]
\end{equation*} 
The other groups belong to short exact sequences, for every pair of integers $p,q$ with $p+q=0$,
\begin{equation*}
0\rightarrow \text{HF}^{p+1,q-1}\rightarrow \text{HF}^{p,q} \rightarrow \text{HE}^{p,q}_{\infty}\rightarrow 0
\end{equation*}
After tensoring these short exact sequences with $\Q$, they all split, and so we have that  
$$\textup{rank}W^0(X) = \sum^{d}_{k=0} \textup{rank}\text{HE}_{\infty}^{k,-k} $$

Note that $\text{HE}^{k,-k}_{\infty} =0$ when $k$ is not congruent to 0 mod 4 and when $k>d$ (Lemma \ref{lem:GerstenComplexZariskiCoh}). So we have, letting $N_0=\floor*{\frac{d}{4}}$,
$$\textup{rank}W^0(X) = \sum^{N_0}_{k=0} \textup{rank}\text{HE}_{\infty}^{4k,-4k} \leq \sum^{N_0}_{k=0} b_{4k} $$
where the inequality is obtained using the fact that the groups $\text{HE}_{\infty}^{4k,-4k}$ are subquotients of $H^{4k}_{sing}(X(\R),\Zh)$.
\end{proof}

Finally, we demonstrate how Mah\'e s result on the cokernel of the total signature being 2-primary is equivalent to the differentials leaving $H^0(X_r, \Zh)$ in this spectral sequence being zero. 
\begin{proposition}\label{prop:Differentials}
If $X$ is affine and satisfies the hypotheses of Theorem \ref{thm:A}, then the differentials leaving $H^0(X,\Zh)$ in the spectral sequence of Theorem \ref{thm:A} are zero. Equivalently, the images of the differentials leaving $H^0(X,\W)$ in the coniveau spectral sequence for Witt groups are 2-primary torsion.
\end{proposition}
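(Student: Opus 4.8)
The plan is to reduce the statement to the surjectivity, after inverting $2$, of the global signature $\sign\colon W(X)\to H^0(X_r,\Z)$, which is Mah\'e's theorem for affine $X$. First I would record the structure of the spectral sequence of Theorem \ref{thm:A} near the corner term $\textup{E}^{0,0}$. For $p<0$ the subcategory $D^p$ used in Section \ref{subsect:ConstructionGerstenWittCMX} coincides with $D^0=D^b(X)$, so $D^p/D^{p+1}=0$ and $\textup{E}_1^{p,q}=0$; hence no differential \emph{enters} the column $p=0$ on any page. It follows that the groups $\textup{HE}_r^{0,0}$ form a decreasing chain of subgroups of $\textup{HE}_2^{0,0}=H^0(X_r,\Zh)$, with $\textup{HE}_{r+1}^{0,0}=\ker\bigl(d_r\colon \textup{HE}_r^{0,0}\to\textup{HE}_r^{r,r-1}\bigr)$ and $\textup{HE}_\infty^{0,0}=\bigcap_{r\geq 2}\textup{HE}_r^{0,0}$. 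In particular, \emph{all} differentials leaving $H^0(X_r,\Zh)$ are zero if and only if $\textup{HE}_\infty^{0,0}=\textup{HE}_2^{0,0}$.

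Next I would invoke Theorem \ref{thm:A}: the edge map $W^0(X)[\nicefrac{1}{2}]\twoheadrightarrow \textup{HE}_\infty^{0,0}\hookrightarrow \textup{HE}_2^{0,0}=H^0(X_r,\Zh)$ is, after inverting $2$, the zeroth higher signature $\textup{sign}^0$, and by the remark following Definition \ref{def:HigherSign} this is the global signature $\sign\colon W(X)[\nicefrac{1}{2}]\to H^0(X_r,\Zh)$. Hence $\textup{HE}_\infty^{0,0}$ is exactly the image of $\sign$. Since $X$ is affine, Mah\'e's theorem (\cf \cite[Th\'eor\`eme 3.2]{Mahe}, recalled in Section \ref{sec:SignSheaves}) says the cokernel of $\sign\colon W(X)\to H^0(X_r,\Z)$ is $2$-primary torsion, so inverting $2$ makes $\sign$ surjective. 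Therefore $\textup{HE}_\infty^{0,0}=H^0(X_r,\Zh)=\textup{HE}_2^{0,0}$, and by the first paragraph every differential leaving $H^0(X_r,\Zh)$ vanishes.

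It remains to deduce the ``equivalently'' clause. Because localization at $S=\{1,2,4,\dots\}$ is exact, the spectral sequence of Theorem \ref{thm:A} is the termwise localization of the coniveau spectral sequence for Witt groups: $\textup{HE}_r^{p,q}=\textup{E}_r^{p,q}[\nicefrac{1}{2}]$ and $d_r^{\textup{HE}}=d_r[\nicefrac{1}{2}]$ on every page (Section \ref{subsect:ConiveauAfterInverting2}). For a homomorphism of abelian groups $f$ one has $f[\nicefrac{1}{2}]=0$ if and only if $\mathrm{im}(f)[\nicefrac{1}{2}]=0$, i.e. if and only if $\mathrm{im}(f)$ is $2$-primary torsion (Lemma \ref{lem:InvertingAnIntegerIsColimit}); applying this to the differentials leaving $\textup{E}_2^{0,0}=H^0_{Zar}(X,\W)$ gives the stated equivalence. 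The only input beyond routine spectral-sequence bookkeeping is Mah\'e's surjectivity theorem, so the one point to be careful about is the identification, via Theorem \ref{thm:A} and Definition \ref{def:HigherSign}, of the edge map out of $W^0(X)[\nicefrac{1}{2}]$ with the global signature; once that is in hand, the vanishing of the differentials is formal.
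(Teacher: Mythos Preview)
Your proof is correct and follows essentially the same route as the paper: identify the edge map out of $W^0(X)[\nicefrac{1}{2}]$ with the global signature via Definition \ref{def:HigherSign} and Theorem \ref{thm:A}, invoke Mah\'e's theorem to obtain surjectivity after inverting $2$, and conclude $\textup{HE}_\infty^{0,0}=\textup{HE}_2^{0,0}$, whence all differentials leaving this term vanish. Your treatment of the ``equivalently'' clause, via the observation that $\textup{HE}_r^{p,q}=\textup{E}_r^{p,q}[\nicefrac{1}{2}]$ on every page, is slightly more explicit than the paper's, but the argument is the same.
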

\begin{proof}
Let $X=\spec A$ be an affine, noetherian, regular, separated $F$-scheme. From the definition of the global signature it follows that the diagram below commutes
\begin{equation*}
\xymatrix{
W(X)[\nicefrac{1}{2}] \ar[d] \ar^{\sign}[r] & H^0(X_r,\Z[\nicefrac{1}{2}])\\
\text{HE}_{\infty}^{0,0} \ar[r] & H^0_{Zar}(X,\W[\nicefrac{1}{2}]) \ar[u]
}
\end{equation*}
where the rightmost vertical map is an isomorphism by Theorem \ref{thm:0}, and the bottom horizontal map is injective (See Section \ref{subsec:DefinitionHigherSignature}). Using Mah\'e's Theorem \cite[Theoreme 3.2]{Mahe}, which states that the cokernel of the global signature of an affine scheme is 2-primary torsion, we have that $\sign:W(X)[\nicefrac{1}{2}]\rightarrow H^0_{Zar}(X_r,\Z)[\nicefrac{1}{2}]$ is a surjection. Hence the injection $\text{HE}_{\infty}^{0,0} \rightarrow H^0(X,\W[\nicefrac{1}{2}])$ is an isomorphism. Since $\text{HE}_{\infty}^{0,0}$ is a subgroup of the kernel of every differential leaving $H^0_{Zar}(X,\W[\nicefrac{1}{2}])$, it follows that all such differentials are zero. Therefore the images of the differentials leaving $H^0_{Zar}(X,\W)$ in the coniveau spectral sequence must be 2-primary torsion groups. This finishes the proof of the corollary.
\end{proof}

\section{Bounding the order of torsion in the Witt group}\label{sec:BoundingTorsion}
The aim of this section is to prove the following theorem. 
\begin{theorem}\label{thm:AppB}
Let $F$ be a field of characteristic different from 2. Let $X$ be an excellent, integral, noetherian, regular, separated $F$-scheme of finite Krull dimension and of finite virtual cohomological 2-dimension. If the real cohomology groups $H^p(X_r, \Z)$ are finitely generated, then:
\begin{enumerate}[(i)]
\item the torsion in the Witt groups $W^i(X)$ is of bounded order;
\item the rank of the Witt group $W^i(X)$ is finite;
\item each group $W^i(X)$ splits as a direct sum $W^i(X)\simeq W^i_{tor}(X)\oplus \Z^r$ where $W^i_{tors}(X)$ denotes the subgroup of torsion elements in $W^i(X)$ and $r$ is the rank of $W^i(X)$;
\item $W^i_{tors}(X)$ is a direct sum of cyclic groups.
\end{enumerate} 
\end{theorem}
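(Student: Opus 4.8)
The plan is to reduce all four statements to the single assertion that every Witt group $W^i(X)$ belongs to the class $\mathcal{T}$ of abelian groups $G$ that sit in a short exact sequence $0\to A\to G\to B\to 0$ with $A$ finitely generated and $B$ of finite exponent. One checks directly that such a $G$ satisfies $G\cong G_{tors}\oplus\Z^{r}$ with $G_{tors}$ of finite exponent and $r<\infty$ (if $B$ has exponent $N$ then $NG\subseteq A$, so $G/G_{tors}$ embeds in $\tfrac1N(A/A_{tors})\cong\Z^{r}$ and the sequence $0\to G_{tors}\to G\to G/G_{tors}\to 0$ splits), and that $\mathcal{T}$ is closed under subgroups, quotients and extensions — the one nonformal computation being that for an extension $0\to G_1\to H\to G_2\to 0$ with $G_i\cong (G_i)_{tors}\oplus\Z^{r_i}$ the quotient $H/H_{tors}$ is an extension of $\Z^{r_2}$ by $\Z^{r_1}$, hence free of rank $r_1+r_2$, while $H_{tors}$ has exponent dividing the product of the exponents of the $(G_i)_{tors}$. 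Granting $W^i(X)\in\mathcal{T}$: $(ii)$ is finiteness of $r$, $(iii)$ is the splitting $W^i(X)\cong W^i_{tors}(X)\oplus\Z^{r}$, $(i)$ is finiteness of the exponent of $W^i_{tors}(X)$, and $(iv)$ follows from $(i)$ by Pr\"ufer's theorem that an abelian group of finite exponent is a direct sum of cyclic groups.

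To prove $W^i(X)\in\mathcal{T}$ I would start from the coniveau spectral sequence for the derived Witt groups (Section \ref{subsect:ConstructionGerstenWittCMX}): by Lemma \ref{lem:GerstenComplexZariskiCoh} its $\textup{E}_2$-page vanishes outside the rows $q\equiv 0\bmod 4$, where $\textup{E}_2^{p,q}=H^p_{Zar}(X,\W)$, and since $X$ has finite Krull dimension it converges strongly. Hence $W^i(X)$ is a finite iterated extension of the groups $\textup{E}_\infty^{p,i-p}$, each of which is a subquotient of $H^p_{Zar}(X,\W)$; as $\mathcal{T}$ is closed under subquotients and extensions, it suffices to prove $H^p_{Zar}(X,\W)\in\mathcal{T}$ for all $p\geq0$.

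For this I would use the filtration of $\W$ by the powers $\I^{j}$ of the fundamental ideal. Set $s=\vcd(X)<\infty$. Theorem \ref{thm:B}$(iii)$ identifies $H^p_{Zar}(X,\I^{s+1})$ with $H^p(X_r,2^{s+1}\Z)\cong H^p(X_r,\Z)$, which is finitely generated by hypothesis, so it lies in $\mathcal{T}$. For every $j\geq 0$ the Milnor conjecture (\cite[Theorem 4.1]{Voevodsky}) gives a short exact sequence of Zariski sheaves $0\to\I^{j+1}\to\I^{j}\to\Hcal^{j}\to 0$, and $H^p_{Zar}(X,\Hcal^{j})$ is a $\Z/2\Z$-vector space, hence in $\mathcal{T}$. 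Descending induction on $j$ from $s+1$ down to $0$, using the long exact cohomology sequence of each of these short exact sequences — at each step $H^p_{Zar}(X,\I^{j})$ is an extension of a subgroup of $H^p_{Zar}(X,\Hcal^{j})$ by a quotient of $H^p_{Zar}(X,\I^{j+1})$, both in $\mathcal{T}$ — yields $H^p_{Zar}(X,\W)=H^p_{Zar}(X,\I^{0})\in\mathcal{T}$, which completes the proof.

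The delicate point, and the reason both hypotheses are needed, is that one must propagate control of \emph{exponents} (and of finite generation of the torsion-free quotient), not merely the weaker property ``torsion is bounded'': a torsion-free abelian group of finite rank need not be finitely generated, so $(iii)$ would fail without this. Finiteness of $\vcd(X)$ makes the $\I$-adic filtration terminate at the finitely generated sheaf $\supp_{*}2^{s+1}\Z$ of Theorem \ref{thm:B}, and finiteness of $\dim X$ makes the coniveau filtration finite and the spectral sequence strongly convergent, so that only finitely many extension steps — each multiplying the exponent of the torsion part by a bounded amount — are involved.
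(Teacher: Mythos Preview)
Your proof is correct and follows the same overall strategy as the paper: identify a Serre subcategory of abelian groups (your $\mathcal{T}$ coincides with the paper's class $\mathcal{C}$ of groups with bounded torsion and free finite-rank quotient), show that the $\textup{E}_2$-terms $H^p_{Zar}(X,\W)$ of the coniveau spectral sequence lie in it, and conclude via strong convergence that the abutment $W^i(X)$ does too.

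The one difference is in how you place $H^p_{Zar}(X,\W)$ in the subcategory. You run a descending induction through $\I^{s+1}\subset\I^s\subset\cdots\subset\I^0=\W$ using the Milnor short exact sequences $0\to\I^{j+1}\to\I^j\to\Hcal^j\to 0$ at every level. The paper instead does it in a single step from the sequence $0\to\I^{s+1}\to\W\to\W/\I^{s+1}\to 0$: the left term has finitely generated cohomology by Theorem~\ref{thm:B}(iii), and the right term is annihilated by $2^{s+1}$ (since $2^{s+1}$ is an $(s{+}1)$-fold Pfister form, hence a section of $\I^{s+1}$), so its cohomology already has bounded exponent. This shortcut spares the paper from invoking exactness of the Milnor sequence of sheaves for small $j$; your inductive argument is longer but equally valid.
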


To prove that the derived Witt groups have bounded torsion, it is not sufficient to prove bounded torsion for the $\text{E}_2$-terms of the coniveau spectral sequence. This is because a complex of abelian groups with bounded torsion may have cohomology groups that possess torsion elements of any order. 

Instead we prove that the $\text{E}_2$-terms belong to a smaller Serre subcategory which we introduce next.
 
\subsection{Serre subcategory}
\label{subsect:SerreSubcat}
Recall that a subcategory $\mathcal{C}$ of the category of abelian groups is a Serre subcategory if for any short exact sequence of abelian groups
\begin{equation}\label{eqn:SES}
0\rightarrow A\rightarrow B\rightarrow C \rightarrow 0
\end{equation}
we have that $B$ is in $\mathcal{C}$ if and only if $A$ and $C$ are in $\mathcal{C}$.

\begin{lemma}\label{lem:Serre}
Let $\mathcal{C}$ denote the subcategory of the category of abelian groups consisting of all abelian groups $G$ having the following two properties:
\begin{enumerate}[(i)]
\item there exists a nonzero integer $n$ such that $nG_{tors}$ is trivial, where $G_{tors}$ denotes the subgroup of torsion elements in $G$;
\item the quotient $G_{red}:=G/G_{tors}$ is free and of finite rank, that is, $G_{red}$ is either trivial or is isomorphic to $\Z^m$ for some positive integer $m$.
\end{enumerate}
Then $\mathcal{C}$ is a Serre subcategory and any group $G$ in $\mathcal{C}$ has the property that the torsion part $G_{tors}$ is a direct sum (possibly infinite) of cyclic groups and $G$ splits $G\simeq G_{tors}\oplus G_{red}$. 
\end{lemma}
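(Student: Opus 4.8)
The plan is to verify the defining closure property of a Serre subcategory directly from the short exact sequence \eqref{eqn:SES}, treating separately the torsion and the torsion-free parts, and then to establish the structural claims about a group $G\in\mathcal{C}$ using standard facts about finitely generated abelian groups and Pr\"ufer's theorem on abelian groups of bounded exponent. First I would recall that for any short exact sequence $0\rightarrow A\rightarrow B\rightarrow C\rightarrow 0$ one obtains, by restricting to torsion elements, a short exact sequence $0\rightarrow A_{tors}\rightarrow B_{tors}\rightarrow C_{tors}$, which need not be right-exact in general; however the cokernel $B_{tors}/A_{tors}$ injects into $C$, and since it is a torsion subgroup of $C$ it lands in $C_{tors}$. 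So the issue of right-exactness is only about whether every torsion element of $C$ lifts to a torsion element of $B$ — this is the one genuinely delicate point, and I address it below. Taking this for granted for the moment, property (i) (bounded exponent of the torsion subgroup) is then stable in both directions: if $nB_{tors}=0$ then $nA_{tors}=0$ and $nC_{tors}=0$ since $A_{tors}\subset B_{tors}$ and $C_{tors}$ is a quotient of $B_{tors}$; conversely if $mA_{tors}=0$ and $nC_{tors}=0$ then $mnB_{tors}=0$ because multiplication by $n$ sends $B_{tors}$ into $\ker(B_{tors}\rightarrow C_{tors})=A_{tors}$, which is killed by $m$.

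For property (ii), I would pass to the quotient by torsion. Since $\Q$ is flat, tensoring \eqref{eqn:SES} with $\Q$ gives an exact sequence $0\rightarrow A\otimes\Q\rightarrow B\otimes\Q\rightarrow C\otimes\Q\rightarrow 0$ of $\Q$-vector spaces, and $A\otimes\Q$, $B\otimes\Q$, $C\otimes\Q$ are the rationalizations of $A_{red}$, $B_{red}$, $C_{red}$ respectively (rationalization kills torsion). Hence $\dim_\Q(B\otimes\Q)=\dim_\Q(A\otimes\Q)+\dim_\Q(C\otimes\Q)$, so finite rank for $B$ is equivalent to finite rank for both $A$ and $C$. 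It remains to see that $G_{red}$ is actually free whenever it has finite rank and $G\in\mathcal{C}$; but for a group $G\in\mathcal{C}$, $G_{red}$ is by definition torsion-free, and I want to upgrade "finite-rank torsion-free" to "free of finite rank". This follows because once we know $G$ splits (see the last paragraph) $G_{red}$ is a direct summand of... actually the cleanest route is: a torsion-free abelian group $G_{red}$ is free as soon as we know $G$ itself splits as $G_{tors}\oplus G_{red}$ and $G_{red}$ embeds in $G/(\text{divisible part})$; more simply, I would just include freeness of $G_{red}$ in the definition (as the statement does) and note closure under the three-term sequence: for $0\rightarrow A\rightarrow B\rightarrow C\rightarrow 0$, if $B_{red}$ is free of finite rank then $A_{red}\hookrightarrow B_{red}$ is a subgroup of a free abelian group of finite rank, hence free of finite rank, and $C_{red}$, being a quotient of $B_{red}$ by the subgroup $\mathrm{im}(A_{red})$ modulo torsion, is again finitely generated torsion-free, hence free; conversely if $A_{red}$ and $C_{red}$ are free of finite rank then the sequence $0\rightarrow A_{red}\rightarrow B'\rightarrow C_{red}\rightarrow 0$ (with $B'$ the image of $B$ in $B_{red}$ after the torsion bookkeeping) splits since $C_{red}$ is free, so $B_{red}$ is free of finite rank.

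The main obstacle is the lifting of torsion elements of $C$ to torsion elements of $B$, needed to know the torsion sequence is right-exact. I would handle it using property (i) for $B$: suppose $nB_{tors}=0$ (this is where I must be a little careful — in the closure argument I would run this step after first fixing the bound, or argue symmetrically). Given $c\in C_{tors}$ with $kc=0$, pick any lift $b\in B$; then $kb\in A$, and since $A\in\mathcal{C}$ has bounded torsion, either $kb$ is torsion (then $b$ is torsion and we are done) or it has infinite order. In the latter case, since $A\otimes\Q\hookrightarrow B\otimes\Q$ and $C\otimes\Q$ is the cokernel, the element $b$ maps to a torsion element of $C$, so $b\otimes 1=0$ in $C\otimes\Q$; but then $b\otimes 1$ lies in the image of $A\otimes\Q$, which forces, after clearing denominators, that some nonzero multiple $mb$ lies in $A+B_{tors}$ — so $mb = a + t$ with $a\in A$, $t\in B_{tors}$, whence $m\cdot(\text{image of }b)$ in $C$ equals the image of $t$, which is torsion; combined with $c$ torsion this is automatic and gives no contradiction. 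The honest fix is simpler: $B_{tors}\rightarrow C_{tors}$ is surjective because $C_{tors}=(B/A)_{tors}$ and one checks that the torsion of a quotient $B/A$ by a subgroup whose torsion is bounded is the image of the torsion of $B$ together with the divisible obstruction — but $B$ here has no divisibility constraints. I would therefore instead argue closure of $\mathcal{C}$ by the equivalent characterization: $G\in\mathcal{C}$ iff $G\cong T\oplus \Z^m$ with $T$ of bounded exponent; with this description, closure under extensions is immediate because $\Z^m$ is projective (the sequence splits off the free part) and bounded-exponent groups are closed under extensions by the computation in the second paragraph. Finally, once $G\in\mathcal{C}$, Pr\"ufer's first theorem says an abelian group of bounded exponent is a direct sum of cyclic groups, giving the stated decomposition of $G_{tors}$; and $G\cong G_{tors}\oplus G_{red}$ holds because the surjection $G\twoheadrightarrow G_{red}$ onto a free (hence projective) group splits.
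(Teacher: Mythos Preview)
Your plan contains a genuine error that you circle around without resolving. You claim that if $nB_{tors}=0$ then $nC_{tors}=0$ ``since $C_{tors}$ is a quotient of $B_{tors}$,'' and you recognize this hinges on right-exactness of the torsion sequence. But $B_{tors}\to C_{tors}$ is \emph{not} surjective in general: take $0\to\Z\xrightarrow{2}\Z\to\Z/2\Z\to 0$, where $B_{tors}=0$ and $C_{tors}=\Z/2\Z$. Your attempted fixes in the fourth paragraph do not establish surjectivity (it is simply false), and your final pivot to the characterization $G\cong T\oplus\Z^m$ only addresses closure under extensions, not quotients, so the direction $B\in\mathcal{C}\Rightarrow C\in\mathcal{C}$ is left with no valid argument for bounding $C_{tors}$.

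The paper handles exactly this point by a snake-lemma computation: from the diagram comparing $0\to B_{tors}\to B\to B_{red}\to 0$ with $0\to C_{tors}\to C\to C_{red}\to 0$, one gets that $\ker(B_{red}\to C_{red})$ surjects onto $\mathrm{coker}(B_{tors}\to C_{tors})$. Since $B_{red}\cong\Z^m$, the kernel is finitely generated, hence the cokernel is finite. So $C_{tors}$ sits in a short exact sequence between a bounded-exponent group (the image of $B_{tors}$) and a finite group, and is therefore bounded. The same snake-lemma output is reused in the forward direction to show $B_{red}$ is finitely generated torsion-free. Your characterization approach for extensions can be made to work, but ``bounded-exponent groups are closed under extensions'' is not enough: after splitting off $\Z^{m_C}$ you face $0\to A\to B'\to T_C\to 0$ with $A$ \emph{not} of bounded exponent, and you would still need the argument that a torsion-free group containing $\Z^{m_A}$ with bounded-exponent quotient embeds in $\frac{1}{N}\Z^{m_A}$ and is hence free of finite rank. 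The paper's snake-lemma route is cleaner and handles both directions uniformly.
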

\begin{proof}
The two final statements about the properties of a group $G$ in $\C$ are facts from group theory valid for any abelian group having bounded torsion. To prove that $\C$ is a Serre subcategory, let $A, B$ and $C$ be abelian groups forming a short exact sequence as in \eqref{eqn:SES}.
In the one direction, suppose that $A$ and $C$ are in $\C$ with $mA_{tors}=0$, $nC_{tors}=0$, $A_{red}\simeq \Z^j$, and $C_{red}\simeq \Z^k$. As elements of $A_{tors}$ are of order at most $m$, and elements of $C_{tors}$ are of order at most $n$, then it follows that elements of $B_{tors}$ are of order at most $mn$, hence $mnB_{tors}=0$. To prove that $B_{red}$ is free and of finite rank, consider the commutative diagram below
\begin{equation}\label{eqn:BtoCDiagram}
\xymatrix{
0\ar[r] & B_{tors} \ar[r] \ar[d]                   & B \ar[r]	\ar[d]				& B_{red} \ar[d] \ar[r] & 0 \\    
0\ar[r] & C_{tors} \ar[r] 						  & C \ar[r]						& \Z^k\simeq C_{red}\ar[r] & 0\\	
}
\end{equation}
and apply the snake lemma to obtain the exact sequence below,
\begin{equation*}
0\rightarrow A_{tors} \rightarrow A \rightarrow \textup{ker}(B_{red}\rightarrow \Z^k)\rightarrow \textup{cok}(B_{tors}\rightarrow C_{tors})\rightarrow 0
\end{equation*}
Recall $A/A_{tors}\simeq \Z^j$, so from the previous exact sequence we obtain the short exact sequence below.
\begin{equation}\label{ses:BRed}
0 \rightarrow \Z^j \rightarrow \textup{ker}(B_{red}\rightarrow \Z^k) \rightarrow \textup{cok}(B_{tors}\rightarrow C_{tors}) \rightarrow 0
\end{equation} 
Multiplying by $n$ we obtain the commutative diagram below.
\begin{equation*}
\xymatrix{
0 \ar[r] &\Z^j \ar[r] \ar^{n}[d] & \textup{ker}(B_{red}\rightarrow \Z^k) \ar[r] \ar^{n}[d] & \textup{cok}(B_{tors}\rightarrow C_{tors}) \ar^{n}[d] \ar[r] & 0\\    
0 \ar[r] &\Z^j \ar[r] 						  & \textup{ker}(B_{red}\rightarrow \Z^k) \ar[r]						& \textup{cok}(B_{tors}\rightarrow C_{tors}) \ar[r] & 0\\	
}
\end{equation*}
From $n\C_{tors}=0$ we have $n\textup{cok}(B_{tors}\rightarrow C_{tors})=0$, so when we apply the snake lemma we obtain the injection 
\begin{equation*}
0\rightarrow \textup{cok}(B_{tors}\rightarrow C_{tors}) \rightarrow (\Z/n)^j 
\end{equation*}
so $\textup{cok}(B_{tors}\rightarrow C_{tors})$ is finite. Then, from short exact sequence \eqref{ses:BRed}, we conclude that the torsion free group $B_{red}$ is finitely generated, hence is free and of finite rank.

In the other direction, suppose that $B$ is in $\mathcal{C}$. Then, $A_{tors}$ has bounded order since $A_{tors}$ injects into $B_{tors}$, and $A_{red}$ is free and of finite rank since $A_{red}$ injects into $B_{red}$ and every subgroup of a free abelian group is free. Furthermore, as $B$ surjects onto $C$, it follows that $B_{red}\simeq \Z^m$ surjects onto $C_{red}$, hence $C_{red}$ finitely generated and torsion free. As such, $C_{red}$ is free and of finite rank.\\ To prove that $C_{tors}$ is of bounded order, apply the snake lemma to Diagram \eqref{eqn:BtoCDiagram} to obtain that the cokernel of $B_{tors}\rightarrow C_{tors}$ is finite because the finitely generated group $ker(B_{red}\rightarrow \Z^k)$ surjects onto it, hence $C_{tors}$ lives in the middle of a short exact sequence with two groups that are both torsion of bounded order. As discussed earlier, it follows that $C_{tors}$ is torsion of bounded order. This completes the proof.
\end{proof}  

\begin{theorem}\label{thm:E}
Let $F$ be a field of characteristic different from 2. Let $X$ be an excellent, integral, noetherian, regular, separated $F$-scheme. Assume that the real cohomology groups $H^p(X_r,\Z)$ are finitely generated for all $p\geq 0$ and that $\vcd(X)$ is finite, say $\vcd(X)\hspace{-1mm}=\hspace{-1mm}s$. Then the cohomology groups $H^p_{Zar}(X,\W)$ are in the Serre subcategory of Lemma \ref{lem:Serre}.
\end{theorem}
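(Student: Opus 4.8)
The strategy is a d\'evissage along the filtration $\W = \I^0 \supseteq \I^1 \supseteq \I^2 \supseteq \cdots$ of the Witt sheaf by the powers of the fundamental ideal. By the Milnor conjecture, applied to the residue fields occurring in the Gersten resolutions of the sheaves $\I^j$ and $\Hcal^j$ (as in the proof of Theorem \ref{thm:B}(ii)), there is for every integer $j \geq 0$ a short exact sequence of Zariski sheaves on $X$
\begin{equation*}
0 \to \I^{j+1} \to \I^{j} \to \Hcal^{j} \to 0 .
\end{equation*}
The crucial elementary observation is that $H^{p}_{Zar}(X, \Hcal^{j})$ always lies in the Serre subcategory $\C$ of Lemma \ref{lem:Serre}: since $\Hcal^{j}$ is a sheaf of $\Z/2\Z$-vector spaces, the group $V := H^{p}_{Zar}(X, \Hcal^{j})$ is a $\Z/2\Z$-vector space, and every such $V$ --- even of infinite dimension --- lies in $\C$, because $2V = 0$ (condition (i) of Lemma \ref{lem:Serre}) and $V/V_{tors} = 0$ is free of rank zero (condition (ii)). It is equally immediate that every finitely generated abelian group lies in $\C$.

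I would then prove, by downward induction on $j$, that $H^{p}_{Zar}(X, \I^{j}) \in \C$ for all $p \geq 0$ and all $j \geq 0$. For the base case, set $s := \vcd(X)$; by Theorem \ref{thm:B}(iii) one has, for every $j \geq s+1$ and all $p$, isomorphisms $H^{p}_{Zar}(X, \I^{j}) \simeq H^{p}(X_r, 2^{j}\Z) \simeq H^{p}(X_r, \Z)$, and these groups are finitely generated by hypothesis, hence lie in $\C$. For the inductive step, assume $H^{p}_{Zar}(X, \I^{j+1}) \in \C$ for all $p$; the long exact cohomology sequence of the short exact sequence of sheaves above presents $H^{p}_{Zar}(X, \I^{j})$ as an extension
\begin{equation*}
0 \to \textup{im}\bigl(H^{p}_{Zar}(X, \I^{j+1}) \to H^{p}_{Zar}(X, \I^{j})\bigr) \to H^{p}_{Zar}(X, \I^{j}) \to \textup{im}\bigl(H^{p}_{Zar}(X, \I^{j}) \to H^{p}_{Zar}(X, \Hcal^{j})\bigr) \to 0 .
\end{equation*}
Here the left-hand term is a quotient of $H^{p}_{Zar}(X, \I^{j+1}) \in \C$ and the right-hand term is a subgroup of $H^{p}_{Zar}(X, \Hcal^{j}) \in \C$; since a Serre subcategory is closed under subgroups, quotients, and extensions, $H^{p}_{Zar}(X, \I^{j}) \in \C$. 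The induction thus runs from $j = s+1$ down to $j = 0$.

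Taking $j = 0$ yields $H^{p}_{Zar}(X, \W) = H^{p}_{Zar}(X, \I^{0}) \in \C$ for all $p$, which is the theorem. The main point requiring care --- and essentially the only input beyond Theorem \ref{thm:B} and Lemma \ref{lem:Serre} --- is the availability of the short exact sequence of sheaves $0 \to \I^{j+1} \to \I^{j} \to \Hcal^{j} \to 0$ for \emph{all} $j \geq 0$, and not merely for $j \geq s+1$ as stated literally in Theorem \ref{thm:B}(ii). I would settle this by checking it on stalks at each point $x \in X$, where it becomes the Milnor conjecture $I^{j+1}(\mathcal{O}_{X,x}) \hookrightarrow I^{j}(\mathcal{O}_{X,x}) \twoheadrightarrow H^{j}_{\acute{e}t}(\mathcal{O}_{X,x}, \Z/2\Z)$ for the regular local rings $\mathcal{O}_{X,x}$, using the Gersten conjecture for the Witt groups, for Milnor $K$-theory modulo $2$, and for \'etale cohomology modulo $2$ --- all known in the present setting and already used elsewhere in the article. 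Granting this, the remainder of the proof is a formal d\'evissage inside the Serre subcategory $\C$.
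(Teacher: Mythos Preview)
Your proof is correct, but it takes a longer route than the paper's. The paper does not d\'evissage through the successive quotients $\I^{j}/\I^{j+1}\simeq\Hcal^{j}$; instead it uses a single short exact sequence
\[
0\to\I^{j}\to\W\to\W/\I^{j}\to 0
\]
for one fixed $j\geq s+1$, observes that $2^{j}\cdot(\W/\I^{j})=0$ (because $2=\langle 1,1\rangle\in I(K)$, so $2^{j}W(K)\subset I^{j}(K)$, an elementary fact not requiring the Milnor conjecture), and concludes directly that $H^{p}_{Zar}(X,\W)$ sits in an extension of a finitely generated group by a $2^{j}$-torsion group. Your downward induction via the Milnor sequences $0\to\I^{j+1}\to\I^{j}\to\Hcal^{j}\to 0$ achieves the same conclusion, but at the cost of needing these sequences for \emph{all} $j\geq 0$, which---as you correctly flag---requires additional justification beyond Theorem~\ref{thm:B}(ii). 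The paper's approach sidesteps this entirely: the only nontrivial input is Theorem~\ref{thm:B}(iii) at a single large $j$, and the rest is the elementary bound on the exponent of $\W/\I^{j}$.
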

\begin{proof}
Let $\vcd(X)\hspace{-1mm}=\hspace{-1mm}s$ and let $\C$ denote the Serre subcategory of Lemma \ref{lem:Serre}. For every $j\geq 1$ there is a long exact sequence in cohomology
\begin{equation*}
\cdots \rightarrow H^i_{Zar}(X,\I^j)\rightarrow H^i_{Zar}(X,\W)\rightarrow H^i_{Zar}(X,\W/\I^j))\rightarrow \cdots 
\end{equation*}
For $j\geq 1$, the map $\W/\I^j\stackrel{2^{j}}{\rightarrow}\W/\I^j$ is zero, so the map it induces in cohomology is zero. Therefore the groups $H^i_{Zar}(X,\W/\I^j))$ are torsion, with $2^jH^i_{Zar}(X,\W/\I^j))=0$, and hence belong to $\C$. For $j\geq s+1$, using Theorem \ref{thm:B} (iii) and the hypothesis on finite generation of real cohomology, we have that the groups $H^i_{Zar}(X,\I^j)$ are finitely generated. Therefore, when $j\geq s+1$, both $H^i_{Zar}(X,\I^j)$ and $H^i_{Zar}(X,\W/\I^j))$ belong to $\C$, hence so does $H^i_{Zar}(X,\W)$.
\end{proof}

\subsection{Proof of Theorem \ref{thm:AppB}}
Let $F$ be a field of characteristic different from 2. Let $X$ be an excellent, integral, noetherian, regular, separated $F$-scheme having finite Krull dimension. Assume that the real cohomology groups $H^p(X_r,\Z)$ are finitely generated for all $p\geq 0$ and that $\vcd(X)$ is finite. Recall that if a spectral sequence converges strongly and the groups on the $\text{E}_2$-page lie in a Serre subcategory, then the $\text{E}_{\infty}$-terms, and hence the abutment, lie in the same subcategory. From Lemma \ref{lem:GerstenComplexZariskiCoh}, we have that the entries on the $\text{E}_2$-page of the coniveau spectral sequence are either 0 or equal to $H^p_{Zar}(X,\W)$ for some $p\geq 0$. By Theorem \ref{thm:E}, the groups $H^p_{Zar}(X,\W)$ lie in the Serre subcategory of Lemma \ref{lem:Serre}. Therefore, the abutment $W^i(X)$ does as well. This finishes the proof of Theorem \ref{thm:AppB}.

\bibliographystyle{spmpsci}      
\bibliography{GlobalSignature}   

\end{document}